\documentclass[a4paper]{article}

%% Language and font encodings
\usepackage[english]{babel}
\usepackage[utf8x]{inputenc}
\usepackage[T1]{fontenc}
\newcommand{\nm}{\noalign{\smallskip}}
\newcommand{\ds}{\displaystyle}
\usepackage{graphicx}
\usepackage{caption, subcaption}
\usepackage[title]{appendix}
\usepackage{tikz}
\usepackage{pgfplots}
\usepackage{cutwin}
\graphicspath{ {MATLAB/} }
\usetikzlibrary{intersections,decorations.pathreplacing,decorations.markings,calc,angles,quotes,arrows.meta}
%% Sets page size and margins
\usepackage[a4paper,top=2.9cm,bottom=2cm,left=2.5cm,right=2.5cm,marginparwidth=1.75cm]{geometry}

%% Useful packages
\usepackage{amsmath}
\usepackage{amsthm}
\usepackage{amssymb}
\usepackage{hyperref}
\usepackage{cleveref}
\usepackage{epstopdf}
\makeatletter
\newcommand{\neutralize}[1]{\expandafter\let\csname c@#1\endcsname\count@}
\makeatother
\usepackage{comment}
\usepackage{todonotes}

\newtheorem{thm}{Theorem}% 	 	[section]
\newtheorem{lem}[thm]{Lemma}%[section]
\newtheorem{prop}[thm]{Proposition}%[section]
\theoremstyle{definition}
\newtheorem{rmk}[thm]{Remark}%[section]
\numberwithin{equation}{section}
\numberwithin{thm}{section}
\usepackage{graphicx}
\usepackage{caption, color}
\usepackage{float}
\captionsetup{font={small,it}}
\theoremstyle{definition}

\newcommand{\Z}{\mathbb{Z}}
\newcommand{\N}{\mathbb{N}}
\newcommand{\R}{\mathbb{R}}
\newcommand{\Ro}{\mathcal{R}}
\newcommand{\C}{\mathcal{C}}
\newcommand{\A}{\mathcal{A}}
\newcommand{\B}{\mathcal{B}}
\newcommand{\F}{\mathcal{F}}
\renewcommand{\H}{\mathcal{H}}
\newcommand{\p}{\partial}
\renewcommand{\L}{\mathcal{L}}
\renewcommand{\S}{\mathcal{S}}

\newcommand{\K}{\mathcal{K}}

\renewcommand{\epsilon}{\varepsilon}
\newcommand{\dx}{\: \mathrm{d}}

\renewcommand{\b}[1]{\textbf{#1}}
\newcommand{\eqnref}[1]{(\ref {#1})}
\def\nm{\noalign{\medskip}}

\newcommand{\ie}{\textit{i.e.}}
\newcommand{\iu}{\mathrm{i}\mkern1mu}

\title{Topologically protected edge modes in one-dimensional chains of subwavelength resonators}

\author{
	Habib Ammari\thanks{\footnotesize Department of Mathematics, 
		ETH Z\"urich, 
		R\"amistrasse 101, CH-8092 Z\"urich, Switzerland (habib.ammari@math.ethz.ch, bryn.davies@sam.math.ethz.ch, erik.orvehed.hiltunen@sam.math.ethz.ch).}\and Bryn Davies\footnotemark[1]  \and Erik Orvehed Hiltunen\footnotemark[1]  \and Sanghyeon Yu\thanks{\footnotesize Department of Mathematics, Korea University, Seoul 02841, S. Korea (sanghyeon\_yu@korea.ac.kr)}}

\date{}

\begin{document}
	\maketitle
	
	\begin{abstract}
		The goal of this paper is to advance the development of wave-guiding subwavelength crystals by developing designs whose properties are stable with respect to imperfections in their construction. In particular, we make use of a locally resonant subwavelength structure, composed of a chain of high-contrast resonators, to trap waves at deep subwavelength scales. We first study an infinite chain of subwavelength resonator dimers and define topological quantities that capture the structure's wave transmission properties. Using this for guidance, we design a finite crystal that is shown to have wave localization properties, at subwavelength scales, that are robust with respect to random imperfections.
	\end{abstract}
\vspace{0.5cm}
	\noindent{\textbf{Mathematics Subject Classification (MSC2000):} 35J05, 35C20, 35P20.
		
\vspace{0.2cm}

	\noindent{\textbf{Keywords:}} subwavelength resonance, subwavelength phononic and photonic crystals, topological nanomaterials, edge states.
\vspace{0.5cm}	
%	\def\keywords2{\vspace{.5em}{\textbf{  Mathematics Subject Classification
%				(MSC2000).}~\,\relax}}
%	\def\endkeywords2{\par}
%	\keywords2{35R30, 35C20.}
%	
%	\def\keywords{\vspace{.5em}{\textbf{ Keywords.}~\,\relax}}
%	\def\endkeywords{\par}
%	\keywords{subwavelength resonance, high contrast, subwavelength phononic crystal, topological insulator, edge mode.}
	
	\section{Introduction}
	The ability to manipulate and guide the propagation of waves on subwavelength scales is important for many different physical applications. In the contexts of nanophotonics and nanophononics, subwavelength crystalline structures have, in particular, been shown to have desirable properties. Here, \textit{subwavelength} means that the size of the repeating unit cell is several magnitudes smaller than the incident wavelengths. It was recently shown, for example, that one can design subwavelength crystals with low ranges of frequencies that cannot propagate (known as \emph{subwavelength band gaps}) \cite{bandgap} and can localize (or trap) specific frequencies at subwavelength scales by introducing a defect \cite{defectSIAM}. However, one limitation of such designs is that their properties are often very sensitive to imperfections in the crystal's structure. In order to be able to feasibly manufacture wave-guiding devices, it is important that we are able to design subwavelength crystals that exhibit stability with respect to geometric errors.
	
	We take inspiration from quantum mechanics. So-called \emph{topological insulators} have been extensively studied for their electronic properties, in the setting of the Schrödinger operator \cite{drouot2, drouot1,fefferman,fefferman2,fefferman3,fefferman4, lee-thorp}. The principle that underpins the design of these structures is that one is able to define topological invariants which capture the crystal's wave propagation properties. Then, if part of a crystalline structure is replaced with an arrangement that is associated with a different value of this invariant, not only will certain frequencies be localized to the interface (as predicted by the classical theory for crystals with defects) but this behaviour will be stable with respect to imperfections. These eigenmodes are known as \emph{edge modes} and we say that they are \emph{topologically protected} to refer to their robustness.
	
	One of the most classical examples from quantum mechanics is the well-known Su-Schrieffer-Heeger (SSH) model \cite{SSH}. Originally introduced to study the electrical properties of polyacetylene, the SSH model consists of a chain of atoms arranged as dimers (similar to that depicted in \Cref{fig:SSH}). In the case of one-dimensional crystals such as this, the natural choice of topological invariant is the Zak phase \cite{zak}. Qualitatively, a non-zero Zak phase means that the crystal has undergone \emph{band inversion}, meaning that at some point in the Brillouin zone the monopole/dipole nature of the first/second Bloch eigenmodes has swapped. In this way, the Zak phase captures the crystal's wave propagation properties. The Zak phased was measured experimentally by \cite{zak_experiment}, and in the SSH model this can take two discrete values depending on whether the atoms in each dimer are closer to each other than they are to the next dimer in the chain. In higher dimensional crystals, topological indices are similarly dependent on the symmetry of the crystals \cite{top10}. If one takes two SSH chains with different Zak phases and joins half of one chain to half of the other to form a new crystal, this crystal will exhibit a topologically protected edge mode at the interface. This principle is known as the \emph{bulk-boundary correspondence} in quantum settings \cite{drouot2, drouot4,graf2013bulk2d,graf2018bulk,graf2018bulk2d,bulkbdy,prodan}. Here, the term \emph{bulk} is used to refer to parts of a crystal that are away from an edge (and so resemble an infinite, defect-free crystalline structure).
	
	Understanding \emph{why} topologically protected edge modes are stable to local perturbations is subtle, and doing so precisely is very much an open question. It can be argued that, due to (chiral) symmetry, these crystals not only have band gaps but the frequencies associated with edge modes occur in the centre of the band gap. We call them \textit{midgap} frequencies. This is in sharp contrast to conventional, unprotected defect frequencies, which typically emerge from the edge of the band gap \cite{defectSIAM}. As a result, a small imperfection in the subwavelength crystal will not be able to move a topologically protected frequency out of the band gap, while an unprotected frequency is often lost amongst the bulk frequencies. Moreover, if the perturbation preserves the crystal's symmetry, the frequency of the edge mode will be very stable, experiencing much smaller variations compared to the other subwavelength resonant frequencies. These effects are typically explained as a consequence of the different topological properties on either side of the edge (see, for example, \cite{top_review} for a review of topological phases in acoustic systems).
	
	Subwavelength topological photonic and phononic crystals, based on locally resonant crystalline structures with large material contrasts, have been studied both numerically and experimentally in \cite{top_subwavelength,top_subwavelength2,Yves2,Yves1,Yves3}. Subwavelength crystals allow for the manipulation and localization of waves on very small spatial scales and are therefore very useful in physical applications, especially situations where the operating wavelengths are very large. Recently, topological properties of acoustic waves in SSH chains and honeycomb lattices of subwavelength resonators have been numerically and experimentally explored \cite{add2,add1,add4,add3}. In this work, we study a subwavelength crystal exhibiting a topologically non-trivial band gap. The crystal consists of a chain of subwavelength resonators arranged as dimers, similar to the SSH model (see \Cref{fig:SSH,,fig:finite}). Wave propagation in the resonant structure is modelled by a high-contrast Helmholtz problem. High material contrasts are an essential prerequisite for the existence of resonant behaviours on subwavelength scales \cite{first, minnaert}. Such problems arise naturally in the context of both nanophotonic and nanophononic structures \cite{brynCochlea, first, MaCMiPaP}. Around this frequency, a single resonator in free-space scatters waves with a greatly enhanced amplitude. If a second resonator is introduced, coupling interactions will occur giving a system that has both monopole and dipole resonance modes \cite{doublenegative}. This pattern continues for larger systems \cite{brynCochlea}. 
	
	Initially, in \Cref{sec:inf_chain}, we set out to study the bulk properties of an infinitely periodic chain of subwavelength resonator dimers. Using Floquet-Bloch theory, we are able to analytically derive the resonant frequencies and associated eigenmodes of this crystal, and further prove that there exists a non-trivial band gap. Motivated by the use of the Zak phase in quantum mechanics, as well as the work of \cite{top_acoustic_SSH, top_review, pocock2018, zhao2018} in photonics and phononics, we define a topological invariant which we will also refer to as the Zak phase. We prove that the Zak phase takes different values for different geometries and in the \emph{dilute regime} (that is, when the distance between the resonators is an order of magnitude greater than their size) we give explicit expressions for its value. Guided by this knowledge of how the infinite (bulk) chains behave, in \Cref{sec:finite_crystals} we design a finite chain of resonator dimers that has a topologically protected edge mode. This configuration takes inspiration from the bulk-boundary correspondence in the SSH model by introducing an interface, on either side of which the resonator dimers can be associated with different Zak phases thus creating a topologically protected edge mode.
	
	In the quantum mechanical SSH model, the standard approach is to consider the \emph{tight-binding approximation}. In this set-up, the Hamiltonian corresponding to the continuous differential problem is simplified by assuming that each particle only interacts with the surrounding crystal in a limited way that is easy to describe. This simplification gives a discrete approximation to the problem. Often, this is combined with a \emph{nearest-neighbour approximation}, where long-range interactions are neglected, enabling explicit computations of the band structure. In this work, we prove that our system can be approximated by a discrete system, which captures all the interactions in full and has rigorous error estimates. In the dilute regime, we quantify the decay of the interactions and conclude that non-negligible interactions occur also for resonators separated by several unit cells. Since the edge modes are protected due to \emph{chiral symmetry}, which is only  present here under the nearest-neighbour approximation, we expect the midgap frequencies to be \emph{approximately} stable with respect to errors which preserve this symmetry.
	
%	\todo[inline]{Old paragraph:
%	In the quantum mechanical SSH model, the standard approach is to consider the \emph{tight-binding approximation}. In this set-up, long-range interactions are neglected and only interactions between neighbouring atoms are considered.  Here, in the dilute regime, we prove that our chain can be approximated by a tight-binding approach, thus giving the existence of a midgap frequency. Further, we are able to explain the importance of \emph{chiral symmetry} and why topologically protected edge modes will be robust to errors which preserve this symmetry. }
	
	Finally, we conduct a fully-coupled numerical study of our finite chain of resonator dimers. This is based on an approach similar to that developed in \cite{brynCochlea}. We show that the crystal can exhibit topologically protected subwavelength edge modes in both the dilute and non-dilute regimes. Moreover, we study the stability of the midgap frequencies with respect to symmetry-preserving geometric errors. We show that, while the midgap frequency experiences variations (which is not the case under the nearest-neighbour approximation), these are much smaller than those seen by the band frequencies and the edge mode remains localized in the middle of the band gap even for very large geometric errors. We also make the comparison with a classical, unprotected, defect mode, similar to that studied in \cite{defectSIAM}. We show that the new subwavelength crystal exhibits a mode with a similar degree of localization but with greatly improved stability with respect to errors.

 	\section{Preliminaries}
 	In this section, we briefly review the layer potential operators and Floquet-Bloch theory that will be used in the subsequent analysis. More details on this material can be found in, for example, \cite{MaCMiPaP}. We also briefly review topological properties of the band structure.
 	
 	\subsection{Layer potential techniques} \label{sec:layerpot}
 	Let $D\in \R^3$ be a bounded, multiply-connected domain with $N$ simply-connected components $D_i$. Further, suppose that there exists some $0<s<1$ so that $\p D_i$ is of class $C^{1,s}$ for each $i=1,\ldots,N$. Let $G^0$ and $G^k$  be the Laplace and (outgoing) Helmholtz Green's functions, respectively, defined by
 	$$
 	G^k(x,y) := -\frac{e^{\iu k|x-y|}}{4\pi|x-y|}, \quad x,y \in \R^3, k\geq 0.
 	$$
 	
 	We introduce the single layer potential $\S_{D}^k: L^2(\partial D) \rightarrow H_{\textrm{loc}}^1(\R^3)$, defined by
 	\begin{equation*}
 	\S_D^k[\phi](x) := \int_{\partial D} G^k(x,y)\phi(y) \dx \sigma(y), \quad x \in \R^3.
 	\end{equation*}
 	Here, the space $H_{\textrm{loc}}^1(\R^3)$ consists of functions that are square integrable on every compact subset of $\R^3$ and have a weak first derivative that is also square integrable. It is well-known that $\S_D^0: L^2(\p D) \rightarrow H^1(\p D)$ is invertible, where $H^1(\p D)$ is the set of functions that are square integrable on $\p D$ and have a weak first derivative that is also square integrable.
 	
 	We also define the Neumann-Poincar\'e operator $\K_D^{k,*}: L^2(\partial D) \rightarrow L^2(\partial D)$ by
 	\begin{equation*}
 	\K_D^{k,*}[\phi](x) := \int_{\partial D} \frac{\partial }{\partial \nu_x}G^k(x,y) \phi(y) \dx \sigma(y), \quad x \in \partial D,
 	\end{equation*}
 	where $\partial/\partial \nu_x$ denotes the outward normal derivative at $x\in\p D$.
 	
 	The following relations, often known as \emph{jump relations}, describe the behaviour of $\S_D^k$ on the boundary $\partial D$ (see, for example, \cite{MaCMiPaP}):
 	\begin{equation}
 	\S_D^k[\phi]\big|_+ = \S_D^k[\phi]\big|_-,
 	\end{equation}
 	and
 	\begin{equation}
 	\frac{\partial }{\partial \nu}\S_D^k[\phi]\Big|_{\pm}  =  \left(\pm\frac{1}{2} I + \K_D^{k,*}\right) [\phi],
 	\end{equation}
 	where $|_\pm$ denote the limits from outside and inside $D$.

% 	 $\S_D^k: L^2(\p D) \rightarrow H^1(\p D)$ is invertible when $k^2$ is not a Dirichlet eigenvalue of $D$. 
%	Moreover, the following lemma characterizes the kernel of the operator $-\frac{1}{2} + \K_D^0$ \cite{arma2, MaCMiPaP}.
 	
% 	\begin{lem}\label{lem:kernel}
% 		Let $N$ be the number of connected components of $D$. The functions $\psi_i, i=1,...,N$, defined as 
% 		$$\psi_i := (\S_D^0)^{-1}[\chi_{\p D_i}],$$
% 		where $\chi_{\p D_i}$ is the indicator function of $\p D_i$, is a basis for the kernel 
% 		$$\ker\left(-\frac{1}{2}I + \K_D^0\right).$$
% 		In particular, the kernel has dimension $N$.
% 	\end{lem}

 	\subsection{Floquet-Bloch theory and quasiperiodic layer potentials}\label{sec:floquet}
 	A function $f(x)\in L^2(\R)$ is said to be $\alpha$-quasiperiodic, with quasiperiodicity $\alpha\in\mathbb{R}$, if $e^{-\iu \alpha x}f(x)$ is periodic. If the period is $L>0$, the natural space for $\alpha$ is $Y^*:= \R / \tfrac{2\pi}{L} \Z \simeq (-\pi/L, \pi/L]$. $Y^*$ is known as the \textit{first Brillouin zone}. Given a function $f\in L^2(\R)$, the Floquet transform is defined as
 	\begin{equation}\label{eq:floquet}
 	\F[f](x,\alpha) := \sum_{m\in \Z} f(x-Lm) e^{\iu L\alpha m}.
 	\end{equation}
 	$\F[f]$ is always $\alpha$-quasiperiodic in $x$ and periodic in $\alpha$. Let $Y_0 = [-L/2,L/2)$ be the unit cell. The Floquet transform is an invertible map $\F:L^2(\R) \rightarrow L^2(Y_0\times Y^*)$, with inverse (see, for instance, \cite{MaCMiPaP, kuchment})
 	\begin{equation*}
 	\F^{-1}[g](x) = \frac{1}{2\pi}\int_{Y^*} g(x,\alpha) \dx \alpha, \quad x\in \R,
 	\end{equation*}
 	where $g(x,\alpha)$ is the quasiperiodic extension of $g$ for $x$ outside of the unit cell $Y_0$.
 	
 	We will consider a three-dimensional problem which is periodic in one dimension. Define the unit cell $Y$ as $Y := Y_0\times \R^2$. We define the quasiperiodic Green's function $G^{\alpha,k}(x,y)$ as the Floquet transform of $G^k(x,y)$ in the first dimension of $x$, \ie{},
 	$$G^{\alpha,k}(x,y) := -\sum_{m \in \Z} \frac{e^{\iu k|x-y-(Lm,0,0)|}}{4\pi|x-y-(Lm,0,0)|}e^{\iu \alpha Lm}.$$	
 	
 	Let $D$ be as in the previous layer potential definitions, but assume additionally $D\Subset Y$. We define the quasiperiodic single layer potential $\mathcal{S}_D^{\alpha,k}$ by
 	$$\mathcal{S}_D^{\alpha,k}[\phi](x) := \int_{\partial D} G^{\alpha,k} (x,y) \phi(y) \dx\sigma(y),\quad x\in \mathbb{R}^3.$$
 	It is known that $\mathcal{S}_D^{\alpha,0} : L^2(\p D) \rightarrow H^1(\p D)$ is invertible if $\alpha \neq  0$ \cite{MaCMiPaP}.
 	It satisfies the jump relations
 	\begin{equation} \label{eq:jump1}
 	\S_D^{\alpha,k}[\phi]\big|_+ = \S_D^{\alpha,k}[\phi]\big|_-,
 	\end{equation}
 	and
	\begin{equation} \label{eq:jump2}
 	\frac{\p}{\p\nu} \mathcal{S}_D^{\alpha,k}[\phi] \Big|_{\pm} = \left( \pm \frac{1}{2} I +( \mathcal{K}_D^{-\alpha,k} )^*\right)[\phi]\quad \mbox{on}~ \p D,
 	\end{equation}
 	where $(\mathcal{K}_D^{-\alpha,k})^*$ is the quasiperiodic Neumann-Poincaré operator, given by
 	$$ (\mathcal{K}_D^{-\alpha, k} )^*[\phi](x):= \int_{\p D} \frac{\p}{\p\nu_x} G^{\alpha,k}(x,y) \phi(y) \dx\sigma(y).$$
 	
% 	The quasiperiodic single layer potential $\mathcal{S}_D^{\alpha,0} : L^2(\p D) \rightarrow H^1(\p D)$ is invertible if $\alpha \neq  0$ \cite{MaCMiPaP}. %Similarly, a result analogous to \Cref{lem:kernel} holds in the quasiperiodic case.
 	
% 	\begin{lem} \label{lem:kernelq}
% 		Let $N$ be the number of connected components of $D$ and $\alpha \neq 0$. The functions $\psi_i^\alpha, i=1,...,N$, defined as 
% 		$$\psi_i^\alpha := (\S_D^{\alpha,0})^{-1}[\chi_{\p D_i}],$$
% 		is a basis for the kernel 
% 		$$\ker\left(-\frac{1}{2}I + (\mathcal{K}_D^{-\alpha,0} )^*\right).$$
% 		In particular, the kernel has dimension $N$.
% 	\end{lem}
 	
	\subsection{Band structure and topological properties} \label{sec:top}
	In this section we briefly review the topological nature of the Bloch eigenbundle. For further details, and discussions of the topological quantities involved, we refer to \cite{shortcourse,phases,kanereview}. Let $\mathcal{L}$ be a linear elliptic differential operator which is self-adjoint in $L^2(\R^3)$ and whose coefficients are periodic in one dimension. Denote by $\mathcal{L}(\alpha)$ the operator with the same differential expression but acting on $\alpha$-quasiperiodic functions. It is well-known \cite{kuchment2} that the spectrum $\sigma(\mathcal{L})$ of the original operator can be expressed in terms of the spectra $\sigma(\mathcal{L}(\alpha))$ as
	$$\sigma(\mathcal{L}) = \bigcup_{\alpha\in Y^*} \sigma(\mathcal{L}(\alpha)).$$
	This describes a \textit{band structure} of the spectrum of $\mathcal{L}$: for each $\alpha$ the spectrum $\sigma(\mathcal{L}(\alpha))$ is known to be discrete and will thus trace out bands $\sigma_n(\mathcal{L}(\alpha)), n = 1,2,\dots$, as $\alpha$ varies. The spectrum of $\mathcal{L}$ is said to have a \textit{band gap} if, for some $n$, $\max_\alpha\sigma_n(\mathcal{L}(\alpha)) < \min_\alpha \sigma_{n+1}(\mathcal{L}(\alpha))$. A band is said to be \textit{non-degenerate} if it does not intersect any other band.
	
	On a non-degenerate band, indexed by $n=1,2,\dots$, there exists a family of associated Bloch eigenmodes $\{u_n^\alpha\}_{\alpha\in Y^*}$ which we define so that they are both normalized and depend continuously on $\alpha$. Observe that the base space $Y^*$ has the topology of a circle. A natural question to ask, when considering the topological properties of a crystal, is whether properties are preserved after parallel transport around $Y^*$. In particular, a powerful quantity to study is the \textit{Berry-Simon connection} $A_n$, defined as
	$$A_n := \iu \langle u_n^\alpha, \frac{\p}{\p \alpha} u_n^\alpha \rangle.$$
	For any $\alpha_1,\alpha_2\in Y^*$, the parallel transport from $\alpha_1$ to $\alpha_2$ is $u_n^{\alpha_1}\mapsto e^{\iu \theta}u_n^{\alpha_2}$, where $\theta$ is given by
	\begin{equation*}
	\theta = \int_{\alpha_1}^{\alpha_2} A_n \dx \alpha.
	\end{equation*}
	Thus, it is enlightening to introduce the so-called \textit{Zak phase}, $\varphi_n^z$, defined as
	$$\varphi_n^{z} := \iu \int_{Y^*} \big\langle u_n^\alpha, \frac{\p }{\p \alpha} u_n^\alpha \big\rangle \dx \alpha,$$
	which corresponds to parallel transport around the whole of $Y^*$. When $\varphi_n^z$ takes a value that is not a multiple of $2\pi$, we see that the eigenmode has gained a non-zero phase after parallel transport around the circular domain $Y^*$. In this way, the Zak phase captures topological properties of the crystal. For crystals with inversion symmetry, the Zak phase is known to only attain the values $0$ or $\pi$ \cite{Zak_acoustic, zak}.
	
	\begin{rmk}
		In quantum mechanical contexts, $\langle \cdot, \cdot \rangle$ is typically chosen as the $L^2(Y)$-inner product. When working with Helmholtz scattering problems, however, this choice is not appropriate since the solutions are not elements of $L^2(Y)$. Instead, we will work with the $L^2(D)$-inner product. We will see that the behaviour on $D$ is enough to characterize non-trivial topological behaviour and capture the structure's wave propagation properties.
	\end{rmk}

	\section{Infinite, periodic chains of subwavelength resonator dimers} \label{sec:inf_chain}
	In this section, we study a periodic arrangement of subwavelength resonator dimers. This is an analogue of the SSH model. The goal is to derive a topological invariant which characterises the crystal's wave propagation properties and indicates when it supports topologically protected edge modes. The analysis here holds for a very general class of high-contrast resonator chains, requiring only two assumptions of geometric symmetry.
	
	\subsection{Problem description}
	Assume we have a one-dimensional crystal in $\R^3$ with repeating unit cell $Y := [-\frac{L}{2}, \frac{L}{2}]\times \R^2$. Each unit cell contains two resonators (often referred to as a dimer) surrounded by some background medium. Suppose the resonators together occupy the domain $D := D_1 \cup D_2$. As well as sufficient smoothness for the above layer potential operators to be well defined, we need two assumptions of symmetry for the analysis that follows. The first is that each individual resonator is symmetric in the sense that there exists some $x_1\in\mathbb{R}$ such that
	\begin{equation} \label{resonator_symmetry}
	R_1D_1 = D_1, \quad R_2D_2 = D_2,
	\end{equation}
	where $R_1$ and $R_2$ are the reflections in the planes $p_1=\{-x_1\}\times \R^2$ and $p_2=\{x_1\}\times \R^2$, respectively. We also assume that the dimer is symmetric in the sense that
	\begin{equation} \label{dimer_symmetry}
	D=-D.
	\end{equation}
%	 that $D_i$ has reflectional symmetry and that the pair has rotational symmetry (of order 2). More precisely, we assume 
%	$$R_1D_1 = D_1, \quad R_2D_2 = D_2, \quad -D = D,$$
	 Denote the full crystal by $\C$, that is,
	\begin{equation} \label{crystal_def}
	\C := \bigcup_{m\in \Z} \left(D + (mL,0,0)\right).
	\end{equation}
	We denote the separation of the resonators within each unit cell, along the first coordinate axis, by $d := 2x_1$ and the separation across the boundary of the unit cell by $d' := L - d$. %These parameters play the roles of the hopping parameters in the standard treatment of the SSH model for a two-level Hamiltonian.
	
	\begin{figure}[tbh]
		\centering
		\begin{tikzpicture}[scale=2]
		\begin{scope}
		\draw[dashed, opacity=0.5] (-0.5,0.85) -- (-0.5,-1);
		\draw[dashed, opacity=0.5]  (1.8,0.85) -- (1.8,-1)node[yshift=4pt,xshift=-7pt]{};
		\draw[{<[scale=1.5]}-{>[scale=1.5]}, opacity=0.5] (-0.5,-0.6) -- (1.8,-0.6)  node[pos=0.5, yshift=-7pt,]{$L$};
%		\begin{scope}
		\draw  plot [smooth cycle] coordinates {(-0.1,0.1) (0.2,0) (0.5,0.1) (0.3,-0.4) (0.1,-0.4)} node[xshift=-13pt, yshift=10pt]{$D_1$};
%		\end{scope}
%		\draw (0.2,-0.2) circle (8pt) node[yshift=-2pt, xshift=-8pt ]{$D_1$};
%		\begin{scope}[xshift = 1.3cm]
%		\draw (-0.2,0.2) coordinate (start2) circle (8pt) node[yshift=0pt, xshift=-9pt ]{$D_2$};
		\draw  plot [smooth cycle] coordinates {(0.8,-0.1) (1.1,0) (1.4,-0.1) (1.2,0.4) (1,0.4)} node[xshift=25pt, yshift=-9pt]{$D_2$};
%		\end{scope}
		\draw[{<[scale=1.5]}-{>[scale=1.5]}, opacity=0.5] (0.2,0.6) -- (1.1,0.6) node[pos=0.5, yshift=-5pt,]{$d$};
		\draw[dotted,opacity=0.5] (0.2,0.7) -- (0.2,-0.8) node[at end, yshift=-0.2cm]{$p_1$};
		\draw[dotted,opacity=0.5] (1.1,0.7) -- (1.1,-0.8) node[at end, yshift=-0.2cm]{$p_2$};
		\draw[{<[scale=1.5]}-{>[scale=1.5]}, opacity=0.5] (1.1,0.6) -- (2.5,0.6) node[pos=0.6, yshift=-5pt,]{$d'$};
		%\draw[dashed, opacity=0.5] (0.2,-0.2) -- (0.6,-0.2) coordinate (end);
		%\pic [draw, ->, "$\theta$", angle eccentricity=1.5] {angle = end--start1--start2};
		\end{scope}
		\begin{scope}[xshift=-2.3cm]
%		\draw (0.2,-0.2) circle (8pt);
		\draw  plot [smooth cycle] coordinates {(-0.1,0.1) (0.2,0) (0.5,0.1) (0.3,-0.4) (0.1,-0.4)};
		\draw  plot [smooth cycle] coordinates {(0.8,-0.1) (1.1,0) (1.4,-0.1) (1.2,0.4) (1,0.4)};
		\begin{scope}[xshift = 1.2cm]
%		\draw (-0.2,0.2) circle (8pt);
		\draw (-1.6,0) node{$\cdots$};
		\end{scope};
		\end{scope}
		\begin{scope}[xshift=2.3cm]
		\draw  plot [smooth cycle] coordinates {(-0.1,0.1) (0.2,0) (0.5,0.1) (0.3,-0.4) (0.1,-0.4)};
		\draw  plot [smooth cycle] coordinates {(0.8,-0.1) (1.1,0) (1.4,-0.1) (1.2,0.4) (1,0.4)};
%		\draw (0.2,-0.2) circle (8pt);		
		\draw[dotted] (0.2,0.7) -- (0.2,-0.8);
		\begin{scope}[xshift = 1.1cm]
%		\draw (-0.2,0.2) circle (8pt);
		\end{scope}
		\draw (1.7,0) node{$\cdots$};
		\end{scope}		
		\begin{scope}[yshift=0.9cm]
		\draw [decorate,opacity=0.5,decoration={brace,amplitude=10pt}]
		(-0.5,0) -- (1.8,0) node [black,midway]{};
		\node[opacity=0.5] at (0.67,0.35) {$Y$};	
		\end{scope}
		\end{tikzpicture}
		\caption{Example of a two-dimensional cross-section of a chain of subwavelength resonators satisfying the symmetry assumptions \eqref{resonator_symmetry}~and~\eqref{dimer_symmetry}. The repeating unit cell $Y$ contains the dimer $D_1 \cup D_2$.} \label{fig:SSH}
	\end{figure}
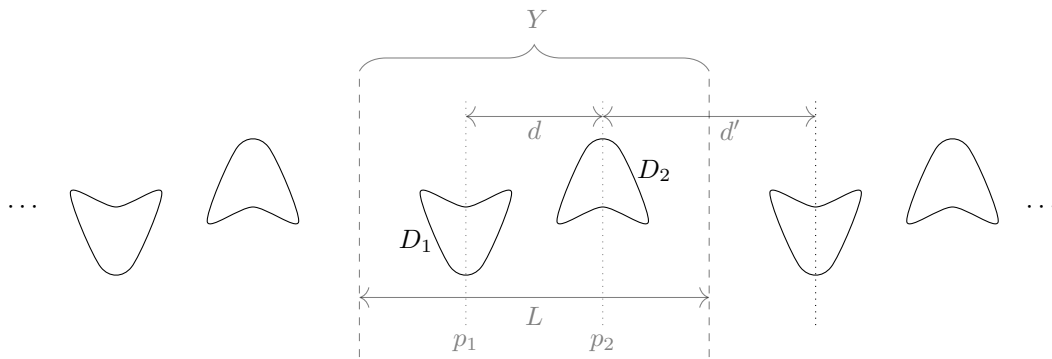	
	
	Wave propagation inside the infinite periodic structure is modelled by the Helmholtz problem
	\begin{equation} \label{eq:scattering}
	\left\{
	\begin{array} {ll}
	\ds \Delta {u}+ \omega^2 {u}  = 0 & \text{in } \R^3 \setminus \p \C, \\
	\nm
	\ds  {u}|_{+} -{u}|_{-}  =0  & \text{on } \partial \C, \\
	\nm
	\ds  \delta \frac{\partial {u}}{\partial \nu} \bigg|_{+} - \frac{\partial {u}}{\partial \nu} \bigg|_{-} =0 & \text{on } \partial \C, \\
	\nm
	\ds u(x_1,x_2,x_3) & \text{satisfies the outgoing radiation condition as } \sqrt{x_2^2+x_3^2} \rightarrow \infty.
	\end{array}
	\right.
	\end{equation}
	Here, $\omega$ is the frequency of the incident waves which is assumed to be small, such that we are in a subwavelength regime. We refer to \cite{MaCMiPaP} for the definition of the outgoing radiation condition. The material parameter $\delta$ represents the contrast between the resonators and the background.
	%\footnote{Here we are implicitly assuming that the wave speeds of the two materials coincide. This is a simplification and not a crucial assumption: the method can also be applied to the general case \cite{MaCMiPaP}.}. 
	In order for subwavelength resonant modes to exist, we assume that $\delta$ satisfies the high-contrast condition
	\begin{equation} \label{data2}
	\delta \ll 1.
	\end{equation}
	As an example, in the case of acoustic wave propagation, $\delta = \rho_r/\rho_0$ is the density contrast between the resonator material and the background material.
	
	Let $\sigma$ be the spectrum of the operator
	$$\mathcal{L} := -\frac{1}{\delta  + (1-{\delta}) \chi_{\C}}\nabla \cdot \left(\Big(\delta  + (1-{\delta}) \chi_{\C}\Big)\nabla\right),$$
	acting on functions which satisfy the outgoing radiation condition in $x_2, x_3$. Here, $\chi_{\C}$ denotes the indicator function of the periodic crystal $\C$. Only frequencies $\omega$ such that $\omega^2 \in \sigma$ can be solutions to \eqref{eq:scattering}. Any other frequencies are not able to propagate in the material. It is worth emphasizing that, due to radiation in $x_2$- and $x_3$-directions,
the resonant frequencies are complex with negative imaginary parts. Nevertheless, as we will see in Theorem \ref{thm:char_approx_infinite}, the resonant frequencies are real at leading order so we consider only their real parts in this work.

	By applying the Floquet transform, the Bloch eigenmode $u_\alpha(x) := \F[u](x,\alpha)$ is the solution to the Helmholtz problem
	\begin{equation} \label{eq:scattering_quasi}
	\left\{
	\begin{array} {ll}
	\ds \Delta u_\alpha+ \omega^2 {u_\alpha}  = 0 &\text{in } \R^3 \setminus \p \C, \\
	\nm
	\ds  {u_\alpha}|_{+} -{u_\alpha}|_{-}  =0  & \text{on } \partial \C, \\
	\nm
	\ds  \delta \frac{\partial {u_\alpha}}{\partial \nu} \bigg|_{+} - \frac{\partial {u_\alpha}}{\partial \nu} \bigg|_{-} =0 & \text{on } \partial \C, \\
	\nm
	\ds e^{-\iu  \alpha_1 x_1}  u_\alpha(x_1,x_2,x_3)  \,\,\,&  \mbox{is periodic in } x_1, \\
	\nm
	 \ds u_\alpha(x_1,x_2,x_3)& \text{satisfies the $\alpha$-quasiperiodic outgoing radiation condition} \\ &\hspace{0.5cm} \text{as } \sqrt{x_2^2+x_3^2} \rightarrow \infty.
	\end{array}
	\right.
	\end{equation} 
	We refer to \cite{MaCMiPaP} for the definition of the $\alpha$-quasiperiodic outgoing radiation condition. We denote by $\sigma_\alpha$ the spectrum of the operator
	$$\mathcal{L}(\alpha):=-\frac{1}{\delta  + (1-{\delta}) \chi_{\C}}\nabla \cdot \left(\Big(\delta  + (1-{\delta}) \chi_{\C}\Big)\nabla\right),$$
	acting on $\alpha$-quasiperiodic functions which satisfy the outgoing radiation condition in $x_2, x_3$. As discussed in \Cref{sec:top}, we have
	$$\sigma = \bigcup_{\alpha\in Y^*} \sigma_\alpha,$$	
	which describes the band structure of the crystal. 
	
	\subsection{Analysis of quasiperiodic problem} \label{sec:formulation_quasip}
	In this section we conduct a thorough analysis of the band structure and topological properties of \eqref{eq:scattering_quasi}. We use the methods from \cite{honeycomb, bandgap} to formulate the quasiperiodic resonance problem as an integral equation. The solution $u_\alpha$ of \eqref{eq:scattering_quasi} can be represented as
	\begin{equation*} \label{eq:helm-solution_quasi}
	u_\alpha = \mathcal{S}_{D}^{\alpha,\omega} [\Psi^\alpha],
	\end{equation*}
	for some density $\Psi^\alpha \in  L^2(\p D)$. Then, using the jump relations \eqref{eq:jump1} and \eqref{eq:jump2}, it can be shown that~\eqref{eq:scattering_quasi} is equivalent to the boundary integral equation
	\begin{equation}  \label{eq:boundary_quasi}
	\mathcal{A}^\alpha(\omega, \delta)[\Psi^\alpha] =0,  
	\end{equation}
	where
	\begin{equation} \label{eq:A_quasi_defn}
	\mathcal{A}^\alpha(\omega, \delta) := -\lambda I + \left(\mathcal{K}_D^{ -\alpha,\omega}\right)^*, \quad \lambda := \frac{1+\delta}{2(1-\delta)}.
	\end{equation}
	\subsubsection{Quasiperiodic capacitance matrix}\label{subsec:cap_quasi}
	Let $V_j^\alpha$ be the solution to 
	\begin{equation} \label{eq:V_quasi}
	\begin{cases}
	\ds \Delta V_j^\alpha =0 \quad &\mbox{in } \quad Y\setminus D,\\
	\ds V_j^\alpha = \delta_{ij} \quad &\mbox{on } \quad \partial D_i,\\
	\ds V_j^\alpha(x+(mL,0,0))= e^{\iu \alpha m} V_j^\alpha(x) & \forall m \in \Z, \\
	\ds V_j^\alpha(x_1,x_2,x_3) = O\left(\tfrac{1}{\sqrt{x_2^2+x_3^2}}\right) \quad &\text{as } \sqrt{x_2^2+x_2^2}\to\infty, \text{ uniformly in } x_1,
	\end{cases}
	\end{equation}
	where $\delta_{ij}$ is the Kronecker delta.
	We then define the quasiperiodic capacitance matrix $C^\alpha=(C_{ij}^\alpha)$ by
	\begin{equation} \label{eq:qp_capacitance} 
	C_{ij}^\alpha := \int_{Y\setminus D}\overline{\nabla V_i^\alpha}\cdot\nabla V_j^\alpha  \dx x,\quad i,j=1, 2.
	\end{equation}
	We will see, shortly, that finding the eigenpairs of this matrix represents a leading order approximation to the differential problem \eqref{eq:scattering_quasi}. First, however, we show some useful properties of $C^\alpha$.

	\begin{lem} \label{lem:quasi_matrix_form}
		The matrix $C^\alpha$ is Hermitian with constant diagonal, \ie{},
		$$C_{11}^\alpha = C_{22}^\alpha \in \R, \quad C_{12}^\alpha = \overline{C_{21}^\alpha} \in \mathbb{C}.$$
		%$$C^\alpha = \begin{pmatrix}c_1^\alpha & c_2^\alpha \\ \overline{c_2^\alpha} & c_1^\alpha \end{pmatrix},$$for $c_1^\alpha\in\mathbb{R}$ and $c_2^\alpha\in\mathbb{C}$.
	\end{lem}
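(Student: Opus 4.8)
The plan is to treat the two assertions separately. The \textbf{Hermitian} property is immediate from the definition \eqref{eq:qp_capacitance}: taking the complex conjugate of $C_{ij}^\alpha$ and relabelling the integrand gives $\overline{C_{ij}^\alpha} = \int_{Y\setminus D}\overline{\nabla V_j^\alpha}\cdot\nabla V_i^\alpha\dx x = C_{ji}^\alpha$, so $C^\alpha = (C^\alpha)^*$. In particular the diagonal entries satisfy $C_{ii}^\alpha = \overline{C_{ii}^\alpha}$, whence $C_{11}^\alpha, C_{22}^\alpha\in\R$, while the off-diagonal relation reads exactly $C_{12}^\alpha = \overline{C_{21}^\alpha}$. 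This disposes of everything except the equality $C_{11}^\alpha = C_{22}^\alpha$, which is where the symmetry assumption \eqref{dimer_symmetry} must be used.

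To exploit $D=-D$, I would first record its consequence for the two components: since $D_1$ and $D_2$ lie on opposite sides of the origin, $D=-D$ forces $-D_1 = D_2$ and $-D_2 = D_1$, i.e.\ reflection through the origin interchanges the two resonators. The key step is then to prove the identity
\[ V_2^\alpha(x) = \overline{V_1^\alpha(-x)}. \]
I would establish this by setting $W(x) := \overline{V_1^\alpha(-x)}$ and checking that $W$ solves the defining problem \eqref{eq:V_quasi} for $V_2^\alpha$: harmonicity in $Y\setminus D$ survives both reflection and conjugation; the boundary data are correct because reflection swaps $\p D_1$ and $\p D_2$, giving $W = \delta_{i2}$ on $\p D_i$; and the decay at infinity is preserved since $|{-x}| = |x|$. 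By uniqueness of the solution to \eqref{eq:V_quasi} it then follows that $W = V_2^\alpha$.

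With the identity in hand, the equality of the diagonal entries is a change of variables. Differentiating yields $\nabla V_2^\alpha(x) = -\overline{(\nabla V_1^\alpha)(-x)}$, so $|\nabla V_2^\alpha(x)|^2 = |(\nabla V_1^\alpha)(-x)|^2$; substituting $y=-x$ in $C_{22}^\alpha = \int_{Y\setminus D}|\nabla V_2^\alpha|^2\dx x$ and using that $x\mapsto -x$ maps $Y\setminus D$ onto itself up to a set of measure zero (because $D=-D$ and the unit cell is symmetric) gives $C_{22}^\alpha = \int_{Y\setminus D}|\nabla V_1^\alpha|^2\dx y = C_{11}^\alpha$.

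The main obstacle I anticipate is getting the quasiperiodicity in the identity $V_2^\alpha(x)=\overline{V_1^\alpha(-x)}$ to come out right. A bare reflection $V_1^\alpha(-x)$ is $(-\alpha)$-quasiperiodic rather than $\alpha$-quasiperiodic, so it cannot equal $V_2^\alpha$; it is precisely the complex conjugation that flips the sign of the accumulated phase, restoring $W(x+(mL,0,0)) = e^{\iu\alpha m}W(x)$. Verifying this sign carefully is the crux of the argument, and it is also what explains structurally why the diagonal entries are forced to be real whereas the off-diagonal entries are genuinely complex.
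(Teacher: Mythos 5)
Your proposal is correct and follows essentially the same route as the paper: the paper also notes that Hermitianity is immediate from the definition and then proves $C_{11}^\alpha = C_{22}^\alpha$ via the reflection $(Tf)(x)=f(-x)$ and the identity $TV_1^\alpha=\overline{V_2^\alpha}$, which is exactly your $V_2^\alpha(x)=\overline{V_1^\alpha(-x)}$. The only difference is that you spell out the verification of that identity (uniqueness for \eqref{eq:V_quasi}, including the sign of the quasiperiodic phase), whereas the paper simply asserts it.
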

	\begin{proof}
		From the definition \eqnref{eq:qp_capacitance}, it clearly follows that $C^\alpha$ is Hermitian. To show that $C_{11}^\alpha = C_{22}^\alpha$, we define the mapping $T$ by
		\begin{equation}
		(Tf)(x):= f(-x).
		\end{equation} 
		Then, thanks to the assumed symmetry of the dimer \eqref{dimer_symmetry}, it holds that $TV_1^\alpha = \overline{V_2^\alpha}$ and $TV_2^\alpha = \overline{V_1^\alpha}$. It follows that 
		\begin{align*}
		C_{11}^\alpha &= \int_{Y\setminus D} \overline{\nabla V_1^\alpha}\cdot\nabla V_1^\alpha  \dx x \\ 
		&= \int_{Y\setminus D} \overline{\nabla TV_1^\alpha}\cdot\nabla TV_1^\alpha  \dx x \\		
		&= \int_{Y\setminus D} \nabla V_2^\alpha\cdot\overline{\nabla V_2^\alpha}  \dx x \\
		&= C_{22}^\alpha.\tag*{\qedhere}
		\end{align*}
	\end{proof}
	Using the jump conditions, in the case $\alpha\neq0$, it can be shown that the capacitance coefficients $C_{ij}^\alpha$ are also given by
	$$ C_{ij}^\alpha = - \int_{\partial D_i} \psi_j^\alpha \dx \sigma,\quad i,j=1, 2,$$
	where $\psi_j^\alpha$ are defined by
	$$\psi_j^\alpha = (\S_D^{\alpha,0})^{-1}[\chi_{\p D_j}].$$

	Since $C^\alpha$ is Hermitian, the following lemma follows directly.
	\begin{lem} \label{lem:evec}
		The eigenvalues and corresponding eigenvectors of the quasiperiodic capacitance matrix are given by 
		\begin{align*}
		\lambda_1^\alpha &= C_{11}^\alpha - \left|C_{12}^\alpha \right|, \qquad
		\begin{pmatrix}
		a_1  \\ b_1
		\end{pmatrix} = \frac{1}{\sqrt{2}}\begin{pmatrix}
		- e^{\iu \theta_\alpha}  \\ 1
		\end{pmatrix}, \\
		\lambda_2^\alpha &= C_{11}^\alpha + \left|C_{12}^\alpha \right|, \qquad
		\begin{pmatrix}
		a_2  \\ b_2
		\end{pmatrix} = \frac{1}{\sqrt{2}}\begin{pmatrix}
		e^{\iu \theta_\alpha}  \\ 1
		\end{pmatrix},
		\end{align*}
		where, for $\alpha$ such that $C_{12}^\alpha\neq0$, $\theta_\alpha\in[0,2\pi)$ is defined to be such that
		\begin{equation}
			e^{\iu \theta_\alpha} = \frac{C_{12}^\alpha}{|C_{12}^\alpha|}.
		\end{equation}
	\end{lem}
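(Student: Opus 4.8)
The plan is to exploit the special structure already established in Lemma~\ref{lem:quasi_matrix_form}. That lemma tells us that $C^\alpha$ is a $2\times 2$ Hermitian matrix whose two diagonal entries coincide, so writing $a := C_{11}^\alpha = C_{22}^\alpha \in \R$ and $b := C_{12}^\alpha = \overline{C_{21}^\alpha} \in \mathbb{C}$, the matrix takes the concrete form
$$C^\alpha = \begin{pmatrix} a & b \\ \bar b & a \end{pmatrix}.$$
Everything then reduces to diagonalizing this explicit object, and the statement should follow directly.

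First I would compute the eigenvalues from the characteristic polynomial. Expanding $\det(C^\alpha - \lambda I) = (a-\lambda)^2 - b\bar b = (a-\lambda)^2 - |C_{12}^\alpha|^2$ and setting it to zero gives $\lambda = a \pm |C_{12}^\alpha|$, which is exactly the claimed pair $\lambda_1^\alpha = C_{11}^\alpha - |C_{12}^\alpha|$ and $\lambda_2^\alpha = C_{11}^\alpha + |C_{12}^\alpha|$. These are automatically real, as they must be for a Hermitian matrix.

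Next, for the eigenvectors, on the set where $C_{12}^\alpha \neq 0$ I would invoke the definition $e^{\iu \theta_\alpha} = C_{12}^\alpha / |C_{12}^\alpha|$, so that $b = |C_{12}^\alpha|\, e^{\iu \theta_\alpha}$. Substituting $\lambda = \lambda_2^\alpha$ into $(C^\alpha - \lambda I)v = 0$, the first row reads $-|C_{12}^\alpha|\, x + b y = 0$, i.e.\ $x = e^{\iu \theta_\alpha} y$; choosing $y = 1$ and normalizing yields $\tfrac{1}{\sqrt 2}(e^{\iu\theta_\alpha}, 1)^\top$. The identical computation with $\lambda = \lambda_1^\alpha$ produces the sign flip $x = -e^{\iu \theta_\alpha} y$, giving $\tfrac{1}{\sqrt 2}(-e^{\iu\theta_\alpha}, 1)^\top$. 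A quick verification confirms that each vector has unit norm and that the two are orthogonal, consistent with the spectral theorem for Hermitian matrices.

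There is no genuine obstacle here, since all the content sits in Lemma~\ref{lem:quasi_matrix_form}; this is why the text remarks that the result follows directly. The only point deserving a word of care is that $\theta_\alpha$, and hence the eigenvector formulas, are well-defined only when $C_{12}^\alpha \neq 0$. At values of $\alpha$ for which $C_{12}^\alpha = 0$ the matrix degenerates to $a I$, the two eigenvalues collide, and any orthonormal basis diagonalizes it, so the explicit eigenvector expressions are to be understood only in the stated regime $C_{12}^\alpha \neq 0$ — precisely the restriction imposed in the statement.
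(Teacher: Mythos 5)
Your proposal is correct and is exactly the computation the paper leaves implicit: the paper simply notes that the result ``follows directly'' from the Hermitian, constant-diagonal form of $C^\alpha$ established in Lemma~\ref{lem:quasi_matrix_form}, and your explicit diagonalization of $\bigl(\begin{smallmatrix} a & b \\ \bar b & a\end{smallmatrix}\bigr)$ is the intended argument. Your closing remark about the degenerate case $C_{12}^\alpha = 0$ is a sensible clarification consistent with the restriction stated in the lemma.
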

	
	In the dilute regime, we are able to compute asymptotic expansions for the band structure and topological properties. In this regime, we assume that the resonators can be obtained by rescaling fixed domains $B_1, B_2$ as follows:
	\begin{equation}\label{eq:dilute}
	D_1=\epsilon B_1 - \left(\frac{d}{2},0,0\right), \quad  D_2=\epsilon B_2 + \left(\frac{d}{2},0,0\right),
	\end{equation}
	for some small parameter $\epsilon > 0$. 
	
	We introduce the capacitance $\textrm{Cap}_{B}$ of the fixed domains as follows. Let $B = B_i$ for $i=1$ or $i=2$ and define
	$$
	\textrm{Cap}_{B} := -\int_{\p B}\phi_{B} \dx \sigma, 
	$$
	where $\phi_{B} := (\mathcal{S}_{B}^0)^{-1}[\chi_{\p B}]$. Due to symmetry, the capacitance is the same for the two choices $i =1, 2$. It is easy to see that, by a scaling argument, 
	\begin{equation}\label{eq:cap_scale}
	\textrm{Cap}_{\epsilon B} = \epsilon \textrm{Cap}_B.
	\end{equation}
	
	\begin{lem}\label{lem:cap_estim_quasi}
		We assume that the resonators are in the dilute regime specified by \eqnref{eq:dilute}. We also assume that $\alpha \neq 0$ is fixed. Then we have the following asymptotics of the capacitance matrix $C_{ij}^\alpha$ as $\epsilon\rightarrow 0$:
		\begin{align}
		C_{11}^\alpha &= \epsilon \mathrm{Cap}_B - \frac{(\epsilon \mathrm{Cap}_B)^2}{4\pi}\sum_{m \neq 0}  \frac{e^{\iu m \alpha L}}{  |mL| } + O(\epsilon^3), \label{eq:c1q}
		\\
		C_{12}^\alpha &= -\frac{(\epsilon \mathrm{Cap}_B)^2}{4\pi}\sum_{m =-\infty}^\infty \frac{e^{\iu m \alpha L} }{  |mL + d| } + O(\epsilon^3). \label{eq:c2q}
		\end{align}
		Taking the imaginary part of \eqnref{eq:c2q}, the corresponding asymptotic formula holds uniformly in $\alpha \in Y^*$.
	\end{lem}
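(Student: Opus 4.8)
The plan is to compute the densities $\psi_j^\alpha = (\S_D^{\alpha,0})^{-1}[\chi_{\p D_j}]$ asymptotically and then read off the coefficients from the representation $C_{ij}^\alpha = -\int_{\p D_i}\psi_j^\alpha\dx\sigma$ stated just above the lemma. First I would split the quasiperiodic kernel into its singular and regular parts,
$$G^{\alpha,0}(x,y) = G^0(x,y) + R^{\alpha,0}(x,y), \qquad R^{\alpha,0}(x,y) := -\sum_{m\neq 0}\frac{e^{\iu\alpha L m}}{4\pi|x-y-(mL,0,0)|},$$
noting that $R^{\alpha,0}$ is smooth for $x,y$ near $D$ since its singularities sit at $x-y=(mL,0,0)$, $m\neq0$, away from the small resonators. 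This induces a $2\times2$ block decomposition of $\S_D^{\alpha,0}$: on each diagonal block the singular part is $\S_{D_i}^0$ and the $R^{\alpha,0}$ part is smoothing, while the off-diagonal (cross-resonator) blocks are entirely regular, because $D_1$ and $D_2$ stay separated by the fixed distance $d$.

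Next I would rescale to the fixed domains $B_1,B_2$ using \eqref{eq:dilute}, the homogeneity of $G^0$, and the relation $\mathrm{Cap}_{\epsilon B} = \epsilon\,\mathrm{Cap}_B$ from \eqref{eq:cap_scale}. The leading order is obtained by dropping all regular interactions: the block-diagonal equation $\S_{D_j}^0[\psi_{j,j}^{(0)}]=1$ yields $\psi_{j,j}^{(0)} = \phi_{D_j} := (\S_{D_j}^0)^{-1}[\chi_{\p D_j}]$ and $\psi_{j,i}^{(0)}=0$ for $i\neq j$, so that $C_{jj}^\alpha = \mathrm{Cap}_{D_j} = \epsilon\,\mathrm{Cap}_B$ and $C_{12}^\alpha = O(\epsilon^2)$ at leading order. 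The corrections come from feeding this solution back through the regular kernels, and the key simplification is that over a resonator of diameter $O(\epsilon)$ the smooth kernels may be replaced by their values at the centres $z_1=(-d/2,0,0)$, $z_2=(d/2,0,0)$, so their action reduces to multiplication by the total charge $\int_{\p D_j}\phi_{D_j} = -\epsilon\,\mathrm{Cap}_B$.

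Carrying out a single perturbation step then produces the two formulas. For $\psi_1^\alpha$, the first correction on $\p D_1$ solves $\S_{D_1}^0[\psi_{1,1}^{(1)}]\approx -R^{\alpha,0}(z_1,z_1)\int_{\p D_1}\phi_{D_1} = \epsilon\,\mathrm{Cap}_B\,R^{\alpha,0}(z_1,z_1)$; since $R^{\alpha,0}(z_1,z_1)=-\tfrac{1}{4\pi}\sum_{m\neq0}e^{\iu\alpha L m}/|mL|$ and the cross-resonator feedback only enters at $O(\epsilon^3)$, integrating over $\p D_1$ gives \eqref{eq:c1q}. For $\psi_2^\alpha$, the component on $\p D_1$ solves $\S_{D_1}^0[\psi_{2,1}^{(1)}]\approx -G^{\alpha,0}(z_1,z_2)\int_{\p D_2}\phi_{D_2}=\epsilon\,\mathrm{Cap}_B\,G^{\alpha,0}(z_1,z_2)$, and with $G^{\alpha,0}(z_1,z_2)=-\tfrac{1}{4\pi}\sum_{m}e^{\iu\alpha L m}/|mL+d|$ this gives $C_{12}^\alpha=(\epsilon\,\mathrm{Cap}_B)^2 G^{\alpha,0}(z_1,z_2)+O(\epsilon^3)$, which is \eqref{eq:c2q}.

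The hard part will be making the ``replace the kernel by its central value'' step rigorous with a genuine $O(\epsilon^3)$ remainder: one must bound, in operator norm, the difference between each regular block and its rank-one central approximation using the $C^1$-regularity of $R^{\alpha,0}$ and of the cross kernel near the centres (each derivative contributing one power of $\epsilon$ after rescaling), and verify that the Neumann series for $(\S_D^{\alpha,0})^{-1}$ converges with a remainder of the stated order. A second delicate point is the behaviour of the lattice sums: for fixed $\alpha\neq0$ the series $\sum_{m\neq0}e^{\iu\alpha L m}/|mL|$ converges only conditionally and diverges as $\alpha\to0$, which is exactly why the lemma fixes $\alpha\neq0$. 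The final assertion, that the expansion of $\mathrm{Im}\,C_{12}^\alpha$ holds uniformly in $\alpha$, I would obtain by pairing the $\pm m$ terms: the imaginary part of $e^{\iu\alpha L m}/|mL+d|$ summed over $\pm m$ equals $\sin(m\alpha L)\big(|mL+d|^{-1}-|mL-d|^{-1}\big)=O(m^{-2})$, an absolutely (hence uniformly) convergent series, whereas the corresponding real part decays only like $m^{-1}$.
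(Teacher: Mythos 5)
Your proposal follows essentially the same route as the paper's proof: the same splitting of $G^{\alpha,0}$ into the free-space singularity plus a regular lattice remainder, the same block decomposition of $\S_D^{\alpha,0}$ into an invertible diagonal part of order $\epsilon$ and an $O(\epsilon^2)$ coupling part, a Neumann series for the inverse, and replacement of the smooth kernels by their values at the resonator centres with the total charges $-\epsilon\,\mathrm{Cap}_B$; the signs and the resulting formulas all check out. Your explicit pairing of the $\pm m$ terms to get the $O(m^{-2})$ absolute convergence of the imaginary part is a correct (and slightly more detailed) justification of the uniformity claim that the paper only asserts.
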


\begin{proof} 
	Recall that the capacitance matrix $C_{ij}^\alpha$ can be written as
	$$
	C_{ij}^\alpha = - \int_{\p D_i} (\mathcal{S}_{D}^{\alpha,0})^{-1}[\chi_{\p D_j}] \dx \sigma,
	$$
	for $\alpha\neq 0$. We shall compute the asymptotics of  $(\mathcal{S}_{D}^{\alpha,0})^{-1}$ for small $\epsilon$.
	%In the following, we denote by $D_{i,m}$ the translation of the particle $D_i$ into the $m$-th unit cell, {\it i.e.}, 
	%$$D_{i,m} = \epsilon B + z_{i,m}, \quad z_{i,m} = z_i + m (d+d',0).$$ 
	
	Let us decompose the Green's function $G^{\alpha,k}$ as
	$$
	G^{\alpha,k}(x,y) = G^k(x,y) + \widetilde{G}^{\alpha,k}(x,y),
	$$
	where
	$$
	\widetilde{G}^{\alpha,k}(x,y):= \sum_{m \neq 0} e^{\iu m \alpha L }  G^k(x,y+(mL,0,0)).
	$$
	Then let us define
	\begin{align*}
	\widetilde{S}_D^{\alpha,0}[\varphi] &=  \int_{\p D} \widetilde{G}^{\alpha,k}(x,y) \varphi(y) \dx \sigma (y).
	\end{align*}
	Note that ${S}_D^{\alpha,0}={S}_D^{0}+\widetilde{S}_D^{\alpha,0}$.
	
	Let us write the quasiperiodic single layer potential $\mathcal{S}_{D}^{\alpha,0}$ in a matrix form as
	\begin{align*}
	\mathcal{S}_{D}^{\alpha,0}
	=
	\begin{bmatrix}
	\mathcal{S}_{D_1}^{\alpha,0}
	& \mathcal{S}_{D_2}^{\alpha,0}|_{\p D_1}
	\\
	\mathcal{S}_{D_1}^{\alpha,0}|_{\p D_2}
	& \mathcal{S}_{D_2}^{\alpha,0}
	\end{bmatrix}, 
	\end{align*}
	and then decompose it as
	\begin{align*}
	\mathcal{S}_{D}^{\alpha,0} &= 
	\begin{bmatrix}
	\mathcal{S}_{D_1}^{0}
	& 0
	\\
	0
	& \mathcal{S}_{D_2}^{0}
	\end{bmatrix}
	+
	\begin{bmatrix}
	\widetilde{S}_{D_1}^{\alpha,0}
	&     {S}_{D_2}^{0}|_{\p D_1} + \widetilde{S}_{D_2}^{\alpha,0}|_{\p D_1}
	\\
	{S}_{D_1}^{0}|_{\p D_2} + \widetilde{S}_{D_1}^{\alpha,0}|_{\p D_2}
	&     \widetilde{S}_{D_2}^{\alpha,0}
	\end{bmatrix}
	\\
	&:= S_I + S_{II}.
	\end{align*}
	
	In order to keep the order of the norms in $L^2(\p D)$ and $H^1(\p D)$ constant as $\epsilon\rightarrow 0$, we let $\L$ and $\H$, respectively, denote the spaces $L^2(\p D)$ and $H^1(\p D)$ along with the inner products
	$$\langle \cdot , \cdot\rangle_{\L} = \frac{1}{|\p D|}\langle \cdot , \cdot\rangle_{L^2(\p D)}, \qquad \langle \cdot , \cdot\rangle_{\H} = \frac{1}{|\p D|}\langle \cdot , \cdot\rangle_{H^1(\p D)}.$$	
	Then, for a fixed $\widetilde\varphi \in L^2(\p B)$, if we define $\varphi\in L^2(\epsilon\p B)$ as $\varphi(x) = \widetilde\varphi(\epsilon^{-1} x)$ we have $\|\varphi\|_{\L} = O(1)$ as $\epsilon \rightarrow 0$.
	
	Next, we estimate the operator norms of $S_I$ and $S_{II}$. We first handle the operator $S_I$. By the scaling property
	$
	\mathcal{S}_{\epsilon B}^0[\varphi] = \epsilon \mathcal{S}_{B}^0[\widetilde\varphi],
	$
	it	can be shown that
	$$
	\big\| \mathcal{S}_{ D_j}^0  \big\|_{\B(\L,\H)} \lesssim \epsilon, \qquad \big\| (\mathcal{S}_{ D_j}^0)^{-1} \big \|_{\B(\H, \L)} \lesssim \epsilon^{-1}, \quad j = 1,2,
	$$
	which implies that
	\begin{equation}\label{SI_estim}
	\big\| S_I\big\|_{\B(\L^2,\H^2)} \lesssim \epsilon,\quad \big\| S_I^{-1}\big\|_{\B(\H^2, \L^2)} \lesssim \epsilon^{-1}.
	\end{equation}
	Here, the notation $A \lesssim B$ means that there exists a constant $K$ independent of $\epsilon$ such that $A \leq K B$ for all small enough $\epsilon$. Further, $\B(X,Y)$ is used to denote the space of bounded linear operators between the normed spaces $(X,\|\cdot\|_X)$ and $(Y,\|\cdot\|_Y)$, and the $\|\cdot\|_{\B(X,Y)}$ norm is defined in the usual way as $\|T\|_{\B(X,Y)}:=\inf\{M:\|T(x)\|_Y\leq M\|x\|_X,\forall x\in X \}$.

	Let us now consider $S_{II}$. We introduce the notation
	\begin{align*}
	z_1 = \left(-\frac{d}{2},0,0\right), \quad z_{1,m} = \left(-\frac{d}{2}+mL,0,0\right), \quad z_2 = \left(\frac{d}{2},0,0\right), \quad z_{2,m} =  \left(\frac{d}{2} +mL,0,0\right).
	\end{align*}
	We have, for small $\epsilon$, that
	\begin{align*}
	\widetilde{\mathcal{S}}_{ D_j}^{\alpha,0} \big|_{\p D_i} [\varphi](x) &= \int_{\p D_j} \sum_{m \neq 0}e^{\iu m\alpha L}\Big(  G^0(x,z_{j,m}) + (y-z_j) \cdot \nabla_y G^0(x,y_m^*) \Big) \varphi(y) \dx \sigma (y)
	\nonumber
	\\
	&=-\sum_{m\neq 0}\frac{e^{\iu m\alpha L}\chi_{\p D_i}(x)}{4\pi |z_i - z_{j,m}	|} \int_{\p D_j} {\varphi}(y) \dx \sigma(y)
	+ O\Big( \sum_{m\neq 0}\frac{\epsilon \int_{\p D_j} |{\varphi}(y)| \dx \sigma(y)}{|z_i-z_{j,m}|^2}\Big). 
	\nonumber
	\end{align*}
	Here, $y_m^*$ means a point on the line segment joining $y$ and $z_j$. Note that the series in the remainder term converges.
	Moreover, the gradient of the remainder term is of the same order. Since $\int_{\p D_j}\varphi \dx\sigma = O(\epsilon^2\| \varphi\|_{\L} )$, we get
	\begin{align}
	\widetilde{\mathcal{S}}_{ D_j}^{\alpha,0} \big|_{\p D_i} [\varphi](x)	&=-\sum_{m\neq 0}\frac{e^{\iu m\alpha L} \chi_{\p D_i}(x)}{4\pi |z_i - z_{j,m}|} \int_{\p D_j} {\varphi}(y) \dx\sigma(y)
	+ O(\epsilon^3 \| \varphi\|_{\L} ),
	\label{eq:single_ij_formula}
	\\
	\nabla_{\p D}\widetilde{\mathcal{S}}_{ D_j}^{\alpha,0} \big|_{\p D_i} [\varphi](x) &= O(\epsilon^3 \| \varphi\|_{\L} ). \nonumber 
	\end{align}
	Here, $\nabla_{\p D}$ is used to denote the surface gradient on $\p D$. From this we have
	$$
	\big\| \widetilde{\mathcal{S}}_{ D_j}^{\alpha,0} \big|_{\p D_i} \big\|_{\B(\L, \H)} \lesssim \epsilon^2.
	$$
	Similarly, we can show that
	\begin{align}
	{\mathcal{S}}_{ D_j}^{0} \big|_{\p D_i} [\varphi](x)  
	&=-\frac{ \chi_{\p D_i}(x)}{4\pi |z_i - z_j
		|} \int_{\p D_j} {\varphi}(y) \dx \sigma(y)
	+ O( \epsilon^3 \| \varphi\|_{\L}),
	\label{eq:single_0_formula}
	\\
	\nabla_{\p D}\mathcal{S}_{ D_j}^{0} \big|_{\p D_i} [\varphi](x) &= O(\epsilon^3 \| \varphi\|_{\L} ),\nonumber
	\end{align}
	and  $\|{\mathcal{S}}_{ D_j}^{0} \big|_{\p D_i}\|_{\B(\L,\H)} \lesssim \epsilon^2$. These imply that
	\begin{equation}\label{SII_estim}
	\| S_{II} \|_{\B(\L^2,\H^2)}  \lesssim \epsilon^2.
	\end{equation}
	
	We now compute the asymptotic behaviour of $(\mathcal{S}_D^{\alpha,0})^{-1}$.
	We use the definition $\phi_j :=  S_I^{-1}[\chi_{\p D_j}]$ and introduce the capacitance of each individual resonator $D_j$ as
	$
	\textrm{Cap}_{D_j} :=\nolinebreak -\int_{\p D_j}\phi_{j} \dx \sigma.
	$
	Note that $\textrm{Cap}_{D_j}  = \epsilon \textrm{Cap}_B$ by \eqref{eq:cap_scale}. Since we know from \eqref{SI_estim} and \eqref{SII_estim} that $S_I^{-1}S_{II} = O(\epsilon)$ in the operator norm, applying the Neumann series gives
	\begin{align}
	(\mathcal{S}_D^{\alpha,0})^{-1} [\chi_{\p D_j}] &= (S_I + S_{II})^{-1} [\chi_{\p D_j}] \nonumber
	\\
	&= (I + S_I^{-1} S_{II})^{-1} S_I^{-1}[\chi_{\p D_j}] \nonumber
	\\
	&= (I - S_I^{-1} S_{II} ) [\phi_j ] + O(\epsilon). \label{SDalpha_inverse}
	\end{align}
	We also have
	$$
	|z_{1}-z_{1,m}| = |mL|, \quad |z_{1} - z_{2,m}| =   |m L + d|.
	$$
	Then, from \eqref{eq:single_ij_formula} and \eqref{eq:single_0_formula}, together with the fact that  $\| \phi_j\|_{\L} = \| \mathcal{S}_{I}^{-1}[\chi_{\p D_j}]\|_{\L}\lesssim\epsilon^{-1}, \ j=1,2$, we obtain the series representations
	\begin{align*}
	S_{II}[\phi_1]|_{\p D_1}
	&=\sum_{m\neq 0}e^{\iu m\alpha L}\frac{\epsilon\textrm{Cap}_{B}}{4\pi |mL|}  \chi_{\p D_{1}}  + O(\epsilon^2),
	\\
	S_{II}[\phi_2]|_{\p D_1}
	&=\sum_{m\in\mathbb{Z}}e^{\iu m\alpha L}\frac{\epsilon\textrm{Cap}_{B}}{4\pi |mL + d|}  \chi_{\p D_{2}}  + O(\epsilon^2).
	\end{align*}

	We are ready to compute the capacitance matrix. From \eqref{SDalpha_inverse}, we have
	\begin{align*}
	C_{11}^\alpha &=- \int_{\p D_1} (\mathcal{S}_D^{\alpha,0})^{-1} [\chi_{\p D_1}] \dx \sigma = - \int_{\p D_1}  (I - S_I^{-1} S_{II} ) [\phi_1]\dx \sigma  - \int_{\p D_1}  O({\epsilon})\dx \sigma \\
	&= - \int_{\p D_1} \phi_1 \dx \sigma + \int_{\p D_1} S_I^{-1}\sum_{m\neq 0}e^{\iu m\alpha L}\frac{\epsilon\textrm{Cap}_{B}}{4\pi |mL|}  \chi_{\p D_{1}} \dx \sigma + O(\epsilon^3)
	\\
	&=\textrm{Cap}_{D_1} -  \sum_{m\neq 0}e^{\iu m\alpha L}\frac{\epsilon\textrm{Cap}_{B}}{4\pi |mL|} \Big(-\int_{\p D_1}   \phi_{1}\dx \sigma\Big) + O(\epsilon^3)	
	\\
	&= \epsilon \textrm{Cap}_{B}   - \sum_{m\neq 0}e^{\iu m\alpha L}\frac{(\epsilon\textrm{Cap}_{B})^2}{4\pi |mL|}   +  O(\epsilon^3).
	\end{align*}
	The expression for $C_{12}^\alpha$ can be derived in the same way.
\end{proof}

	\subsubsection{Band structure and Bloch eigenmodes}
	Define normalized extensions of $V_j^\alpha$ as
	$$S_j^\alpha(x) := \begin{cases} \frac{1}{\sqrt{|D_1|}}\delta_{ij} \quad &x \in D_i, \ i=1,2, \\ \frac{1}{\sqrt{|D_1|}}V_j^\alpha(x) \quad &x \in Y\setminus D, \end{cases}$$ where $|D_1|$ is the volume of one of the resonators ($|D_1|=|D_2|$ thanks to the dimer's symmetry \eqref{dimer_symmetry}). %In other words, $S_j^\alpha$ is the constant extension of $V_j^\alpha$ to $D$, and normalized over $L^2(D)$.
	Using similar arguments to those given in \cite{honeycomb, nearzero, highfrequency}, the following two approximation results can be proved.
	
	\begin{thm} \label{thm:char_approx_infinite}
		The characteristic values $\omega_j^\alpha=\omega_j^\alpha(\delta),~j=1,2$, of the operator $\mathcal{A}^{\alpha}(\omega,\delta)$, defined in \eqref{eq:A_quasi_defn}, can be approximated as
		$$ \omega_j^\alpha= \sqrt{\frac{\delta \lambda_j^\alpha }{|D_1|}}  + O(\delta),$$
		where $\lambda_j^\alpha,~j=1,2$, are eigenvalues of the quasiperiodic capacitance matrix $C^\alpha$.
	\end{thm}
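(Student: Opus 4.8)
\section*{Proof proposal}

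The plan is to locate the characteristic values, i.e.\ the frequencies $\omega$ for which $\mathcal{A}^\alpha(\omega,\delta)$ fails to be invertible, by combining a low-frequency expansion of the operator with a reduction to the finite-dimensional eigenvalue problem for the capacitance matrix $C^\alpha$. The starting observation is that, because the term $-\tfrac{\iu k}{4\pi}$ in the expansion of $G^k$ is constant and hence annihilated by $\partial/\partial\nu_x$, the quasiperiodic Neumann--Poincar\'e operator admits an expansion with no first-order term,
\[
\left(\mathcal{K}_D^{-\alpha,\omega}\right)^* = \left(\mathcal{K}_D^{-\alpha,0}\right)^* + \omega^2 \mathcal{K}_{D,2}^\alpha + O(\omega^3),
\]
while $\lambda = \tfrac12 + \tfrac{\delta}{1-\delta}$. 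Writing $\mathcal{A}^\alpha(\omega,\delta) = \mathcal{A}_0 - \tfrac{\delta}{1-\delta} I + \omega^2 \mathcal{K}_{D,2}^\alpha + O(\omega^3)$ with $\mathcal{A}_0 := -\tfrac12 I + (\mathcal{K}_D^{-\alpha,0})^*$ exhibits the problem as a perturbation, in the two small parameters $\delta$ and $\omega^2$, of the singular operator $\mathcal{A}_0$; anticipating $\omega^2 = O(\delta)$ makes the two perturbations comparable.

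Next I would identify the kernel and cokernel of $\mathcal{A}_0$. Using that $\mathcal{S}_D^{\alpha,0}$ is invertible for $\alpha\neq 0$, the densities $\psi_j^\alpha = (\mathcal{S}_D^{\alpha,0})^{-1}[\chi_{\partial D_j}]$ satisfy $\mathcal{S}_D^{\alpha,0}[\psi_j^\alpha] = V_j^\alpha$, which is constant (equal to $\delta_{ij}$) inside each $D_i$; the interior normal derivative therefore vanishes, and the jump relation \eqref{eq:jump2} gives $\mathcal{A}_0[\psi_j^\alpha] = 0$. Hence $\ker \mathcal{A}_0 = \mathrm{span}\{\psi_1^\alpha, \psi_2^\alpha\}$ is two-dimensional, and the relevant solvability (cokernel) conditions are obtained by integrating the equation over each boundary component $\partial D_i$.

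The heart of the argument is then a Lyapunov--Schmidt reduction to a $2\times 2$ system. I would seek a nontrivial $\Psi^\alpha$ with $\mathcal{A}^\alpha(\omega,\delta)[\Psi^\alpha]=0$, write it as $\Psi^\alpha = a\psi_1^\alpha + b\psi_2^\alpha + (\text{h.o.t.})$, and apply the solvability projection. The key identity comes from the interior Helmholtz equation satisfied by $u = \mathcal{S}_D^{\alpha,\omega}[\Psi^\alpha]$: the divergence theorem gives $\int_{\partial D_i}\tfrac{\partial u}{\partial\nu}\big|_- = -\omega^2 \int_{D_i} u$, while the boundary equation forces $\tfrac{\partial u}{\partial\nu}\big|_- = (\lambda-\tfrac12)\Psi^\alpha = \tfrac{\delta}{1-\delta}\Psi^\alpha$. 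Substituting the leading approximations $\int_{\partial D_i}\psi_j^\alpha = -C_{ij}^\alpha$ and $u|_{D_i} \approx a\delta_{1i}+b\delta_{2i}$, and using $|D_1|=|D_2|$, yields the reduced eigenvalue problem
\[
\frac{\delta}{1-\delta}\, C^\alpha \begin{pmatrix} a \\ b\end{pmatrix} = \omega^2 |D_1| \begin{pmatrix} a \\ b \end{pmatrix}.
\]
Recalling the eigenpairs of $C^\alpha$ from \Cref{lem:evec}, this gives $\omega^2 = \tfrac{\delta}{1-\delta}\,\lambda_j^\alpha/|D_1| = \delta\lambda_j^\alpha/|D_1| + O(\delta^2)$, and taking square roots produces the claimed expansion $\omega_j^\alpha = \sqrt{\delta\lambda_j^\alpha/|D_1|} + O(\delta)$.

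The step I expect to be the main obstacle is making this reduction rigorous and establishing the error order. The formal substitution must be upgraded to a genuine statement about characteristic values, which I would do through Gohberg--Sigal theory (the generalized Rouch\'e theorem), writing $\mathcal{A}^\alpha(\omega,\delta)$ as a holomorphic operator-valued pencil in $\omega$ and tracking how the two characteristic values bifurcate from the singularity of $\mathcal{A}_0$ as $\delta\to 0$; this requires uniform control of $\mathcal{A}_0$ restricted to the complement of its kernel, together with careful bookkeeping of the $O(\omega^3)$ tail and of the correction between $\Psi^\alpha$ and its kernel projection, which is precisely what is absorbed into the $O(\delta)$ remainder. A secondary subtlety is that the quasiperiodic outgoing radiation condition makes the exact characteristic values complex, but the imaginary contributions enter only at higher order, so the real leading term is as stated; this is the standard mechanism underlying the approximation results of \cite{honeycomb, nearzero, highfrequency} on which the proof is modelled.
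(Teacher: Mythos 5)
Your proposal is correct and follows essentially the same route as the paper, which does not spell out the proof but defers to \cite{honeycomb, nearzero, highfrequency}: a low-frequency expansion of $(\mathcal{K}_D^{-\alpha,\omega})^*$, identification of $\ker(-\tfrac12 I + (\mathcal{K}_D^{-\alpha,0})^*) = \mathrm{span}\{\psi_1^\alpha,\psi_2^\alpha\}$, reduction via the divergence-theorem identity to the generalized eigenvalue problem $\tfrac{\delta}{1-\delta}C^\alpha v = \omega^2|D_1|v$, and Gohberg--Sigal theory to justify the bifurcation of characteristic values. The only point to handle with a little more care than your sketch suggests is that the absence of an $O(\omega)$ term in the expansion of $G^{\alpha,\omega}$ for $\alpha\neq 0$ requires the analytic continuation of the lattice sum rather than termwise expansion, but this is standard in the cited works.
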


	\begin{thm} \label{thm:mode_approx}
		The Bloch eigenmodes $u_j^\alpha,~j=1,2$, corresponding to the resonances $\omega_j^\alpha$, can be approximated as
		$$ u_j^\alpha(x) = a_j S^\alpha_1(x) + b_j S^\alpha_2(x) + O(\delta),$$
		where $\left(\begin{smallmatrix}a_j\\b_j\end{smallmatrix}\right),~j=1,2,$
%		$\begin{pmatrix} a_j & b_j \end{pmatrix}^\mathrm{T},~j=1,2$,
		 are the eigenvectors of the quasiperiodic capacitance matrix $C^\alpha$, as given by \Cref{lem:evec}.%, and the functions $S^\alpha_i~i=1,2$ are defined as	$$ S^\alpha_i(x) = \S_{D}^{\alpha,0}[\psi_i](x), \quad x\in Y.$$
	\end{thm}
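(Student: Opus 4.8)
The plan is to work directly with the boundary-integral representation $u_j^\alpha = \S_D^{\alpha,\omega_j^\alpha}[\Psi_j^\alpha]$, where $\Psi_j^\alpha$ is a nontrivial density spanning $\ker \A^\alpha(\omega_j^\alpha,\delta)$, and to pin down $\Psi_j^\alpha$ to leading order in $\delta$. The first thing I would record is that the two building blocks $S_j^\alpha$ are themselves zero-frequency single-layer potentials: writing $\psi_j^\alpha = (\S_D^{\alpha,0})^{-1}[\chi_{\p D_j}]$, the function $\S_D^{\alpha,0}[\psi_j^\alpha]$ is harmonic in $D$, continuous across $\p D$, and equals $\delta_{ij}$ on $\p D_i$, hence equals $\delta_{ij}$ throughout $D_i$ by uniqueness of the Dirichlet problem; comparing with the definition of $S_j^\alpha$ gives $\S_D^{\alpha,0}[\psi_j^\alpha] = \sqrt{|D_1|}\, S_j^\alpha$. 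It therefore suffices to prove that, after a suitable normalization, $\Psi_j^\alpha = a_j\psi_1^\alpha + b_j\psi_2^\alpha + O(\delta)$ and that $\S_D^{\alpha,\omega_j^\alpha}$ may be traded for $\S_D^{\alpha,0}$ up to an $O(\delta)$ error.

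The heart of the argument is a Lyapunov--Schmidt reduction of $\A^\alpha(\omega,\delta)[\Psi]=0$ onto the finite-dimensional kernel of the unperturbed operator. Using $\lambda = \tfrac12 + \tfrac{\delta}{1-\delta}$ together with the low-frequency expansion of the quasiperiodic Neumann--Poincar\'e operator (whose $O(\omega)$ term vanishes in $3$D, so the first correction is $O(\omega^2)$), one writes $\A^\alpha(\omega,\delta) = \A_0 - \tfrac{\delta}{1-\delta}I + O(\omega^2)$ with $\A_0 := -\tfrac12 I + (\K_D^{-\alpha,0})^*$. For $\alpha\neq 0$ the $C^{1,s}$ hypothesis makes $\A_0$ Fredholm of index zero; by the jump relation \eqref{eq:jump2} its kernel is exactly the set of densities $\phi$ for which $\S_D^{\alpha,0}[\phi]$ has vanishing interior conormal derivative, i.e. is constant on each $D_i$, which is $\mathrm{span}\{\psi_1^\alpha,\psi_2^\alpha\}$. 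Integrating \eqref{eq:jump2} over $\p D_i$ and applying the divergence theorem shows $\int_{\p D_i}\A_0[\phi]\dx\sigma = 0$ for all $\phi$, so the cokernel is spanned by $\{\chi_{\p D_1},\chi_{\p D_2}\}$.

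Writing the kernel density as $\Psi_0 = a\psi_1^\alpha + b\psi_2^\alpha$, the solvability condition for the next-order correction is that $\int_{\p D_i}\A^\alpha(\omega,\delta)[\Psi_0]\dx\sigma$ vanish to leading order for $i=1,2$. The key computation is the Helmholtz analogue of the cokernel identity: integrating $(-\tfrac12 I + (\K_D^{-\alpha,\omega})^*)[\phi]$ over $\p D_i$ and using $(\Delta+\omega^2)\S_D^{\alpha,\omega}[\phi]=0$ inside $D_i$ gives the exact relation $\int_{\p D_i}(-\tfrac12 I + (\K_D^{-\alpha,\omega})^*)[\phi]\dx\sigma = -\omega^2\int_{D_i}\S_D^{\alpha,\omega}[\phi]\dx x$. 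Combined with $\int_{\p D_i}\psi_j^\alpha\dx\sigma = -C_{ij}^\alpha$ and $\int_{D_i}\S_D^{\alpha,0}[\Psi_0]\dx x = |D_1|\,v_i$ (where $v=(a,b)^\top$), the two solvability conditions reduce precisely to the $2\times2$ generalized eigenvalue problem $\delta\, C^\alpha v = \omega^2 |D_1|\, v$. This forces $v=(a_j,b_j)^\top$ to be an eigenvector of $C^\alpha$, simultaneously recovering \Cref{thm:char_approx_infinite} and identifying the coefficients with those of \Cref{lem:evec}.

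Finally, substituting back, $u_j^\alpha = \S_D^{\alpha,\omega_j^\alpha}[\Psi_j^\alpha]$; since $\omega_j^\alpha = O(\sqrt{\delta})$ the low-frequency expansion gives $\S_D^{\alpha,\omega_j^\alpha} = \S_D^{\alpha,0} + O(\delta)$, the reduction gives $\Psi_j^\alpha = \Psi_0 + O(\delta)$, and applying $\S_D^{\alpha,0}[\psi_j^\alpha] = \sqrt{|D_1|}\,S_j^\alpha$ with $\Psi_0$ normalized yields $u_j^\alpha = a_j S_1^\alpha + b_j S_2^\alpha + O(\delta)$. I expect the main obstacle to be making the remainder control rigorous and uniform: one must justify the low-frequency expansions of the quasiperiodic single-layer and Neumann--Poincar\'e operators with operator-norm remainder bounds, verify that $\A_0$ is boundedly invertible on the complement of its kernel so that the Lyapunov--Schmidt correction is genuinely $O(\delta)$, and treat the degeneration of $\S_D^{\alpha,0}$ as $\alpha\to 0$ separately. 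These are exactly the technical ingredients supplied by the Gohberg--Sigal and perturbation framework of \cite{honeycomb, nearzero, highfrequency}.
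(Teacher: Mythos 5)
Your proposal is correct and follows essentially the same route as the paper, which does not write out a proof but defers to the layer-potential/Gohberg--Sigal arguments of the cited references \cite{honeycomb, nearzero, highfrequency}: represent $u_j^\alpha=\S_D^{\alpha,\omega}[\Psi^\alpha]$, identify $\ker\left(-\tfrac12 I+(\K_D^{-\alpha,0})^*\right)=\mathrm{span}\{\psi_1^\alpha,\psi_2^\alpha\}$ together with the cokernel spanned by $\{\chi_{\p D_1},\chi_{\p D_2}\}$, and reduce the solvability conditions to the eigenvalue problem $\delta C^\alpha v=\omega^2|D_1|v$. The one point to make explicit when writing this up is that the claimed $O(\delta)$ (rather than $O(\omega_j^\alpha)=O(\sqrt{\delta})$) remainder in the final substitution relies on the quasiperiodic single layer potential being analytic in $\omega^2$ for $\alpha\neq 0$, so that $\S_D^{\alpha,\omega}=\S_D^{\alpha,0}+O(\omega^2)$ --- a cancellation you invoke only for the Neumann--Poincar\'e operator --- with the case $\alpha=0$ recovered by analytic continuation as in \Cref{rmk:phase}.
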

\begin{rmk} \label{rmk:tight_binding}
	From Theorems~\ref{thm:char_approx_infinite} and \ref{thm:mode_approx}, we can see that the capacitance matrix can be considered to be a discrete approximation of the differential problem \eqref{eq:scattering_quasi}, since its eigenpairs directly determine the resonant frequencies and the Bloch eigenmodes (at leading order). This is analogous to the tight-binding model commonly used in the quantum-mechanical SSH system. %In both cases, the $(i,j)$\textsuperscript{th} entry of the matrix represents the interactions between the $i$\textsuperscript{th} and $j$\textsuperscript{th} resonators in the array.
\end{rmk}
\begin{rmk}
	In the quantum-mechanical SSH model, the tight-binding model is typically handled with a \emph{nearest-neighbour approximation}, where only the interactions between neighbouring particles are considered. In this regime, the model is described by the simple Hamiltonian matrix 
	\begin{equation} \label{eq:hamiltonian}
	\begin{bmatrix}
	0 & v+w e^{\iu  L \alpha} \\
	v+w e^{-\iu  L \alpha} & 0
	\end{bmatrix},
	\end{equation}
	where $v$ and $w$ are the two inter-particle coupling constants. Compare this to our discrete approximation, given by the capacitance matrix $C^\alpha$.	If we applied a nearest-neighbour approximation, the capacitance matrix $C^\alpha$ would have the same form as the Hamiltonian \eqref{eq:hamiltonian} (up to an additive diagonal matrix).
	This would be achieved by neglecting the series in \eqref{eq:c1q} and truncating the series in \eqref{eq:c2q}  to $|m|\leq1$ only. However, in classical wave propagation problems such as these, the slow decay of the capacitance matrix means this approximation may be inaccurate. This is because non-negligible interactions exist even between resonators separated by several unit cells. We shall see that this is also the case for finite crystals, in \Cref{sec:finite_nearest_n}.
\end{rmk}

 \begin{rmk} \label{rmk:phase}
 	Theorem \ref{thm:mode_approx} shows that the Bloch eigenmodes are asymptotically constant on each resonator. The value attained on each successive resonator differs by a phase factor determined by $\theta_\alpha$. This theorem was proved in \cite{highfrequency} using layer potential techniques under the assumption $\alpha \neq 0$. By analytic continuation we may extend to any $\alpha \in Y^*$ \cite{phases}.
 \end{rmk}
	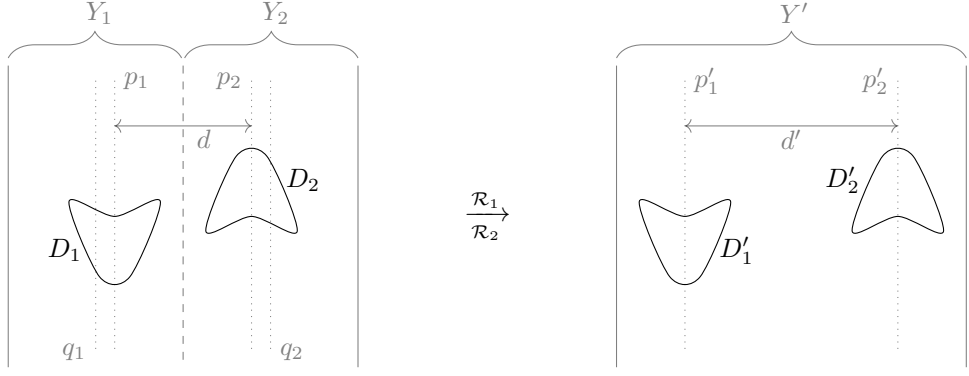
\begin{figure}[tb]
	\centering
	\begin{tikzpicture}[scale=2]
	\begin{scope}
	\draw[opacity=0.5] (-0.5,1) -- (-0.5,-1);
	\draw[opacity=0.5, dashed] (0.65,1) -- (0.65,-1);
	\draw[opacity=0.5, dotted] (1.225,0.9) -- (1.225,-0.9)node[right]{$q_2$};
	\draw[opacity=0.5, dotted] (0.075,0.9) -- (0.075,-0.9)node[left]{$q_1$};
	\draw[opacity=0.5, dotted] (1.1,0.9)  node[left]{$p_2$} -- (1.1,-0.9);
	\draw[opacity=0.5, dotted] (0.2,0.9)  node[right]{$p_1$} -- (0.2,-0.9);
	\draw[opacity=0.5]  (1.8,1) -- (1.8,-1);
%	\draw (0.2,-0.2) coordinate (start1) circle (8pt) node[yshift=-2pt, xshift=-8pt ]{$D_1$};
	\draw  plot [smooth cycle] coordinates {(-0.1,0.1) (0.2,0) (0.5,0.1) (0.3,-0.4) (0.1,-0.4)} node[xshift=-13pt, yshift=10pt]{$D_1$};
	\draw  plot [smooth cycle] coordinates {(0.8,-0.1) (1.1,0) (1.4,-0.1) (1.2,0.4) (1,0.4)} node[xshift=25pt, yshift=-9pt]{$D_2$};
%	\begin{scope}[xshift = 1.3cm]
%	\draw (-0.2,0.2) coordinate (start2) circle (8pt) node[yshift=0pt, xshift=-9pt ]{$D_2$};
%	\end{scope}
	\draw[<->, opacity=0.5] (0.2,0.6) -- (1.1,0.6) node[pos=0.65, yshift=-5pt,]{$d$};
	\end{scope}
	\draw (2.65,0) node{$\xrightarrow[\Ro_2]{\Ro_1}$};
	\begin{scope}[xshift=4cm]
	\draw[opacity=0.5] (-0.5,1) -- (-0.5,-1);
	\draw[opacity=0.5]  (1.8,1) -- (1.8,-1);
%	\draw (-0.05,-0.2) coordinate (start1) circle (8pt) node[yshift=-2pt, xshift=-8pt ]{$D_1'$};
	\draw  plot [smooth cycle] coordinates {(-0.35,0.1) (-0.05,0) (0.25,0.1) (0.05,-0.4) (-0.15,-0.4)} node[xshift=25pt, yshift=10pt]{$D_1'$};
	\draw  plot [smooth cycle] coordinates {(1.05,-0.1) (1.35,0) (1.65,-0.1) (1.45,0.4) (1.25,0.4)} node[xshift=-15pt, yshift=-9pt]{$D_2'$};
%	\begin{scope}[xshift = 1.3cm]
%	\draw (0.05,0.2) coordinate (start2) circle (8pt) node[yshift=0pt, xshift=-9pt ]{$D_2'$};
%	\draw  plot [smooth cycle] coordinates {(0.8,-0.1) (1.1,0) (1.4,-0.1) (1.2,0.4) (1,0.4)} node[xshift=-15pt, yshift=-9pt]{$D_2'$};
%	\end{scope}
	\draw[<->, opacity=0.5] (-0.05,0.6)  -- (1.35,0.6) node[pos=0.5, yshift=-5pt]{$d'$};
	\draw[opacity=0.5, dotted] (1.35,0.9)  node[left]{$p_2'$} -- (1.35,-0.9);
	\draw[opacity=0.5, dotted] (-0.05,0.9)  node[right]{$p_1'$} -- (-0.05,-0.9);
	\end{scope}
	\begin{scope}[yshift=1.05cm]
%	\draw [decorate,opacity=0.5,decoration={brace,amplitude=10pt}]
%	(-0.5,0) -- (1.8,0) node [black,midway]{};
%	\node[opacity=0.5] at (0.67,0.3) {$Y$};
	\draw [decorate,opacity=0.5,decoration={brace,amplitude=10pt}]
	(-0.5,0) -- (0.64,0) node [black,midway]{};
	\draw [decorate,opacity=0.5,decoration={brace,amplitude=10pt}]
	(0.66,0) -- (1.8,0) node [black,midway]{};
	\node[opacity=0.5] at (0.1,0.3) {$Y_1$};
	\node[opacity=0.5] at (1.26,0.3) {$Y_2$};
	\end{scope}
	\begin{scope}[xshift=4cm,yshift=1.05cm]
	\draw [decorate,opacity=0.5,decoration={brace,amplitude=10pt}]
	(-0.5,0) -- (1.8,0) node [black,midway]{};
	\node[opacity=0.5] at (0.67,0.3) {$Y'$};	
	\end{scope}
	\end{tikzpicture}
	\caption{Reflections taking $D$ to $D'$.} \label{fig:YY'}
\end{figure}

We now introduce notation which, thanks to the assumed symmetry of the resonators, will allow us to prove topological properties of the chain. Divide $Y$ into two subsets $Y=Y_1\cup Y_2$, where $Y_1 := [-\frac{L}{2},0]\times \R^2$ and let $Y_2 := [0,\frac{L}{2}]\times \R^2$, as depicted in \Cref{fig:YY'}. Define $q_1$ and $q_2$ to be the central planes of $Y_1$ and $Y_2$, that is, the planes $q_1 := \{ -\frac{L}{4}\} \times \R^2$ and $q_2 := \{ \frac{L}{4}\} \times \R^2$. Let $\Ro_1$ and $\Ro_2$ be reflections in the respective planes. Observe that, thanks to the assumed symmetry of each resonator \eqref{resonator_symmetry}, the ``complementary'' dimer $D' = D_1' \cup D_2'$, given by swapping $d$ and $d'$, satisfies $D_i' = \Ro_i D_i$ for $i=1,2$.
Define the operator $T_\alpha$ on the set of $\alpha$-quasiperiodic functions $f$ on $Y$ as
$$T_\alpha f(x) := \begin{cases} e^{-\iu \alpha L}\overline{f(\Ro_1x)}, \quad &x\in Y_1, \\ \overline{f(\Ro_2x)}, &x\in Y_2, \end{cases}$$
where the factor $e^{-\iu \alpha L}$ is chosen so that the image of a continuous ($\alpha$-quasiperiodic) function is continuous.

We will now proceed to use $T_\alpha$ to analyse the different topological properties of the two dimer configurations. Define the quantity ${C_{12}^{\alpha}}'$ analogously to $C_{12}^\alpha$ but on the dimer $D'$, that is, to be the top-right element of the corresponding quasiperiodic capacitance matrix, defined in \eqref{eq:qp_capacitance}.

\begin{lem}\label{lem:cc'}
	We have
	\begin{equation*}\label{eq:cc'}
		{C_{12}^{\alpha}}' = e^{-\iu \alpha L} \overline{C_{12}^\alpha}.\end{equation*}
	Consequently, if $d = d' = \frac{L}{2}$ then $C_{12}^{\pi/L} = 0$. 
\end{lem}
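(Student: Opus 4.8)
The plan is to exploit the antilinear reflection operator $T_\alpha$ to transport the capacitance potentials of $D$ onto those of the complementary dimer $D'$, after which the identity drops out of a change of variables in the Dirichlet energy. Write $W_j^\alpha$ for the solutions of \eqref{eq:V_quasi} with $D$ replaced by $D'$, so that ${C_{12}^\alpha}' = \int_{Y\setminus D'}\overline{\nabla W_1^\alpha}\cdot\nabla W_2^\alpha\dx x$. The key claim to establish is
$$T_\alpha V_1^\alpha = e^{-\iu\alpha L}\,W_1^\alpha, \qquad T_\alpha V_2^\alpha = W_2^\alpha.$$

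First I would verify that each $T_\alpha V_j^\alpha$ solves the boundary value problem for $D'$. Since $\Ro_i D_i = D_i'$ and both reflection and complex conjugation preserve harmonicity (the coefficients being real), $T_\alpha V_j^\alpha$ is harmonic on $Y_1\setminus D_1'$ and on $Y_2\setminus D_2'$ separately; the factor $e^{-\iu\alpha L}$ built into $T_\alpha$ is precisely what forces the two pieces to agree, in value and in normal derivative, across the interface $\{0\}\times\R^2$, using the quasiperiodicity $V_j^\alpha(L/2,\cdot) = e^{\iu\alpha L}V_j^\alpha(-L/2,\cdot)$, so that the glued function is genuinely harmonic on $Y\setminus D'$. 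The decay is inherited since reflection and conjugation leave $O(1/\sqrt{x_2^2+x_3^2})$ unchanged, and $T_\alpha$ preserves $\alpha$-quasiperiodicity by construction. Reading off the boundary data, for $x\in\partial D_i'$ one has $\Ro_i x\in\partial D_i$, hence $V_j^\alpha(\Ro_i x)=\delta_{ij}$; this gives $T_\alpha V_1^\alpha = e^{-\iu\alpha L}\delta_{1i}$ and $T_\alpha V_2^\alpha = \delta_{2i}$ on $\partial D_i'$, and uniqueness for \eqref{eq:V_quasi} on $D'$ yields the displayed identities.

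With this in hand I substitute $W_1^\alpha = e^{\iu\alpha L}T_\alpha V_1^\alpha$ and $W_2^\alpha = T_\alpha V_2^\alpha$ into the energy form; the conjugated rescaling of $W_1^\alpha$ pulls out a global factor $e^{-\iu\alpha L}$, leaving $e^{-\iu\alpha L}\int_{Y\setminus D'}\overline{\nabla T_\alpha V_1^\alpha}\cdot\nabla T_\alpha V_2^\alpha\dx x$, which I split over $Y_1\setminus D_1'$ and $Y_2\setminus D_2'$. On each piece $\nabla(T_\alpha V_j^\alpha)(x) = c_i\,J\,\overline{(\nabla V_j^\alpha)(\Ro_i x)}$, where $J=\mathrm{diag}(-1,1,1)$ is the (orthogonal, involutive) differential of $\Ro_i$ and $c_1=e^{-\iu\alpha L}$, $c_2=1$; the scalar factors cancel between the conjugated first slot and the second slot, and the orthogonality $Ja\cdot Jb = a\cdot b$ removes $J$, so the integrand becomes $(\nabla V_1^\alpha\cdot\overline{\nabla V_2^\alpha})(\Ro_i x)$. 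The change of variables $y=\Ro_i x$ (unit Jacobian, $\Ro_i(Y_i\setminus D_i')=Y_i\setminus D_i$) converts each piece to $\int_{Y_i\setminus D_i}\nabla V_1^\alpha\cdot\overline{\nabla V_2^\alpha}\dx y$; summing and recognising $\int_{Y\setminus D}\nabla V_1^\alpha\cdot\overline{\nabla V_2^\alpha} = \overline{C_{12}^\alpha}$ produces ${C_{12}^\alpha}' = e^{-\iu\alpha L}\overline{C_{12}^\alpha}$.

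For the consequence, observe that when $d=d'=\tfrac{L}{2}$ the reflection plane $q_i$ coincides with the symmetry plane $p_i$ of $D_i$, so $\Ro_i D_i = D_i$ by \eqref{resonator_symmetry}; hence $D'=D$ and ${C_{12}^\alpha}' = C_{12}^\alpha$, and the lemma reads $C_{12}^\alpha = e^{-\iu\alpha L}\overline{C_{12}^\alpha}$. Conjugating \eqref{eq:V_quasi} gives $\overline{V_j^\alpha}=V_j^{-\alpha}$, whence $\overline{C_{12}^\alpha} = C_{12}^{-\alpha}$; combining this with the $\tfrac{2\pi}{L}$-periodicity of $\alpha\mapsto C_{12}^\alpha$ at $\alpha=\pi/L$ gives $C_{12}^{\pi/L} = -\,C_{12}^{-\pi/L} = -\,C_{12}^{\pi/L}$, so $C_{12}^{\pi/L}=0$. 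The step I expect to be the main obstacle is the first one, namely checking rigorously that $T_\alpha V_j^\alpha$ is a legitimate harmonic, quasiperiodic, decaying solution on $Y\setminus D'$ — in particular the $C^1$-matching across $\{0\}\times\R^2$ that justifies gluing the two reflected pieces; once that identification is secured, the rest is routine bookkeeping in the orthogonal change of variables.
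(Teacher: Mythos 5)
Your proof is correct and follows essentially the same route as the paper: both arguments identify the capacitance potentials of $D'$ with the images of those of $D$ under the antilinear reflection operator $T_\alpha$ (you apply $T_\alpha$ to the unprimed potentials, the paper to the primed ones, which is equivalent since $T_\alpha$ is an involution) and then obtain the identity by an orthogonal change of variables in the Dirichlet energy, with the consequence at $\alpha=\pi/L$ deduced from $D'=D$ together with the reality of $C_{12}^{\pi/L}$. Your write-up is in fact more careful about the $C^1$-gluing across $\{0\}\times\R^2$ and the placement of the conjugations than the paper's stated intermediate identities.
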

\begin{proof}
	Define ${V_1^\alpha }', {V_2^\alpha }'$ by \eqnref{eq:V_quasi} but on $D'$ instead of $D$. Observe that $T_\alpha {V_1^\alpha}' = e^{-\iu \alpha L}\overline{V_1^\alpha }$ and $T_\alpha {V_2^\alpha}' = \overline{V_1^\alpha }$. %and that $\overline{T_\alpha}$ is an isometry on $L^2(\partial D)$. 
	Then, we find that
	\begin{align*}
	{C_{12}^{\alpha}}' &=  \int_{Y\setminus D'} \overline{\nabla {V_1^\alpha}'} \cdot \nabla {V_2^\alpha}'\dx x \\
	&= \int_{Y\setminus D} \overline{\nabla {T_\alpha V_1^\alpha}'} \cdot \nabla {T_\alpha V_2^\alpha}'\dx x \\
	& = e^{-\iu \alpha L}\int_{Y\setminus D} \nabla {V_1^\alpha} \cdot \overline{\nabla {V_2^\alpha}}\dx x \\
	&= e^{-\iu \alpha L} \overline{C_{12}^\alpha}.
	\end{align*}
	At $\alpha = \pi/L$, we have ${C_{12}^{\pi/L}}' = -C_{12}^{\pi/L}$. Moreover, if $d=d'$, the symmetry of the structure means that ${C_{12}^{\pi/L}}' = C_{12}^{\pi/L}$ so it must be the case that $C_{12}^{\pi/L} = 0$.
\end{proof}

\begin{lem}\label{lem:c=0}
	We assume that $D$ is in the dilute regime specified by \eqnref{eq:dilute}. Then, for $\epsilon$ small enough,
	\begin{itemize}
		\item[(i)] $\mathrm{Im}\ C_{12}^\alpha > 0$ for $0<\alpha<\pi/L$ and $\mathrm{Im}\ C_{12}^\alpha < 0$ for $-\pi/L<\alpha<0$. In particular, $\mathrm{Im}\ {C_{12}^{\alpha}}$ is zero if and only if $\alpha \in\{ 0, \pi/L \}$.
		\item [(ii)] $C_{12}^{\alpha}$ is zero if and only if both $d = d'$ and $\alpha = \pi/L$.
		\item [(iii)] $C_{12}^{\pi/L} < 0$  when $d<d'$ and $C_{12}^{\pi/L} > 0$ when $d>d'$. In both cases we have $C_{12}^{0} < 0$.
	\end{itemize}
\end{lem}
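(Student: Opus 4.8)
The plan is to extract everything from the leading-order asymptotics of Lemma~\ref{lem:cap_estim_quasi}, treating the imaginary part (which converges and, as stated there, is uniform in $\alpha$) separately from the values at the special points $\alpha=0,\pi/L$. Write $\kappa:=(\epsilon\,\mathrm{Cap}_B)^2/(4\pi)>0$, so that $C_{12}^\alpha=-\kappa\sum_{m\in\Z}e^{\iu m\alpha L}/|mL+d|+O(\epsilon^3)$ for fixed $\alpha\neq0$. Two exact symmetries will be used throughout: conjugating the defining problem \eqref{eq:V_quasi} gives $V_j^{-\alpha}=\overline{V_j^\alpha}$, hence $C_{12}^{-\alpha}=\overline{C_{12}^\alpha}$; combined with $2\pi/L$-periodicity in $\alpha$ this forces $\mathrm{Im}\,C_{12}^\alpha$ to be an odd function of $\alpha$ that vanishes \emph{exactly} at $\alpha=0$ and $\alpha=\pi/L$, and forces $C_{12}^0,C_{12}^{\pi/L}\in\R$.

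For part (i) I would pair the $\pm m$ terms in the imaginary part. Since $0<d<L$ we have $mL\pm d>0$ for $m\ge1$, so $\mathrm{Im}\,C_{12}^\alpha=\kappa\sum_{m\ge1}c_m\sin(m\alpha L)+O(\epsilon^3)$ with $c_m=\tfrac1{mL-d}-\tfrac1{mL+d}=\tfrac{2d}{m^2L^2-d^2}$. The sequence $(c_m)$ is positive, strictly decreasing, convex (the continuous version $f(x)=2d/(x^2L^2-d^2)$ satisfies $f''>0$ for $x>d/L$) and tends to $0$. The classical positivity theorem for sine series with convex, decreasing coefficients (Fejér; equivalently two Abel summations against the non-negative Fejér kernel) then gives $\sum_{m\ge1}c_m\sin(m\alpha L)>0$ for $\alpha L\in(0,\pi)$, oddness supplies the opposite sign on $(-\pi/L,0)$, and the exact vanishing noted above pins the zeros at $\alpha\in\{0,\pi/L\}$.

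For part (iii) at $\alpha=\pi/L$ the series is alternating and convergent, so $C_{12}^{\pi/L}=-\kappa S+O(\epsilon^3)$ with $S=\sum_{m\in\Z}(-1)^m/|mL+d|$. Splitting into $m\ge0$ and $m\le-1$ and reindexing the second part via $nL-d=(n-1)L+d'$ rewrites $S=g(d)-g(d')$, where $g(t):=\sum_{m\ge0}(-1)^m/(mL+t)$. Since $g'(t)=-\sum_{m\ge0}(-1)^m/(mL+t)^2<0$ (an alternating series of decreasing positive terms), $g$ is strictly decreasing; hence $S>0$ iff $d<d'$ and $S<0$ iff $d>d'$, and as $\kappa>0$ with $S=O(1)\neq0$ dominating the remainder for small $\epsilon$, we get $C_{12}^{\pi/L}<0$ for $d<d'$ and $>0$ for $d>d'$. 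The sign of $C_{12}^0$ cannot come from the (divergent) dilute series, so I would instead argue by potential theory: Green's identity gives $C_{12}^0=\int_{Y\setminus D}\nabla V_1^0\cdot\nabla V_2^0=-\int_{\p D_1}\p_n V_2^0\,\dx\sigma$ (the cell-wall contributions cancel by periodicity at $\alpha=0$ and the far field decays), and since $V_2^0$ is harmonic in $Y\setminus D$ with $0\le V_2^0\le1$ and vanishes on $\p D_1$, Hopf's lemma yields $\p_n V_2^0>0$, so $C_{12}^0<0$ irrespective of $d$ versus $d'$.

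Part (ii) then follows by assembling the pieces: $C_{12}^\alpha=0$ forces $\mathrm{Im}\,C_{12}^\alpha=0$, so by (i) $\alpha\in\{0,\pi/L\}$; the case $\alpha=0$ is excluded by $C_{12}^0<0$, while at $\alpha=\pi/L$ part (iii) shows $C_{12}^{\pi/L}\neq0$ unless $d=d'$, and Lemma~\ref{lem:cc'} supplies the converse that $C_{12}^{\pi/L}=0$ when $d=d'$. I expect the main obstacle to be part (i): the positivity of the sine series is the real analytic content, and the delicate point is that it is claimed on the \emph{whole} open interval. Near $\alpha=0$ and $\alpha=\pi/L$ the leading term $\kappa\sum c_m\sin(m\alpha L)$ vanishes and is no longer obviously larger than the $O(\epsilon^3)$ remainder, so a crude term-by-term comparison only gives positivity on compact subintervals of $(0,\pi/L)$ for fixed small $\epsilon$. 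Closing the gap up to the endpoints requires genuinely using the uniformity of the imaginary-part asymptotic together with the exact vanishing of $\mathrm{Im}\,C_{12}^\alpha$ at $0$ and $\pi/L$ (ideally a rate, e.g.\ a differentiated or multiplicative form of the estimate), rather than the bare additive bound.
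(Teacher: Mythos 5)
Your overall architecture matches the paper's: establish the sign of $\mathrm{Im}\,C_{12}^\alpha$ from the leading-order dilute expansion, treat $\alpha=0$ and $\alpha=\pi/L$ by exact (non-asymptotic) arguments, and assemble part (ii) from (i), (iii) and \Cref{lem:cc'}. The individual computations, however, go by genuinely different routes, and each is sound. For (i) you pair the $\pm m$ terms and invoke Fej\'er positivity of sine series with convex decreasing coefficients; the paper instead rewrites the lattice sum via Lerch's transcendent and obtains the integral representation \eqnref{eq:Imf}, whose integrand has a fixed sign — the integral representation has the advantage of also yielding derivative information (used below) and of feeding directly into the proof of \Cref{thm:band_gap}. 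For (iii) at $\alpha=\pi/L$ your identity $S=g(d)-g(d')$ with $g$ strictly decreasing is a clean repackaging of the paper's argument, which shows $\tfrac{\p}{\p d}C_{12}^{\pi/L}>0$ and combines this with the exact zero at $d=d'$ from \Cref{lem:cc'}; the content is the same monotonicity. For $C_{12}^0<0$ you correctly observe that the divergent series is useless and argue via Green's identity and the Hopf boundary point lemma; the paper instead uses the exact fact that the total quasiperiodic capacitance vanishes at $\alpha=0$, giving $C_{11}^0+C_{12}^0=0$ and hence $C_{12}^0=-C_{11}^0<0$. Both are exact arguments valid without diluteness; the paper's avoids any regularity caveats attached to Hopf's lemma on a $C^{1,s}$ boundary.

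The one step you leave open — positivity of $\mathrm{Im}\,C_{12}^\alpha$ \emph{up to} the endpoints $\alpha=0,\pi/L$, where the leading term vanishes and could be swamped by the $O(\epsilon^3)$ remainder — is exactly the delicate point, and the paper closes it as you anticipated. It first notes (as you do) that $V_1^\alpha,V_2^\alpha$ are real at $\alpha\in\{0,\pi/L\}$, so $\mathrm{Im}\,C_{12}^\alpha$ vanishes there \emph{exactly}, not just to leading order. It then differentiates the integral representation \eqnref{eq:Imf} to check that $\tfrac{\p}{\p\alpha}\mathrm{Im}\,f\neq0$ at $\alpha=0$ and $\alpha=\pm\pi/L$, i.e.\ the leading term crosses zero transversally; for $\epsilon$ small this forces $\mathrm{Im}\,C_{12}^\alpha$ to be monotonic in neighbourhoods of these points, so its exact zeros there are isolated, and the sign established in the interior propagates to the full punctured intervals. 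Your instinct that this requires more than the bare additive $O(\epsilon^3)$ bound is correct: the monotonicity step implicitly uses that the remainder is small in a $C^1$ sense in $\alpha$, which is the ``differentiated form of the estimate'' you asked for. With that ingredient added (your Fej\'er sum is differentiable term by term and its derivative at the endpoints is a nonzero convergent series, so the same transversality argument runs), your proof of (i) closes, and the rest of the proposal stands.
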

The proof of \Cref{lem:c=0} is given in Appendix \ref{app:lemma}. This lemma describes the crucial properties of the behaviour of the curve $\{C_{12}^\alpha:\alpha\in Y^*\}$ in the complex plane. The periodic nature of $Y^*$ means that this is a closed curve. Part (i) tells us that this curve crosses the real axis in precisely two points. Taken together with (iii), we know that this curve winds around the origin in the case $d>d'$, but not in the case $d<d'$. 

\begin{thm} \label{thm:band_gap}
 If $d\neq d'$ there exists a band gap, for $\alpha$ away from zero. That is, for any small $\alpha_0>0$,  we have that
	$$\max_{|\alpha|>\alpha_0} \omega_1^\alpha < \min_{|\alpha|>\alpha_0} \omega_2^\alpha,$$
	for small enough $\epsilon$ and $\delta$.
\end{thm}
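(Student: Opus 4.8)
The plan is to transfer the gap from the frequencies $\omega_j^\alpha$ to the eigenvalues $\lambda_j^\alpha$ of the quasiperiodic capacitance matrix, and then to analyse $\lambda_j^\alpha$ using the dilute asymptotics of \Cref{lem:cap_estim_quasi}. First, by \Cref{thm:char_approx_infinite} we have $\omega_j^\alpha = \sqrt{\delta \lambda_j^\alpha/|D_1|} + O(\delta)$, with the error uniform in $\alpha$ once $\alpha$ is bounded away from the singular point $\alpha = 0$. Since $t \mapsto \sqrt{t}$ is increasing, for $\delta$ small enough it is therefore enough to prove the corresponding gap for the capacitance eigenvalues, namely $\max_{|\alpha| \geq \alpha_0} \lambda_1^\alpha < \min_{|\alpha| \geq \alpha_0} \lambda_2^\alpha$. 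By \Cref{lem:evec}, $\lambda_1^\alpha = C_{11}^\alpha - |C_{12}^\alpha|$ and $\lambda_2^\alpha = C_{11}^\alpha + |C_{12}^\alpha|$, so the two bands touch only where $C_{12}^\alpha = 0$; by \Cref{lem:c=0}(ii) this does not occur for any $\alpha$ when $d \neq d'$, and by continuity of $C_{12}^\alpha$ on the compact set $\{\alpha_0 \leq |\alpha| \leq \pi/L\}$ we obtain a uniform lower bound $|C_{12}^\alpha| \geq c > 0$. This yields strict \emph{pointwise} separation $\lambda_1^\alpha < \lambda_2^\alpha$.

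The real content is that pointwise separation does not give a band gap: one needs the maximum of the lower band to lie below the minimum of the upper band, and these extrema may be attained at different quasimomenta because the common diagonal $C_{11}^\alpha$ itself varies with $\alpha$. I would quantify this by inserting the expansions \eqref{eq:c1q}--\eqref{eq:c2q}, writing $C_{11}^\alpha = \epsilon \mathrm{Cap}_B - (\epsilon \mathrm{Cap}_B)^2 S_{11}(\alpha) + O(\epsilon^3)$ and $C_{12}^\alpha = -(\epsilon \mathrm{Cap}_B)^2 S_{12}(\alpha) + O(\epsilon^3)$, where $S_{11}, S_{12}$ are the explicit lattice sums. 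The $\alpha$-independent leading term $\epsilon \mathrm{Cap}_B$ cancels between the two bands, and the gap $\min_\alpha \lambda_2^\alpha - \max_\alpha \lambda_1^\alpha$ equals $(\epsilon \mathrm{Cap}_B)^2$ times
\[
\min_\alpha\big(S_{11}(\alpha) + |S_{12}(\alpha)|\big) - \max_\alpha\big(S_{11}(\alpha) - |S_{12}(\alpha)|\big)
\]
plus $O(\epsilon^3)$. Thus the whole theorem reduces, at leading order, to showing that this combinatorial quantity is strictly positive whenever $d \neq d'$.

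To attack that inequality I would use three structural facts. First, $S_{11}(\alpha) = -\tfrac{1}{2\pi L}\ln\!\big(2\sin(\alpha L/2)\big)$ is explicit and monotone on $(0,\pi/L]$. Second, $S_{11}$ and $\mathrm{Re}\, S_{12}$ carry the \emph{same} logarithmic singularity as $\alpha \to 0$, so that $S_{12} - S_{11}$ extends to an absolutely convergent (hence bounded) series whose $\alpha \to 0$ limit is a strictly positive constant, obtained by pairing the $\pm m$ terms. Consequently $S_{11} + |S_{12}|$ stays large (indeed it diverges) near $\alpha = 0$ while $S_{11} - |S_{12}|$ stays bounded there, so neither competing extremum sits at the excluded point. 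Third, complex conjugation of the quasiperiodic problem gives $C_{11}^{-\alpha} = C_{11}^\alpha$ and $C_{12}^{-\alpha} = \overline{C_{12}^\alpha}$, whence both $C_{11}^\alpha$ and $|C_{12}^\alpha|$ have vanishing $\alpha$-derivative at the band edge $\alpha = \pi/L$; this identifies $\pi/L$ as the natural location of the extrema, at which the gap is exactly $2|C_{12}^{\pi/L}|$, nonzero precisely when $d \neq d'$ by \Cref{lem:c=0}.

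I expect the main obstacle to be upgrading the inequality from the endpoints to a genuinely \emph{uniform} statement over the whole of $[\alpha_0, \pi/L]$, and in particular controlling the competition between the $\alpha$-oscillation of the diagonal $C_{11}^\alpha$ and the size of $|C_{12}^\alpha|$, which are of the same order $\epsilon^2$. The delicate regime is $d \to d'$, where $|C_{12}^{\pi/L}| \to 0$ and the gap degenerates; there one must rule out an interior value of $\alpha$ giving a smaller $\lambda_2^\alpha$ (or larger $\lambda_1^\alpha$) than the band edge, which requires monotonicity and convexity information about $|S_{12}(\alpha)|$ that has no closed form and must be read off from the sign and winding structure of the curve $\{C_{12}^\alpha : \alpha \in Y^*\}$ recorded in \Cref{lem:c=0}. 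Once the order-$\epsilon^2$ inequality is established with a positive margin, the $O(\epsilon^3)$ capacitance errors and, after taking square roots, the $O(\delta)$ frequency error are absorbed for $\epsilon$ and $\delta$ small, which is exactly the sense of the statement.
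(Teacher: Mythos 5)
Your reduction to the capacitance eigenvalues, the observation that pointwise separation $\lambda_1^\alpha<\lambda_2^\alpha$ is not enough, and the identification of the real difficulty --- that the $\alpha$-variation of the common diagonal $C_{11}^\alpha$ and the size of $|C_{12}^\alpha|$ are both of order $\epsilon^2$ and must be compared against each other uniformly in $\alpha$ --- all match the structure of the paper's argument. But the proposal stops exactly at the step that constitutes the actual proof. You reduce the theorem to the positivity of $\min_\alpha\big(S_{11}+|S_{12}|\big)-\max_\alpha\big(S_{11}-|S_{12}|\big)$ and then state that establishing this requires monotonicity and convexity information about $|S_{12}(\alpha)|$ to be ``read off from the sign and winding structure'' recorded in \Cref{lem:c=0}. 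That lemma, however, only contains qualitative information (the sign of $\mathrm{Im}\, C_{12}^\alpha$, and the sign and vanishing of $C_{12}^\alpha$ at $\alpha\in\{0,\pi/L\}$); it cannot produce the quantitative, uniform-in-$\alpha$ lower bound on $|C_{12}^\alpha|$ that your scheme needs. So the key inequality is named as a target, not proved.

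The paper closes this gap differently and more concretely. Rather than locating the extrema of the two bands (your claim that both extrema sit at $\alpha=\pi/L$ is not established --- a vanishing derivative there only identifies a critical point, and the band gap equals $2|C_{12}^{\pi/L}|$ only if both extrema are attained at the band edge), it compares both bands to a single reference constant $C_0$, the leading-order degenerate eigenvalue at $d=d'$, $\alpha=\pi/L$. Using the closed form $\sum_{m\neq0}e^{\iu m\alpha L}/|m|=-\log\big(2-2\cos(\alpha L)\big)$ for the diagonal lattice sum and the integral (Lerch-transcendent) representation \eqref{eq:fint} of $C_{12}^\alpha$, the inequality $C_{11}^\alpha+|C_{12}^\alpha|>C_0$ reduces to a pointwise comparison of an explicit integral with an explicit logarithm; this is supplied by a dedicated elementary estimate, \Cref{lem:ineq} in Appendix~\ref{app:lemma_gap}, valid for all $0<d<L$ and all $\alpha$. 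The complementary inequality $C_{11}^\alpha-|C_{12}^\alpha|<C_0$ then follows because $\log\big(\tfrac12(1-\cos(\alpha L))\big)\leq0$ while $|C_{12}^{\pi/L}|>0$ when $d\neq d'$ by \Cref{lem:c=0}. If you want to complete your version, the missing ingredient is precisely such a quantitative pointwise bound of $|S_{12}(\alpha)|$ against $S_{11}(\alpha)$; winding-number information alone will not suffice.
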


	The proof of \Cref{thm:band_gap} is given in Appendix~\ref{app:lemma_gap}. The argument is based on representing the first and second resonant frequencies as
\begin{align*}
\omega_1^\alpha = \sqrt{\frac{\delta\left(C_{11}^\alpha -|C_{12}^\alpha|\right)}{|D_1|}} + O(\delta), \qquad     \omega_2^\alpha = \sqrt{\frac{\delta\left(C_{11}^\alpha +|C_{12}^\alpha|\right)}{|D_1|}} + O(\delta),
\end{align*}
and making use of the fact that, in the dilute regime and for fixed $\alpha\neq0$, the capacitance coefficients can be expanded using \Cref{lem:cap_estim_quasi}.

\begin{rmk}
	Part (ii) of \Cref{lem:c=0} is a particularly deep result which shows that the dilute crystal has a degeneracy precisely when $d=d'$. The methods developed in \cite{honeycomb} can be applied to show that the dispersion relation has a Dirac cone at $\alpha = \pi/L$ in this case. As $d$ increases across the point where $d=d'$, the band gap closes (to form a Dirac cone) before reopening. In \Cref{thm:phase} we will show that the reopened band gap has a non-trivial topology, similar to what has been observed in other systems (for example, in \cite{Yves1}).
\end{rmk}

Combining the results of \Cref{lem:c=0}, \Cref{lem:evec} and \Cref{thm:mode_approx}, we obtain the following result concerning the band inversion that takes place between the two geometric regimes $d<d'$ and $d>d'$.
\begin{prop} \label{prop:bandinv}
	For $\epsilon$ small enough, the band structure at $\alpha = \pi/L$ is inverted between the $d<d'$ and $d>d'$ regimes. In other words, the eigenfunctions associated with the first and second bands at $\alpha = \pi/L$ are given, respectively, by
	\begin{equation*}
	u_1^{\pi/L}(x) = S_1^{\pi/L}(x)+S_2^{\pi/L}(x)+O(\delta), \quad u_2^{\pi/L}(x) = S_1^{\pi/L}(x)-S_2^{\pi/L}(x)+O(\delta), %\qquad\text{when }d<d',
	\end{equation*}
	when $d<d'$ and by
	\begin{equation*}
	u_1^{\pi/L}(x) = S_1^{\pi/L}(x)-S_2^{\pi/L}(x)+O(\delta), \quad u_2^{\pi/L}(x) = S_1^{\pi/L}(x)+S_2^{\pi/L}(x)+O(\delta), %\qquad\text{when }d>d',
	\end{equation*}
	when $d>d'$.
%	\begin{equation*}
%	\begin{pmatrix}
%	a_1  \\ b_1
%	\end{pmatrix} = \frac{1}{\sqrt{2}}\begin{pmatrix}
%	1  \\ 1
%	\end{pmatrix}, \quad \begin{pmatrix}
%	a_2  \\ b_2
%	\end{pmatrix} = \frac{1}{\sqrt{2}}\begin{pmatrix}
%	-1  \\ 1
%	\end{pmatrix} \quad \text{when }d<d',
%	\end{equation*}
%	and
%	\begin{equation*}
%	\begin{pmatrix}
%	a_1  \\ b_1
%	\end{pmatrix} = \frac{1}{\sqrt{2}}\begin{pmatrix}
%	- 1  \\ 1
%	\end{pmatrix}, \quad \begin{pmatrix}
%	a_2  \\ b_2
%	\end{pmatrix} = \frac{1}{\sqrt{2}}\begin{pmatrix}
%	1  \\ 1
%	\end{pmatrix} \quad \text{when }d>d'.
%	\end{equation*}
\end{prop}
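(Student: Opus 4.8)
The plan is to read off the eigenmodes directly from the explicit eigenvector formulas in \Cref{lem:evec}, once the phase $\theta_{\pi/L}$ has been pinned down by the sign information in \Cref{lem:c=0}. The only quantity that distinguishes the two bands at a fixed $\alpha$ is the phase $e^{\iu \theta_\alpha} = C_{12}^\alpha/|C_{12}^\alpha|$, so everything reduces to evaluating this phase at $\alpha = \pi/L$ in each of the two geometric regimes and then substituting into \Cref{thm:mode_approx}.

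First I would observe that at $\alpha = \pi/L$ the coefficient $C_{12}^{\pi/L}$ is real: by part (i) of \Cref{lem:c=0}, $\mathrm{Im}\, C_{12}^\alpha$ vanishes precisely at $\alpha \in \{0, \pi/L\}$. Since we work in the regimes $d \neq d'$, part (ii) of the same lemma guarantees $C_{12}^{\pi/L} \neq 0$, so $\theta_{\pi/L}$ is well defined and $e^{\iu \theta_{\pi/L}} = \mathrm{sgn}\, C_{12}^{\pi/L} \in \{+1, -1\}$. Part (iii) then fixes the sign: $e^{\iu \theta_{\pi/L}} = -1$ when $d < d'$ and $e^{\iu \theta_{\pi/L}} = +1$ when $d > d'$.

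With the phase determined, I would substitute into the eigenvectors of \Cref{lem:evec}. In the regime $d < d'$, taking $e^{\iu \theta_{\pi/L}} = -1$ gives a first eigenvector proportional to $(1,1)^T$ and a second eigenvector proportional to $(-1,1)^T$; in the regime $d > d'$, taking $e^{\iu \theta_{\pi/L}} = +1$ gives a first eigenvector proportional to $(-1,1)^T$ and a second eigenvector proportional to $(1,1)^T$. Feeding these $(a_j, b_j)$ into \Cref{thm:mode_approx}, and absorbing the normalization constant and an overall unimodular factor into the $O(\delta)$-defined eigenmode, yields the stated symmetric combination $S_1^{\pi/L} + S_2^{\pi/L}$ and antisymmetric combination $S_1^{\pi/L} - S_2^{\pi/L}$, with their roles exchanged between the two regimes. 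The consistency of the labelling --- that the symmetric or antisymmetric mode is attached to the \emph{first} rather than the second band --- follows from \Cref{thm:char_approx_infinite} together with $\lambda_1^\alpha = C_{11}^\alpha - |C_{12}^\alpha| \leq C_{11}^\alpha + |C_{12}^\alpha| = \lambda_2^\alpha$, which ensures $\omega_1^{\pi/L} \leq \omega_2^{\pi/L}$.

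I do not expect a genuine obstacle here: the proposition is essentially a bookkeeping corollary of \Cref{lem:c=0}, \Cref{lem:evec} and \Cref{thm:mode_approx}, with the real mathematical content (the sign of $C_{12}^{\pi/L}$ as a function of the geometry) already contained in part (iii) of \Cref{lem:c=0}. The only points requiring care are the phase and sign conventions --- in particular that one is free to rescale each eigenvector by a unimodular constant, so that the assertion is really a statement about the \emph{relative} sign of the two resonators' amplitudes --- and checking that $\theta_{\pi/L}$ is well defined, which is precisely why the reality and non-vanishing of $C_{12}^{\pi/L}$ must be invoked first.
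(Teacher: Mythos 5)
Your proposal is correct and follows exactly the route the paper intends: the paper gives no separate proof but states that the proposition follows by combining \Cref{lem:c=0}, \Cref{lem:evec} and \Cref{thm:mode_approx}, which is precisely the substitution of $e^{\iu\theta_{\pi/L}}=\mathrm{sgn}\,C_{12}^{\pi/L}$ into the eigenvector formulas that you carry out. Your added care about the reality and non-vanishing of $C_{12}^{\pi/L}$, the eigenvalue ordering, and the unimodular rescaling freedom is exactly the right bookkeeping.
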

The eigenmode $S_1^{\pi/L}(x)+S_2^{\pi/L}(x)$ is constant and attains the same value on both resonators, while the eigenmode $S_1^{\pi/L}(x)-S_2^{\pi/L}(x)$ has values of opposite sign on the two resonators. They therefore correspond, respectively, to monopole and dipole modes, and \Cref{prop:bandinv} shows that the monopole/dipole nature of the first two Bloch eigenmodes are swapped between the two regimes. We will now proceed to define a topological invariant which we will use to characterise the topology of a chain and prove how its value depends on the relative sizes of $d$ and $d'$. This invariant is intimately connected with the band inversion phenomenon, and we will prove that it is non-trivial only if $d>d'$.

\begin{thm} \label{thm:phase}
	We assume that $D$ is in the dilute regime specified by \eqnref{eq:dilute}. Then the Zak phase $\varphi_j^z, j = 1,2$, defined by 
	$$\varphi_j^z := \iu \int_{Y^*} \big\langle u_j^\alpha, \frac{\p }{\p \alpha} u_j^\alpha \big\rangle \dx \alpha,$$ 
	where $\langle \cdot, \cdot \rangle$ denotes the $L^2(D)$-inner product, satisfies
	$$ \varphi_j^z = \begin{cases}
	0, \quad &\text{if} \ \ d < d', \\
	\pi, \quad &\text{if} \ \ d > d',
	\end{cases}$$
	for $\epsilon$ and $\delta$ small enough.
\end{thm}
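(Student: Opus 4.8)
The plan is to evaluate the Berry connection integrand explicitly using the leading-order description of the Bloch modes, reduce the Zak phase to the winding number of the curve $\alpha \mapsto C_{12}^\alpha$ about the origin, and then read off that winding number from \Cref{lem:c=0}. The starting point is \Cref{thm:mode_approx}, which gives $u_j^\alpha = a_j S_1^\alpha + b_j S_2^\alpha + O(\delta)$ with $(a_j,b_j)$ the eigenvectors from \Cref{lem:evec}. The crucial observation is that, by the definition of $S_j^\alpha$, the restrictions to $D$ are the constants $S_1^\alpha|_D = \tfrac{1}{\sqrt{|D_1|}}\chi_{D_1}$ and $S_2^\alpha|_D = \tfrac{1}{\sqrt{|D_1|}}\chi_{D_2}$, which are \emph{independent of $\alpha$} and orthonormal in $L^2(D)$ (using $|D_1|=|D_2|$). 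Hence, on $D$, the entire $\alpha$-dependence of $u_j^\alpha$ sits in the scalar coefficients $a_j(\alpha),b_j(\alpha)$, and $\tfrac{\p}{\p\alpha}u_j^\alpha|_D$ differentiates only these.

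First I would substitute the eigenvectors, recalling $e^{\iu\theta_\alpha}=C_{12}^\alpha/|C_{12}^\alpha|$ (well defined for all $\alpha$ when $d\neq d'$, by \Cref{lem:c=0}(ii), so $\theta_\alpha$ is smooth on $Y^*$). For $j=1$ we have $(a_1,b_1)=\tfrac{1}{\sqrt2}(-e^{\iu\theta_\alpha},1)$, so $\p_\alpha a_1=-\tfrac{\iu}{\sqrt2}\theta_\alpha' e^{\iu\theta_\alpha}$ and $\p_\alpha b_1=0$. Orthonormality then gives $\langle u_1^\alpha,\tfrac{\p}{\p\alpha}u_1^\alpha\rangle_{L^2(D)}=\overline{a_1}\,\p_\alpha a_1+\overline{b_1}\,\p_\alpha b_1=\tfrac{\iu}{2}\theta_\alpha'$, and the identical computation for $(a_2,b_2)=\tfrac{1}{\sqrt2}(e^{\iu\theta_\alpha},1)$ yields the same value, so the two bands carry the same Zak phase. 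The $O(\delta)$ remainder contributes at most $O(\delta)$ after integration. Therefore, writing $' = \tfrac{\dx}{\dx\alpha}$,
$$\varphi_j^z = \iu\int_{Y^*}\frac{\iu}{2}\,\theta_\alpha'\dx\alpha + O(\delta) = -\frac12\int_{Y^*}\theta_\alpha'\dx\alpha + O(\delta) = -\pi\, w + O(\delta),$$
where $w := \frac{1}{2\pi}\int_{Y^*}\theta_\alpha'\dx\alpha$ is the winding number of $\alpha\mapsto C_{12}^\alpha$.

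It then remains to compute $w$ from \Cref{lem:c=0}. By part (i) the closed curve $\{C_{12}^\alpha:\alpha\in Y^*\}$ meets the real axis only at $\alpha\in\{0,\pi/L\}$, lying strictly in the upper half-plane for $0<\alpha<\pi/L$ and in the lower half-plane for $-\pi/L<\alpha<0$. Part (iii) pins the crossings: $C_{12}^0<0$ always, while $C_{12}^{\pi/L}<0$ if $d<d'$ and $C_{12}^{\pi/L}>0$ if $d>d'$. When $d<d'$ both crossings lie on the negative real axis, the curve stays in the closed left half-plane and never encircles the origin, so $w=0$ and $\varphi_j^z=0$. When $d>d'$ the curve runs from the negative real axis to the positive real axis through one half-plane and returns through the other, encircling the origin exactly once, so $|w|=1$ and $\varphi_j^z\equiv\pi\pmod{2\pi}$.

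Finally I would dispose of the $O(\delta)$ term by a quantization argument: since the crystal has inversion symmetry, the Zak phase of each band lies in $\{0,\pi\}$ modulo $2\pi$ (as recalled in \Cref{sec:top}), and a discrete-valued invariant cannot be moved off this value by a continuous $O(\delta)$ perturbation once $\delta$ is small enough; this pins the value to exactly $\pi$ in the second case. The main obstacle is precisely this last point—making rigorous that the contribution of the $O(\delta)$ remainder in \Cref{thm:mode_approx} to the Berry connection is genuinely controlled (one must differentiate the remainder in $\alpha$ and integrate over $Y^*$) and that the resulting small correction cannot change the quantized value. Everything upstream is an explicit finite computation, made possible by the $\alpha$-independence of $S_j^\alpha|_D$, which collapses the Berry connection to the single scalar phase $\theta_\alpha$.
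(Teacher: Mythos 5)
Your proposal is correct and follows essentially the same route as the paper: reduce the Berry connection to $\tfrac{\iu}{2}\theta_\alpha'$ using the $\alpha$-independence and orthonormality of $S_j^\alpha|_D$, identify the Zak phase with the winding of $C_{12}^\alpha$ about the origin via parts (i) and (iii) of \Cref{lem:c=0}, and dispose of the $O(\delta)$ remainder by the $\{0,\pi\}$ quantization of the Zak phase. The only additions beyond the paper's argument are the (welcome) explicit appeal to \Cref{lem:c=0}(ii) for the smoothness of $\theta_\alpha$ and a slightly more careful discussion of the remainder term.
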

\begin{proof}
	We compute the Zak phase $\phi^z_j, j=1,2,$ of the first and second band, respectively. Observe that
	$$  \langle S_1^\alpha,  S_1^\alpha \rangle = 1, \qquad  \langle S_2^\alpha,  S_2^\alpha \rangle = 1, \qquad \langle S_1^\alpha,  S_2^\alpha \rangle = 0,$$
	and in $D$ we have
	$$ \frac{\p}{\p \alpha}  S_1^\alpha \equiv 0, \qquad \frac{\p}{\p \alpha}  S_2^\alpha \equiv 0,$$
	for all $\alpha \in Y^*$. By Theorem \ref{thm:mode_approx} and Lemma \ref{lem:evec}, it follows that 
	\begin{align*}
	\langle u_j^\alpha, \frac{\p }{\p \alpha} u_j^\alpha \rangle  = \frac{\iu}{2}\frac{\p \theta_\alpha }{\p \alpha} + O(\delta),
	\end{align*}
	so the Zak phase is given by
	$$
	\varphi_j^z = -\frac{1}{2}\left[\theta_\alpha \right]_{Y^*} + O(\delta).
	$$
	Since we know that $\varphi_j^z$ is an integer multiple of $\pi$, we have for small enough $\delta$ that
	\begin{equation} \label{eq:theta}
	\varphi_j^z = -\frac{1}{2}\left[\theta_\alpha \right]_{Y^*}.
	\end{equation}
	This representation of $\varphi_j^z$, which is analogous to well-known results for Hamiltonian systems such as the SSH model \cite{shortcourse}, shows that the Zak phase is given by the change in the argument of $C_{12}^\alpha$ as $\alpha$ varies over the Brillouin zone $Y^*$. We can see from parts (i) and (iii) of \Cref{lem:c=0} that the winding number of the origin depends on whether $d<d'$ or $d>d'$. In the two cases we have, respectively, $\left[\theta_\alpha \right]_{Y^*} = 0$ and $\left[\theta_\alpha \right]_{Y^*} = -2\pi$. Therefore, if $\delta$ is small enough, we have that
	\begin{equation*}\varphi_j^z = \begin{cases}
	0, \quad &d < d', \\
	\pi, \quad &d' < d.
	\end{cases}\tag*{\qedhere}\end{equation*}
\end{proof}
 
\begin{rmk} \label{rmk:dilute}
	The dilute assumption is not necessary to conclude that the Zak phase is non-trivial for certain configurations. Combining \Cref{lem:cc'} and \eqref{eq:theta}, both of which are valid without assumptions of diluteness, we find that ${\varphi_j^z}' - \varphi_j^z = \pi$, where ${\varphi_j^z}'$ is the Zak phase of the crystal with resonator separation $d'$ instead of $d$ (as in \Cref{fig:YY'}). The assumption of diluteness is invoked to prove part (ii) of \Cref{lem:c=0}, which shows that there are only two different topological regimes and that degeneracy occurs only at $d=d'$. We conjecture that this is true even without the dilute assumption, in which case it is not hard to prove \Cref{prop:bandinv} and \Cref{thm:phase}.
\end{rmk}

%\begin{rmk} \label{rmk:bulkbdy}
Theorem \ref{thm:phase} shows that the Zak phase of the crystal is non-zero precisely when $d > d'$. The bulk-boundary correspondence suggests that we can create topologically protected subwavelength edge modes by joining half-space subwavelength crystals, one with $\varphi_j^z = 0$ and the other with $\varphi_j^z = \pi$. According to \Cref{rmk:dilute}, this is also valid in the non-dilute case. In \Cref{sec:finite_crystals}, we will study a finite chain that is designed with this principle in mind and demonstrate that it exhibits an edge mode that is stable with respect to symmetry-preserving imperfections. 
%\end{rmk}

\begin{figure*} [tbh]
	\begin{center}
		\includegraphics[height=5.0cm]{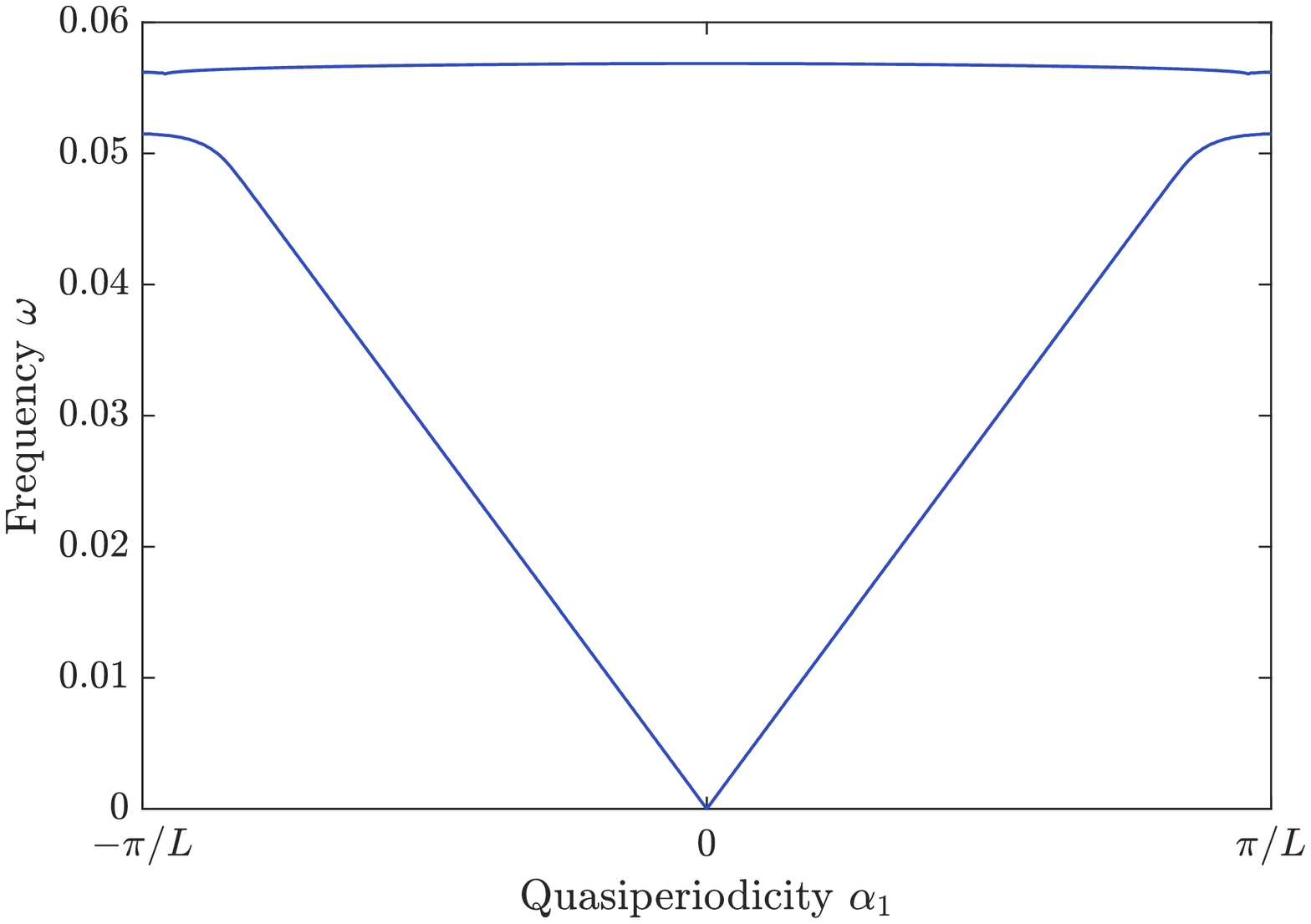}
		\hspace{0.1cm}
		\includegraphics[height=5.0cm]{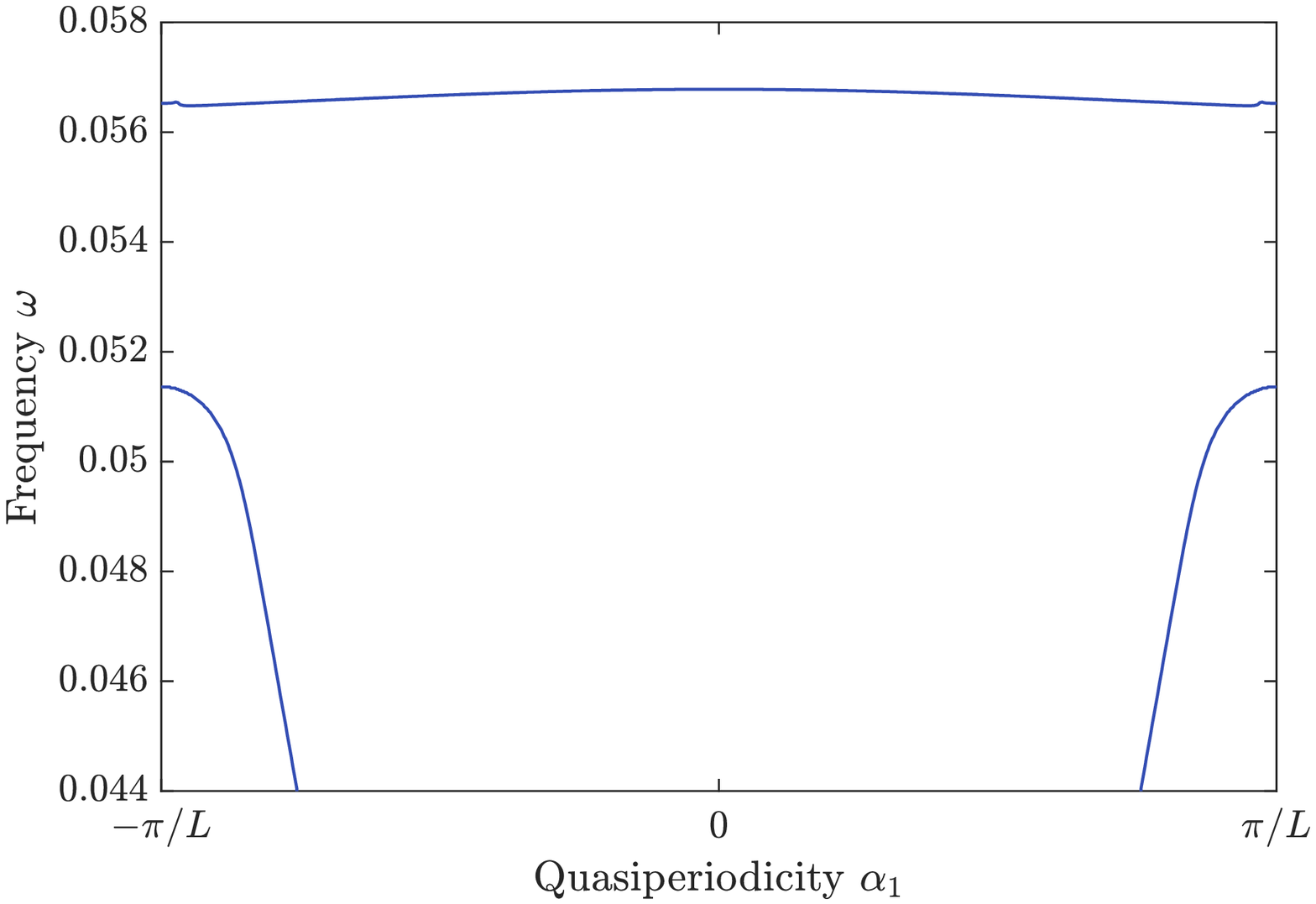}
		\caption{ (Dilute case)
			The full first two bands (left) and magnification of the band gap region (right). The resonator radius was $R=1$ with separations $d=12$ and $d'=42$, corresponding to a dilute crystal.} 
		\label{fig:bandgap_dilute}
		\includegraphics[height=5.0cm]{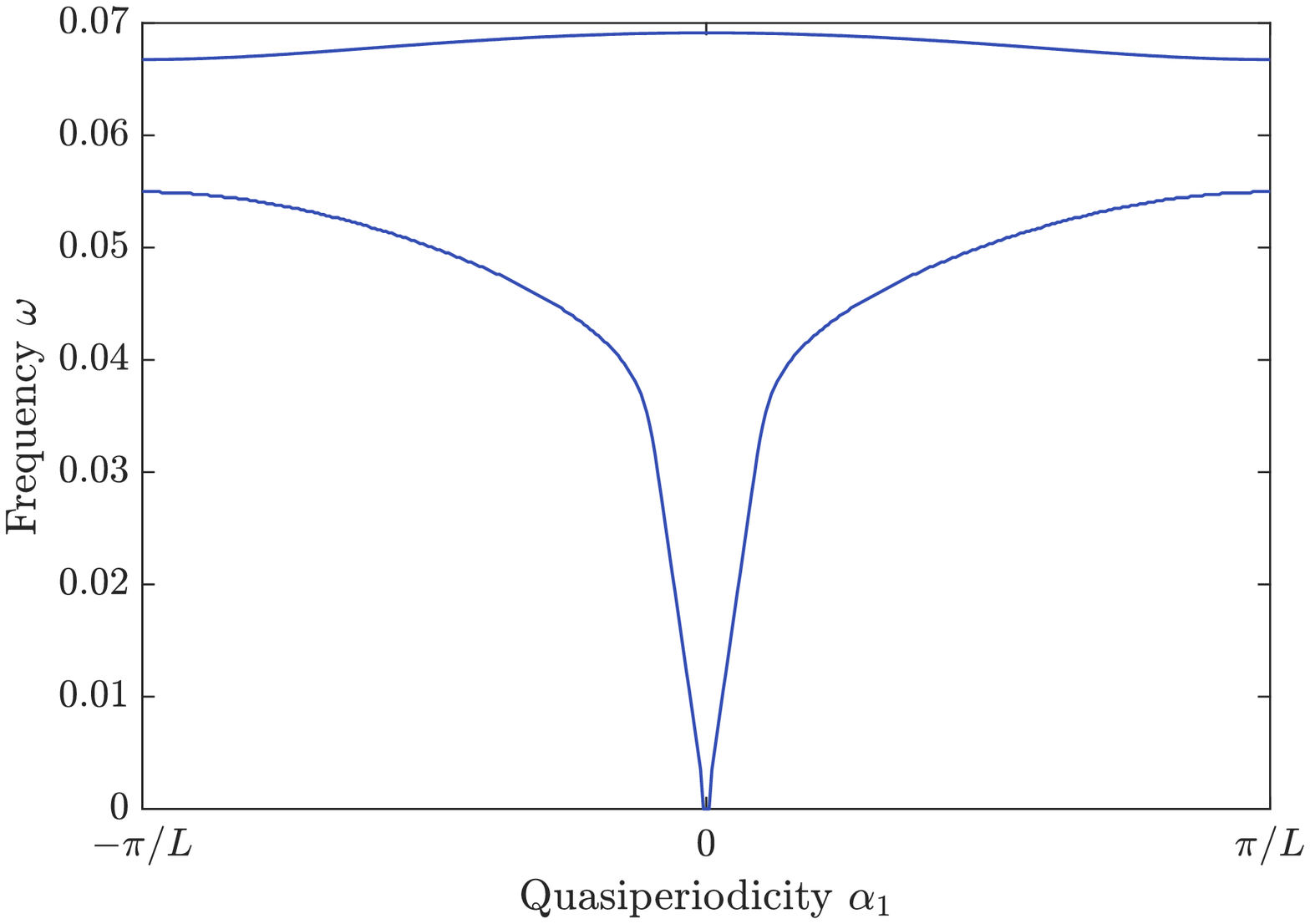}
		\hspace{0.1cm}
		\includegraphics[height=5.0cm]{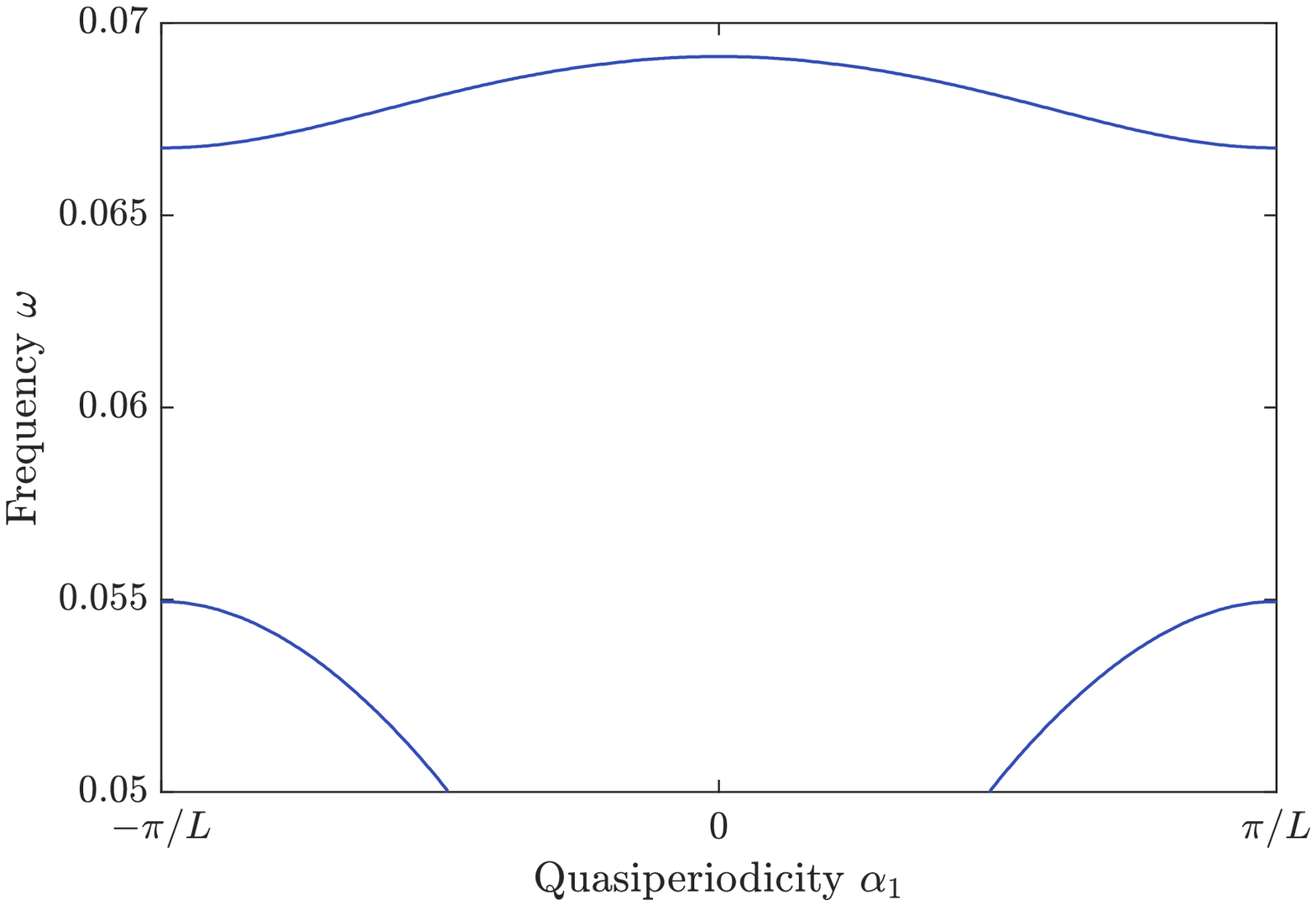}
		\caption{ (Non-dilute case)
			The full first two bands (left) and magnification of the band gap region (right). The resonator radius was $R=1$ with separations $d=3$ and $d' = 6$, corresponding to a non-dilute crystal. } 
		\label{fig:bandgap_ndilute}
	\end{center}
\end{figure*}

\subsection{Numerical computations} \label{sec:inf_numerics}
The band structure and the Bloch eigenmodes were computed using the multipole expansion method derived in Appendix \ref{app:multipole}. This relies on the assumption that the resonators are spherical, which is a special case of the more general geometry considered above. As shown in Theorem \ref{thm:mode_approx}, the Bloch eigenmodes are asymptotically constant on each domain $D_i$ and hence accurate and efficient computations can be achieved by approximating functions by only the first term in the multipole expansion.

All the numerical computations in this paper were performed for the example of acoustic waves being scattered by air bubbles in water. This is a classic example of subwavelength resonance, where the resonant frequency of a single bubble is known as the \emph{Minnaert resonance} \cite{first,minnaert}. Throughout this paper, we use $\delta = 10^{-3}$, which is roughly the density contrast between air and water. We also use the material parameters $d=12, d' = 42, L=54$ to exemplify a dilute crystal, and parameters $d=3, d'=6, L=9$ to exemplify a non-dilute crystal. 

\subsubsection{Band structure}
Figures \ref{fig:bandgap_dilute} and \ref{fig:bandgap_ndilute} show the band structure in a dilute and a non-dilute crystal, respectively. In the dilute case, the sound cone occupies a majority of the Brillouin zone, which is expected due to the lower interactions between the resonators. Both crystals show the opening of a band gap.

\subsubsection{Bloch eigenmodes}
Figure \ref{fig:mode} shows the first two Bloch eigenmodes for the crystal at $\alpha = \pi/L$ in the cases $d<d'$ and $d>d'$. The band inversion is clearly seen: when $d>d'$ the monopole/dipole modes correspond to the second/first mode, respectively.  The band inversion property demonstrates the fact that the crystal has a non-zero Zak phase when $d>d'$. As $\alpha$ varies, the phase shift $\theta_\alpha$ between the values of the eigenmodes winds around 0, resulting in band inversion at some point $\alpha\in Y^*$.

\begin{figure*} [tbh]
%	\captionsetup[subfigure]{justification=centering}
	\begin{center}
		\begin{subfigure}[b]{0.2\linewidth} 
		\centering
		\includegraphics[trim={0.65cm 0 2.6cm 0},clip,height=4.5cm]{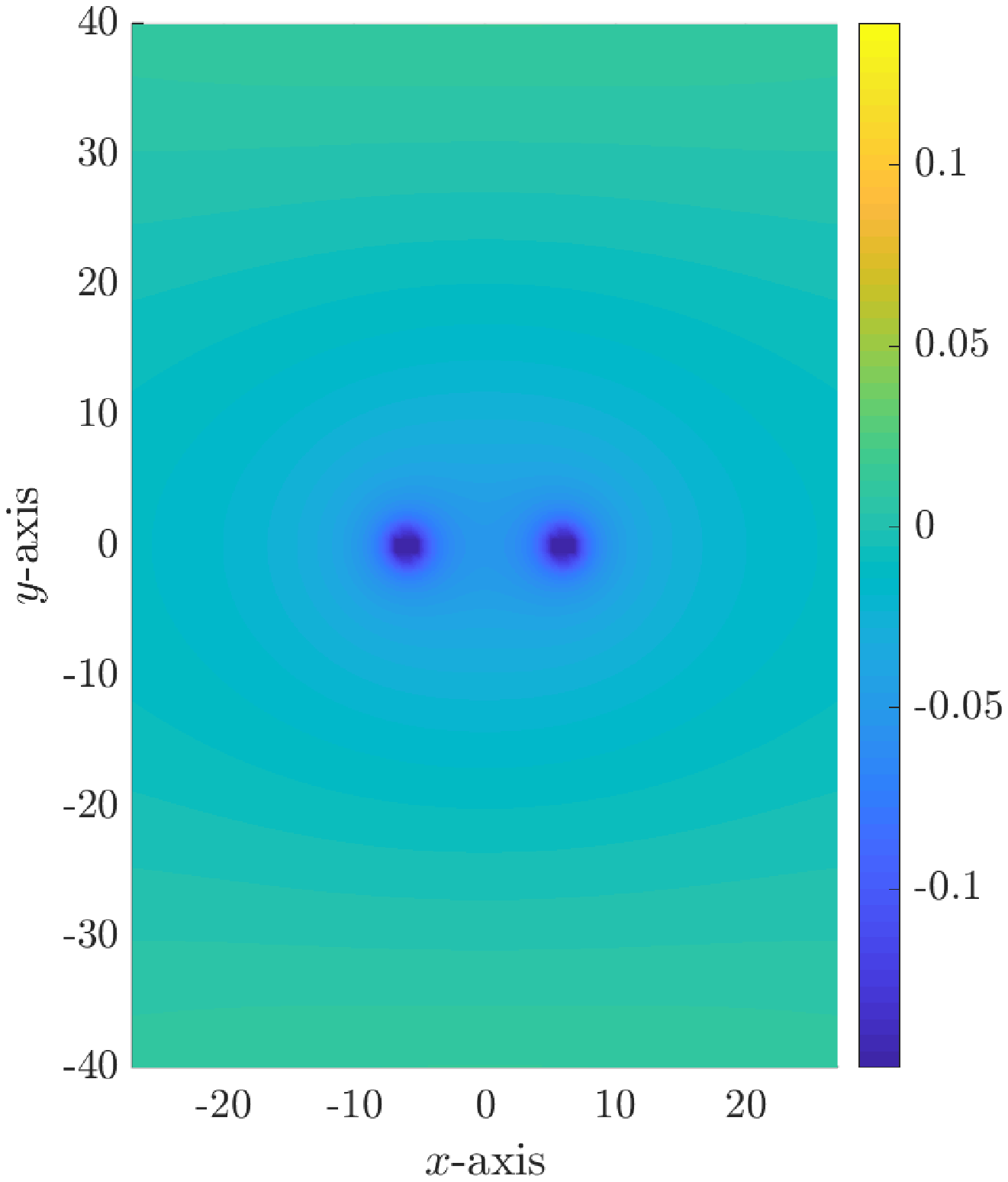}
		\caption{Case $d<d'$: 1\textsuperscript{st} eigenmode.}
		\end{subfigure}
		\hspace{0.6cm}
		\begin{subfigure}[b]{0.2\linewidth}
		\centering
		\includegraphics[trim={0.65cm  0 2.6cm 0},clip,height=4.5cm]{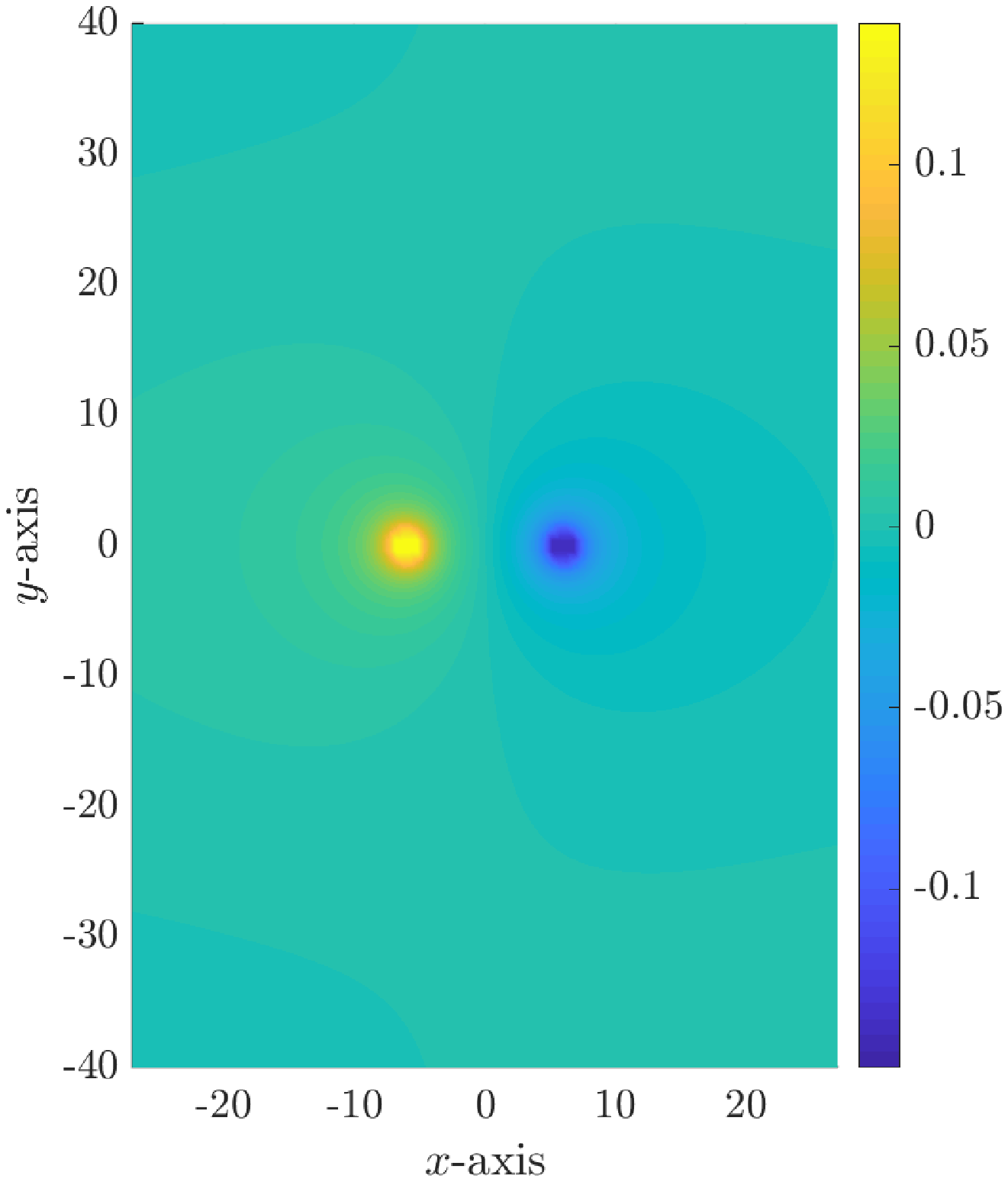}
%		\subcaptionsetup{justification=centering}
		\caption{Case $d<d'$: 2\textsuperscript{nd} eigenmode.}
		\end{subfigure}
	\hspace{0.8cm}
		\begin{subfigure}[b]{0.2\linewidth}\centering
		\includegraphics[trim={0.65cm 0 2.6cm 0},clip,height=4.5cm]{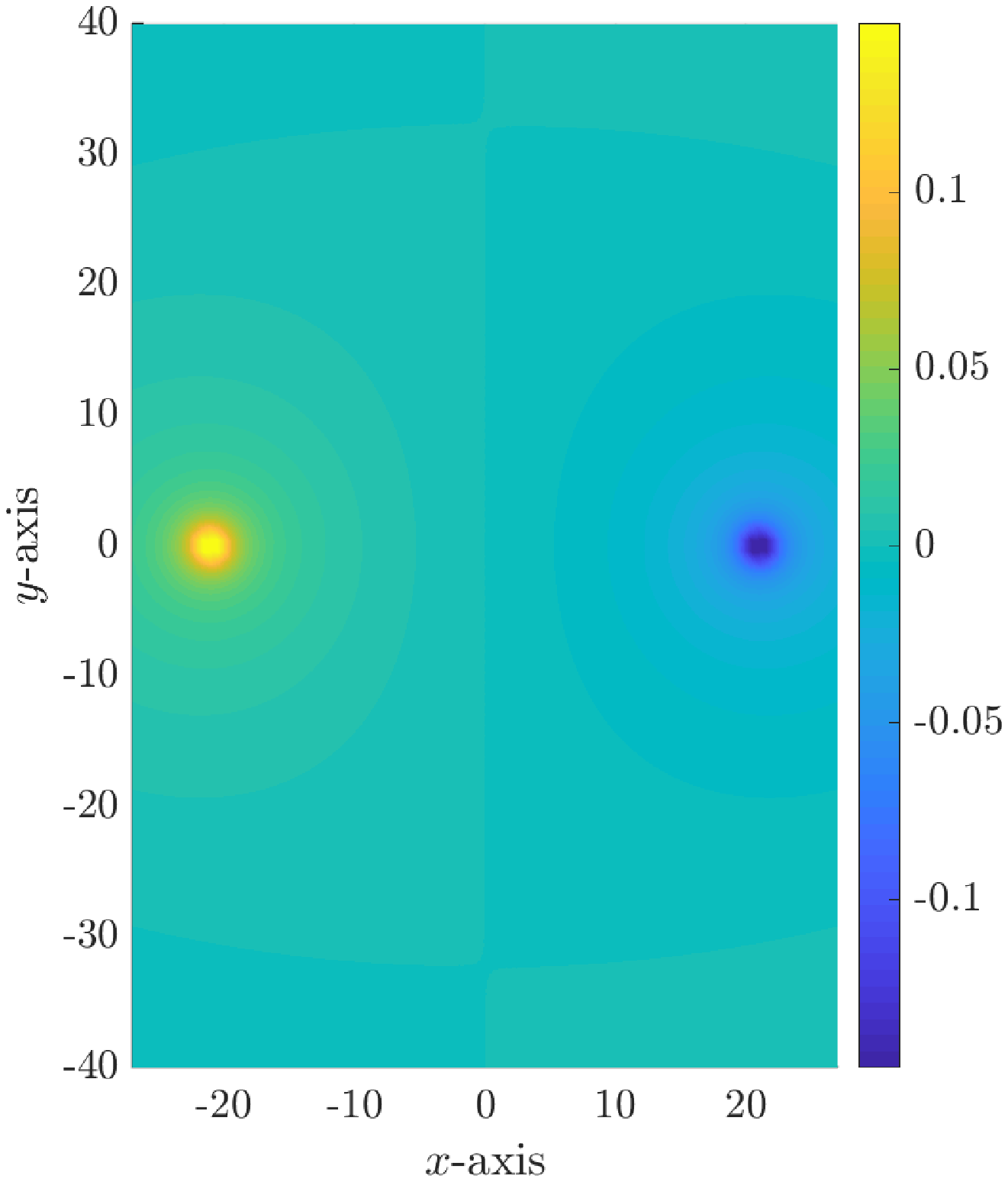}
		\caption{Case $d>d'$: 1\textsuperscript{st} eigenmode.}
		\end{subfigure}
	\hspace{0.6cm}
		\begin{subfigure}[b]{0.2\linewidth}\centering
		\includegraphics[trim={0.65cm 0 0 0},clip,height=4.5cm]{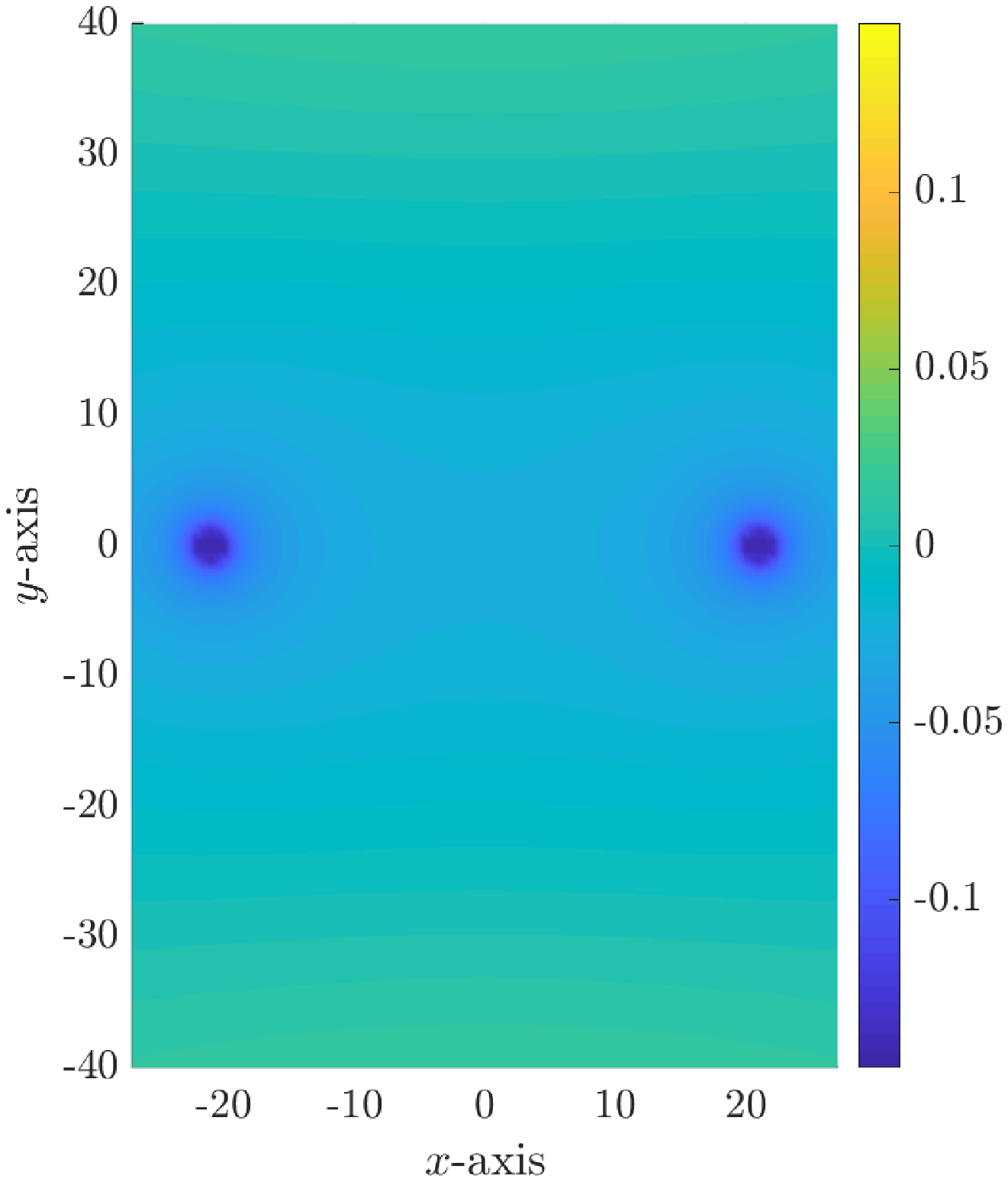}
		\caption{Case $d>d'$: 2\textsuperscript{nd} eigenmode.}
		\end{subfigure}
		\caption{The first and second Bloch eigenmodes at $\alpha = \pi/L$ for the two cases $d<d'$ and $d>d'$ in the dilute regime. The case $d>d'$ exhibits band inversion: the first eigenmode behaves as a dipole while the second mode behaves as a monopole.} 
		\label{fig:mode}
	\end{center}
\end{figure*}

\section{Finite chains of subwavelength resonators} \label{sec:finite_crystals}
In this section, we will study a finite chain of resonators which has been carefully designed to support topologically protected edge modes. Specifically, we assume that $D$ has the form
\begin{equation} \label{finite_form}
D = \left(\bigcup_{n=-M}^{M} D_0 + n(d+d',0,0)  \right) \bigcup \left( \bigcup_{n=-M+1}^M D_0 + n(d+d',0,0) - (d',0,0)\right),
\end{equation}
where $D_0$ is a single repeating resonator. In other words, $D$ consists of an odd number  $N$ of identical resonators ($N = 4M+1$) with alternating distances $d$ and $d'$ that are swapped at the middle resonator. An example of such a configuration is depicted in \Cref{fig:finite}. This structure is based on the intuition that if one joins together two chains with different topological properties, a protected edge mode will occur at the interface (this is the principle of bulk-boundary correspondence). In \Cref{fig:finite} it is shown how on either side of the central resonator (which constitutes the ``edge'') one can associate each successive pair of resonators with dimers belonging to infinite chains that have different Zak phases.

%remark \ref{rmk:bulkbdy}, and can be viewed as two finite SSH chains, with different Zak phase, joined at an edge in the centre of the system. An additional resonator is placed at the end of the chain, to avoid edge modes at the edge of the full system.

	\begin{figure}[t]
	\centering
	\begin{tikzpicture}[scale=1.1]
	\begin{scope}
	\draw (0.65,0) coordinate (start1) circle (8pt);
%	\shade[ball color = gray!50, opacity = 0.4](0.65,0) circle (8pt);
	%\draw[fill] (0.2,0) circle (0.5pt) node[yshift=-8pt]{$-x_1$};
	%\draw[fill] (1.1,0) circle (0.5pt) node[yshift=-8pt]{$x_1$};
	\draw[<->, opacity=0.5] (0.65,0) -- (2.05,0) node[pos=0.5, yshift=-7pt,]{$d'$};
	\draw[<->, opacity=0.5] (-0.75,0) -- (0.65,0) node[pos=0.5, yshift=-7pt,]{$d'$};
	%\draw[dashed, opacity=0.5] (0.2,-0.2) -- (0.6,-0.2) coordinate (end);
	%\pic [draw, ->, "$\theta$", angle eccentricity=1.5] {angle = end--start1--start2};
	\end{scope}
	
	\begin{scope}[xshift=-1.85cm]
	\draw (0.2,0) circle (8pt);
	\draw[<->, opacity=0.5] (0.2,0) -- (1.1,0) node[pos=0.5, yshift=-7pt,]{$d$};
	\begin{scope}[xshift = 1.3cm]
	\draw (-0.2,0) circle (8pt);
	\end{scope};
	\end{scope};
	
	\begin{scope}[xshift=1.85cm]
	\draw[<->, opacity=0.5] (0.2,0) -- (1.1,0) node[pos=0.5, yshift=-7pt,]{$d$};
	\draw (0.2,0) circle (8pt);		
	%\draw[fill] (0.2,0) circle (0.5pt);
	\begin{scope}[xshift = 1.3cm]
	\draw (-0.2,0) circle (8pt);
	\end{scope};
	\end{scope};
	
	\begin{scope}[xshift=4.05cm]
	\draw (0.2,0) circle (8pt);	
	%\draw[<->, opacity=0.5] (0.2,0) -- (1.1,0) node[pos=0.5, yshift=-5pt,]{$d$};	
%	\draw[opacity=0.5] (0.65,1) -- (0.65,-1);
	\begin{scope}[xshift = 1.3cm]
	\draw (-0.2,0) circle (8pt);
	\end{scope};
	\end{scope};
	
	\begin{scope}[xshift=6.25cm]
	\draw (0.2,0) circle (8pt);	
%	\draw[opacity=0.5] (0.65,1) -- (0.65,-1) node[above left]{$\phi^z = \pi$};
	%\draw[<->, opacity=0.5] (0.2,0) -- (1.1,0) node[pos=0.5, yshift=-5pt,]{$d$};
	\begin{scope}[xshift = 1.3cm]
	\draw (-0.2,0) circle (8pt);
	\end{scope};
	\end{scope};
	
	\begin{scope}[xshift=-4.05cm]
	\draw (0.2,0) circle (8pt);
	%\draw[<->, opacity=0.5] (0.2,0) -- (1.1,0) node[pos=0.5, yshift=-5pt,]{$d$};
	\begin{scope}[xshift = 1.3cm]
	\draw (-0.2,0) circle (8pt);
	\end{scope};
	\end{scope};	
	
	\begin{scope}[xshift=-6.25cm]
	\draw (0.2,0) circle (8pt);
%	\draw[opacity=0.5] (-0.5,1) -- (-0.5,-1);
%	\draw[opacity=0.5]  (1.8,1) -- (1.8,-1) node[above left]{$\phi^z = 0$};
	%\draw[<->, opacity=0.5] (0.2,0) -- (1.1,0) node[pos=0.5, yshift=-5pt,]{$d$};
	\begin{scope}[xshift = 1.3cm]
	\draw (-0.2,0) circle (8pt);
	\end{scope};
	\end{scope};
		
%	\begin{scope}[xshift=-6.25cm]
%	\draw [decorate,opacity=0.5,decoration={brace,amplitude=10pt},xshift=-4pt,yshift=0pt]
%	(-0.25,0.5) -- (1.8,0.5) node [black,midway,xshift=-0.6cm]{};	
%	\end{scope}
	
	\begin{scope}[xshift=-4.05cm]
	\draw [decorate,opacity=0.5,decoration={brace,amplitude=10pt},xshift=-4pt,yshift=0pt]
	(-0.25,0.5) -- (1.8,0.5) node [black,midway,xshift=-0.6cm]{};	
	\end{scope}
	
	\begin{scope}[xshift=-1.85cm]
	\draw [decorate,opacity=0.5,decoration={brace,amplitude=10pt},xshift=-4pt,yshift=0pt]
	(-0.25,0.5) -- (1.8,0.5) node [black,midway,xshift=-0.6cm]{};	
	\end{scope}
	
	\begin{scope}[xshift=0.72cm]
	\draw [decorate,opacity=0.5,decoration={brace,amplitude=10pt},xshift=-4pt,yshift=0pt]
	(-0.25,0.5) -- (1.8,0.5) node [black,midway,xshift=-0.6cm]{};	
	\end{scope}
	
	\begin{scope}[xshift=2.92cm]
	\draw [decorate,opacity=0.5,decoration={brace,amplitude=10pt},xshift=-4pt,yshift=0pt]
	(-0.25,0.5) -- (1.8,0.5) node [black,midway,xshift=-0.6cm]{};	
	\end{scope}
	
%	\begin{scope}[xshift=5.12cm]
%	\draw [decorate,opacity=0.5,decoration={brace,amplitude=10pt},xshift=-4pt,yshift=0pt]
%	(-0.25,0.5) -- (1.8,0.5) node [black,midway,xshift=-0.6cm]{};	
%	\end{scope}
	
	\begin{scope}[xshift=-4.05cm-4pt]
	\draw[opacity=0.5]    (0.775,0.9) to[out=90,in=-100] (1.54,1.5);
	\end{scope}
	
	\begin{scope}[xshift=-1.85cm-4pt]
	\draw[opacity=0.5]    (0.775,0.9) to[out=100,in=-80] (-0.64,1.5);
	\end{scope}
	
%	\begin{scope}[xshift=-1.85cm-4pt]
%	\draw[opacity=0.5]    (0.775,0.9) to[out=100,in=-45] (-2.25,1.5);
%	\end{scope}

	\begin{scope}[xshift=0.72cm-4pt]
	\draw[opacity=0.5]    (0.775,0.9) to[out=90,in=-100] (2.04,1.5);
	\end{scope}

	\begin{scope}[xshift=2.92cm-4pt]
	\draw[opacity=0.5]    (0.775,0.9) to[out=100,in=-80] (-0.14,1.5);
	\end{scope}
	
	\begin{scope}[xshift=-2.4cm]
	\draw [decorate,opacity=0.5,decoration={brace,amplitude=10pt,mirror},xshift=-4pt,yshift=0pt]
	(-1.125,1.9) -- (0.925,1.9) node [black,midway,xshift=-0.6cm]{};
	\begin{scope}[xshift=-4pt]
	\draw[opacity=0.5, dotted]    (-1.25,2) -- (-1.25,2.7);
	\draw[opacity=0.5, dotted]    (1.05,2) -- (1.05,2.7);
	\end{scope}
	\begin{scope}[xshift = -4pt-0.75cm]
	\draw[opacity=0.5] (0.2,2.3) circle (8pt);	
	\node[opacity=0.5] at (0.65,2.9) {$\varphi_j^z = 0$};
%	\draw[opacity=0.5] (0.65,1) -- (0.65,-1) node[above left]{$\phi^z = \pi$};
	%\draw[<->, opacity=0.5] (0.2,0) -- (1.1,0) node[pos=0.5, yshift=-5pt,]{$d$};
	\begin{scope}[xshift = 1.3cm]
	\draw[opacity=0.5] (-0.2,2.3) circle (8pt);
	\end{scope}
	\end{scope}
	\end{scope}
	
	\begin{scope}[xshift=2.87cm]
	\draw [decorate,opacity=0.5,decoration={brace,amplitude=10pt,mirror},xshift=-4pt,yshift=0pt]
	(-1.125,1.9) -- (0.925,1.9) node [black,midway,xshift=-0.6cm]{};
	\begin{scope}[xshift=-4pt]
	\draw[opacity=0.5, dotted]    (-1.25,2) -- (-1.25,2.7);
	\draw[opacity=0.5, dotted]    (1.05,2) -- (1.05,2.7);
	\end{scope}
	\begin{scope}[xshift = -4pt-0.75cm]
	\draw[opacity=0.5] (-0.05,2.3) circle (8pt);	
	\node[opacity=0.5] at (0.65,2.9) {$\varphi_j^z = \pi$};
	%	\draw[opacity=0.5] (0.65,1) -- (0.65,-1) node[above left]{$\phi^z = \pi$};
	%\draw[<->, opacity=0.5] (0.2,0) -- (1.1,0) node[pos=0.5, yshift=-5pt,]{$d$};
	\begin{scope}[xshift = 1.3cm]
	\draw[opacity=0.5] (0.05,2.3) circle (8pt);
	\end{scope}
	\end{scope}
	\end{scope}
	\end{tikzpicture}
	\caption{Two-dimensional cross-section of a finite dimer chain with 13 resonators, heuristically showing how to identify unit cells with different Zak phases on either side of the edge.} \label{fig:finite}
\end{figure}
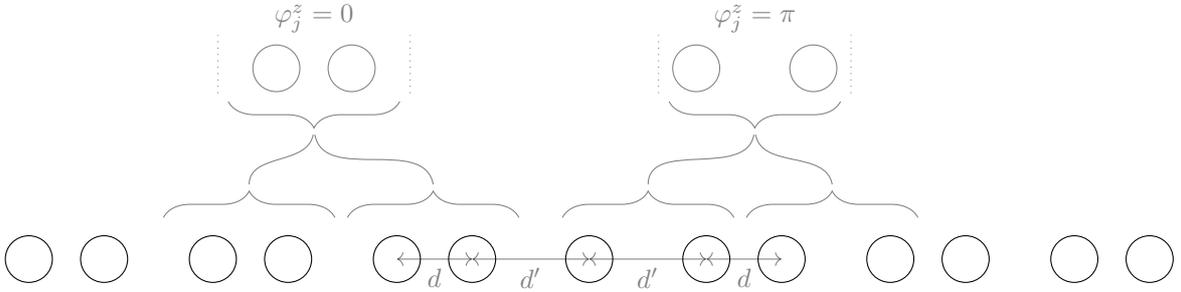

We model wave propagation in the crystal $D$ by the Helmholtz problem

\begin{equation} \label{eq:scattering_finite}
\left\{
\begin{array} {ll}
\ds \Delta {u}+ \omega^2 {u}  = 0 \quad &\text{in } \R^3 \setminus \p D, \\
\nm
\ds  {u}|_{+} -{u}|_{-}  =0  \quad &\text{on } \partial D, \\
\nm
 \ds  \delta \frac{\partial {u}}{\partial \nu} \bigg|_{+} - \frac{\partial {u}}{\partial \nu} \bigg|_{-} =0 \quad& \text{on } \partial D, \\
\nm
 \ds |x| \left(\tfrac{\p}{\p|x|}-\iu \omega\right)u \to 0 
 &\text{as } {|x|} \rightarrow \infty.
\end{array}
\right.
\end{equation}

\subsection{Integral equation formulation of the problem}
The solution $u$ of \eqref{eq:scattering_finite} can be represented as
\begin{equation*} \label{eq:helm-solution}
u =\mathcal{S}_{D}^{\omega} [\Psi],
\end{equation*}
for some density $\Psi \in  L^2(\p D)$. Then, analogous to the approach used in the quasiperiodic case in \Cref{sec:formulation_quasip}, the jump relations can be used to show that~\eqref{eq:scattering_finite} is equivalent to the boundary integral equation
\begin{equation}  \label{eq:boundary}
\mathcal{A}(\omega, \delta)[\Psi] =0,  
\end{equation}
where
$$
\mathcal{A}(\omega, \delta) := -\lambda I + \mathcal{K}_D^{\omega,*}, \quad \lambda := \frac{1+\delta}{2(1-\delta)}.
$$

\subsection{Capacitance matrix}\label{subsec:cap}
Similar to the quasiperiodic case in \Cref{subsec:cap_quasi}, the resonant frequencies and eigenmodes of the finite chain can be expressed in terms of the capacitance matrix. Let $V_j, j=1,...,N$ be the solution to
\begin{equation} \label{eq:V}
\begin{cases}
\ds \Delta V_j =0 \quad &\mbox{in}~R^3\setminus D,\\
\ds V_j = \delta_{ij} &\mbox{on}~\partial D_i,\\
\ds V_j(x) = O\left(\tfrac{1}{|x|}\right) & \text{as } |x|\rightarrow \infty.
\end{cases}
\end{equation}
We define the capacitance coefficients matrix $C=(C_{ij})$ by
\begin{equation} \label{eq:finite_capacitance}
C_{ij} := \int_{\R^3\setminus D}\nabla V_i \cdot\nabla V_j  \dx x,\quad i,j=1,...,N.
\end{equation}
Once again, we can use the jump conditions to show that the capacitance coefficients $C_{ij}$ are also given by
$$ C_{ij} = - \int_{\partial D_i} \psi_j \dx\sigma,\quad i,j=1,...,N,$$
where the functions $\psi_j$ are defined by
$$\psi_j = (\S_D^{0})^{-1}[\chi_{\p D_j}].$$

Observe that as $\delta \rightarrow 0$, we have $\lambda \rightarrow \tfrac{1}{2}^+$. Then, using Gohberg-Sigal theory for operator-valued functions \cite{AKL, gohberg2009holomorphic} we have the following lemma.

\begin{lem}
	For any $\delta$ sufficiently small there are, up to multiplicity, $N$ characteristic values
	$\omega_j= \omega_j(\delta), j = 1,...,N$, to the operator-valued analytic function 
	$\mathcal{A}(\omega, \delta)$
	such that $\omega_j(0)=0$ for all $j$ and 
	$\omega_j$ depends on $\delta$ continuously.
\end{lem}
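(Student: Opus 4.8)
The plan is to apply Gohberg--Sigal theory to the operator-valued analytic function $\mathcal{A}(\omega,\delta)$, treating $\delta$ as a perturbation parameter around $\delta = 0$. First I would examine the limiting operator $\mathcal{A}(\omega, 0) = -\tfrac{1}{2}I + \mathcal{K}_D^{\omega,*}$ at $\omega = 0$, which becomes $-\tfrac{1}{2}I + \mathcal{K}_D^{0,*}$. The key classical fact is that $0$ is a characteristic value of $\omega \mapsto \mathcal{A}(\omega,0)$ at $\omega=0$ with multiplicity exactly $N$: the kernel of $-\tfrac{1}{2}I + \mathcal{K}_D^{0,*}$ is $N$-dimensional, spanned by the densities $\psi_j = (\mathcal{S}_D^0)^{-1}[\chi_{\partial D_j}]$ (equivalently, the normal-derivative jumps of the harmonic functions $V_j$ solving \eqref{eq:V}), one for each connected component $D_j$ of $D$. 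This reflects the elementary fact that $-\tfrac{1}{2}I + \mathcal{K}_D^{0,*}$ annihilates precisely the boundary charge distributions producing zero interior field and locally constant exterior behaviour on each component.

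Next I would verify the analyticity hypotheses needed to invoke the generalized argument principle (the counting theorem) of Gohberg--Sigal for operator-valued functions \cite{AKL, gohberg2009holomorphic}. The map $\omega \mapsto \mathcal{K}_D^{\omega,*}$ is analytic in $\omega$ in a neighbourhood of the origin, since $G^{\omega}$ depends analytically on $\omega$ through $e^{\iu\omega|x-y|}$; and $\mathcal{A}(\omega,\delta)$ is jointly analytic in $(\omega,\delta)$ because $\lambda = \tfrac{1+\delta}{2(1-\delta)}$ is analytic near $\delta = 0$. Since $\mathcal{A}(\omega, 0)$ is a Fredholm operator of index zero (it is a compact perturbation of $-\tfrac12 I$), I can choose a small contour $\gamma$ in the complex $\omega$-plane enclosing only the characteristic value $0$, such that $\mathcal{A}(\omega, 0)$ is invertible on $\gamma$. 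The total multiplicity enclosed, computed via the trace of the logarithmic derivative $\tfrac{1}{2\pi\iu}\int_\gamma \operatorname{tr}\big(\mathcal{A}(\omega,0)^{-1}\tfrac{\partial}{\partial\omega}\mathcal{A}(\omega,0)\big)\dx\omega$, equals $N$.

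I would then apply the stability part of Gohberg--Sigal theory: because $\mathcal{A}(\omega,\delta) \to \mathcal{A}(\omega,0)$ uniformly on $\gamma$ as $\delta \to 0$, for all sufficiently small $\delta$ the operator $\mathcal{A}(\omega,\delta)$ remains invertible on $\gamma$, and the total multiplicity of its characteristic values enclosed by $\gamma$ is preserved, hence still equal to $N$. This yields exactly $N$ characteristic values $\omega_j(\delta)$ (counted with multiplicity) inside $\gamma$, all converging to $0$ as $\delta \to 0$, which gives $\omega_j(0) = 0$. The continuous dependence on $\delta$ likewise follows from the continuity statement in the Gohberg--Sigal framework, since the characteristic values vary continuously with the analytic parameter.

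The main obstacle I anticipate is establishing cleanly that the multiplicity of the characteristic value at $\omega = 0$ is exactly $N$ (and not merely at least $N$), which requires controlling the low-frequency expansion of $\mathcal{K}_D^{\omega,*}$ and showing no additional characteristic values accumulate at the origin; in particular one must confirm that the algebraic multiplicity, accounting for the order of vanishing in $\omega$, matches the $N$-dimensional geometric kernel. A secondary technical point is the well-definedness of the relevant traces and the Fredholm property uniformly along $\gamma$, but these follow from the compactness of $\mathcal{K}_D^{\omega,*}$ on the smooth boundary $\partial D$. The crux is thus the $N$-dimensionality of $\ker(-\tfrac12 I + \mathcal{K}_D^{0,*})$, which is standard potential theory given the $N$ disjoint smooth components, so I expect the argument to go through without serious difficulty once that count is pinned down.
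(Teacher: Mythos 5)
Your proposal takes exactly the same route as the paper, which proves this lemma simply by observing that $\lambda \to \tfrac{1}{2}^{+}$ as $\delta \to 0$ and invoking Gohberg--Sigal theory for the operator-valued analytic function $\mathcal{A}(\omega,\delta)$; your write-up correctly supplies the standard details, namely the $N$-dimensional kernel of $-\tfrac{1}{2}I + \mathcal{K}_D^{0,*}$ spanned by the equilibrium densities $\psi_j$ of the $N$ components and the generalized Rouch\'e argument preserving the enclosed multiplicity for small $\delta$. The one subtlety you flag at the end (matching the algebraic multiplicity at $\omega=0$ to the geometric kernel, given that the low-frequency expansion of $\mathcal{K}_D^{\omega,*}$ starts at order $\omega^2$) is likewise delegated by the paper to the cited references, so your account is at the same level of detail as the original.
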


The following theorem, proved in \cite{doublenegative}, shows that the eigenvalues of $C$ determine the resonance frequencies of the finite structure.
\begin{thm} \label{thm:char_approx_finite}
	The characteristic values $\omega_j=\omega_j(\delta),~j=1,...,N$, of $\mathcal{A}(\omega,\delta)$ can be approximated as
	$$ \omega_j= \sqrt{\frac{\delta \lambda_j }{|D_0|}}  + O(\delta),$$
	where $\lambda_j,~j=1,...,N$, are eigenvalues of the capacitance matrix $C$ and $|D_0|$ is the volume of a single resonator.
\end{thm}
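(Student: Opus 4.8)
The plan is to realise the characteristic values of $\mathcal{A}(\omega,\delta)=-\lambda I+\mathcal{K}_D^{\omega,*}$ as perturbations of the $N$-fold degenerate characteristic value sitting at $(\omega,\delta)=(0,0)$, and to reduce the singularity condition to a finite-dimensional matrix eigenvalue problem governed by the capacitance matrix $C$. The first ingredient is the small-frequency expansion of the Neumann--Poincar\'e operator. Writing $G^\omega(x,y)=G^0(x,y)-\tfrac{\iu\omega}{4\pi}+\tfrac{\omega^2}{8\pi}|x-y|+O(\omega^3)$ and taking the normal derivative, the constant first-order term has vanishing normal derivative and drops out, so
\[
\mathcal{K}_D^{\omega,*}=\mathcal{K}_D^{0,*}+\omega^2\mathcal{K}_D^{(2),*}+O(\omega^3),\qquad \mathcal{K}_D^{(2),*}[\psi](x)=\frac{1}{8\pi}\int_{\p D}\frac{\p}{\p\nu_x}|x-y|\,\psi(y)\dx\sigma(y).
\]
Together with $\lambda=\tfrac12+\delta+O(\delta^2)$ this gives $\mathcal{A}(\omega,\delta)=\bigl(-\tfrac12 I+\mathcal{K}_D^{0,*}\bigr)+\omega^2\mathcal{K}_D^{(2),*}-\delta I+(\text{higher order})$.

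Second, I would pin down the kernel and cokernel of the unperturbed operator $\mathcal{A}_0:=-\tfrac12 I+\mathcal{K}_D^{0,*}$. Since $\mathcal{S}_D^0[\psi_j]=V_j$ is constant ($=\delta_{ij}$) on each $D_i$, its interior normal derivative vanishes, so the jump relation forces $\mathcal{K}_D^{0,*}[\psi_j]=\tfrac12\psi_j$; hence $\ker\mathcal{A}_0=\operatorname{span}\{\psi_1,\dots,\psi_N\}$ is exactly $N$-dimensional. Dually, $\mathcal{K}_D^0[\chi_{\p D_i}]=\tfrac12\chi_{\p D_i}$, so $\ker\mathcal{A}_0^*$ is spanned by the indicator functions $\chi_{\p D_i}$ (the component-wise constants), and the solvability functional on the range is $f\mapsto\int_{\p D_i}f\dx\sigma$. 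Invoking Gohberg--Sigal theory for the operator-valued analytic function $\mathcal{A}(\omega,\delta)$ (as in the preceding lemma and \cite{gohberg2009holomorphic}), the characteristic values near $0$ correspond to the zeros of the determinant of the $N\times N$ matrix produced by a Lyapunov--Schmidt reduction onto $\ker\mathcal{A}_0$, the transverse component of $\Psi$ being of relative size $O(\omega^2+\delta)=O(\delta)$ and entering only at higher order.

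Third, the reduced matrix is computed by testing $\mathcal{A}(\omega,\delta)[\Psi]=0$ against $\chi_{\p D_i}$ with $\Psi\approx\sum_j x_j\psi_j$. Using $\int_{\p D_i}\psi_j\dx\sigma=-C_{ij}$ and $\mathcal{K}_D^{0,*}[\psi_j]=\tfrac12\psi_j$, the $-\lambda I$ and $\mathcal{K}_D^{0,*}$ contributions combine into $(\lambda-\tfrac12)(C\mathbf x)_i=\delta(C\mathbf x)_i+O(\delta^2)$. The frequency term is controlled by the key identity
\[
\int_{\p D_i}\mathcal{K}_D^{(2),*}[\psi_j]\dx\sigma=-\int_{D_i}V_j\dx x=-|D_0|\,\delta_{ij},
\]
which follows from the divergence theorem together with $\Delta_x|x-y|=2|x-y|^{-1}$ (giving $\int_{\p D_i}\p_{\nu_x}|x-y|\dx\sigma(x)=\int_{D_i}2|x-y|^{-1}\dx x$), Fubini, the relation $\tfrac{1}{4\pi|x-y|}=-G^0(x,y)$, and $\mathcal{S}_D^0[\psi_j]=V_j$ (constant on each component). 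Assembling the three contributions yields the generalized eigenvalue problem $\delta\,C\mathbf x=\omega^2|D_0|\,\mathbf x$ at leading order, whose solutions are $\omega^2=\delta\lambda_j/|D_0|$; taking square roots gives $\omega_j=\sqrt{\delta\lambda_j/|D_0|}+O(\delta)$.

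I expect the main obstacle to be the rigorous error control rather than this formal computation. One must justify the Lyapunov--Schmidt reduction uniformly for small $\omega,\delta$ (invertibility of $\mathcal{A}_0$ restricted to the complement of its kernel, and analyticity in $(\omega,\delta)$ of the resulting finite-dimensional determinant), and then verify that the neglected $O(\omega^3)$ and $O(\delta^2)$ terms, together with the transverse part of $\Psi$, shift each root $\omega_j^2$ only at order $\delta^2$, so that $\omega_j$ carries the claimed $O(\delta)$ error. This is precisely the argument established in \cite{doublenegative}; note that neither symmetry assumption \eqref{resonator_symmetry} nor \eqref{dimer_symmetry} is needed, the statement holding for the general finite cluster $D$.
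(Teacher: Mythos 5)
The paper gives no proof of this theorem; it is quoted directly from \cite{doublenegative}, and your argument is precisely the standard one used there: expand $\mathcal{K}_D^{\omega,*}$ and $\lambda$ in $\omega$ and $\delta$, identify $\ker\bigl(-\tfrac12 I+\mathcal{K}_D^{0,*}\bigr)=\operatorname{span}\{\psi_1,\dots,\psi_N\}$ with cokernel spanned by the $\chi_{\p D_i}$, and reduce via Gohberg--Sigal/Lyapunov--Schmidt to the eigenvalue problem $\delta C\mathbf{x}=\omega^2|D_0|\mathbf{x}$. Your key computations (the sign conventions $\int_{\p D_i}\psi_j\dx\sigma=-C_{ij}$, $\lambda-\tfrac12=\delta+O(\delta^2)$, and the identity $\int_{\p D_i}\mathcal{K}_D^{(2),*}[\psi_j]\dx\sigma=-|D_0|\delta_{ij}$ via $\Delta_x|x-y|=2|x-y|^{-1}$ and $\mathcal{S}_D^0[\psi_j]=\delta_{ij}$ on $D_i$) all check out, and the neglected terms are $O(\delta^{3/2})$ in the reduced equation, which indeed yields the stated $O(\delta)$ error after taking square roots.
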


%\begin{rmk}
%	The capacitance matrix approach can be considered to be analogous to the tight-binding approach in quantum mechanical settings, in the sense that it recasts the differential problem \eqref{eq:scattering_finite} as matrix eigenvalue problem, in which the $(i,j)$\textsuperscript{th} entry of the matrix \eqref{eq:finite_capacitance} represents the interactions between the $i$\textsuperscript{th} and $j$\textsuperscript{th} resonators in the array.
%\end{rmk}

\subsubsection{Nearest-neighbour approximation} \label{sec:finite_nearest_n}

Drawing on parallels to how SSH chains are studied in quantum mechanics, an appealing approach to approximating the problem of wave scattering by a finite system of subwavelength resonators is to consider a nearest-neighbour approximation. That is, to disregard long-range interactions between resonators, instead only considering the interactions between neighbouring elements. Mathematically, this means approximating the capacitance matrix \eqref{eq:finite_capacitance} by setting $C_{ij}=0$ if $|i-j|>1$, giving a tridiagonal matrix. Intuitively, one would expect that such an approach will only give a good estimate to the problem in the dilute regime.

%\subsubsection{Approximation estimates}

%It is intuitive that the tight-binding approach will give a good estimate to the problem in the dilute regime. %Mathematically, there are two reasons for this. The first is that in the dilute limit the diagonal entries of the capacitance matrix will converge to the capacitance of a single isolated resonator. This means that the required translation can be achieved by subtracting this constant value. The second reason is that, away from the diagonal, the entries of the capacitance matrix decay increasingly quickly as the system becomes more dilute. This is made precise with the following result.
%
%
%Here we consider a system of $N$ bubbles and the estimates for its associated capacitance matrix.
We wish to prove estimates on the extent to which a tight-binding approach can approximate the problem in the dilute regime. We consider a dilute system by rescaling the canonical domain $D_0$ in \eqref{finite_form} as $D_0=\epsilon B$, where $B$ is some connected domain that has size of order one.
%
%We denote the dilute system by writing each resonator $D_j$, $j=1,\ldots,N$, as $D_j = \epsilon B + z_j $ where $\epsilon$ is a small parameter, $z_j$ represents the position of each resonator and $B$ is some domain with size of order one. This is equivalent to taking $D_0=\epsilon B$ in \eqref{finite_form}. %, has the form
%\begin{equation} \label{eq:dilute_defn}
%D=\bigcup_{j=1}^N \left(\epsilon D_0 + z_j\right).
%\end{equation}
%
In this dilute regime, we are able to obtain an explicit representation of the capacitance matrix $C$ for the finite system \eqref{finite_form}. As in \Cref{subsec:cap_quasi}, we denote the capacitance of the fixed domain $B$ by $\textrm{Cap}_{B}$.

\begin{lem}\label{lem:cap_estim}
%	We assume the system $D$ consists of $N$ identical resonators of size of order $\epsilon$, i.e., $D=\cup_{j=1}^N \epsilon B + z_j$. We also assume the dilute limit, i.e., smallness of $\epsilon$. 
	Consider a dilute system of $N$ identical subwavelength resonators with size of order $\epsilon$, given by
	\begin{equation*}
		D=\bigcup_{j=1}^N \left(\epsilon B + z_j\right),
	\end{equation*}
	where $0<\epsilon\ll1$ and $z_j$ represents the fixed position of each resonator. In the limit as $\epsilon\rightarrow 0$ the asymptotic behaviour of the corresponding capacitance matrix is given by
%	Then we have the following asymptotics of the capacitance matrix  as :
	$$
	C_{ij} = 
	\begin{cases}
	\displaystyle \epsilon \mathrm{Cap}_B + O(\epsilon^3), &\quad i=j,\\
	\displaystyle -\frac{\epsilon^2(\mathrm{Cap}_B)^2}{4\pi|z_i - z_j|} + O(\epsilon^3), &\quad i\neq j.\\
	\end{cases}
	$$
\end{lem}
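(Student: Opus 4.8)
The plan is to compute the capacitance matrix $C_{ij} = -\int_{\p D_i}(\S_D^0)^{-1}[\chi_{\p D_j}]\dx\sigma$ by asymptotically inverting the single-layer potential $\S_D^0$ on the union of $N$ dilute resonators. This is the direct (non-quasiperiodic) analogue of \Cref{lem:cap_estim_quasi}, so I would mirror that proof almost verbatim, dropping all the Floquet summation over $m$. First I would write $\S_D^0$ as an $N\times N$ block operator whose $(i,j)$ entry is $\S_{D_j}^0\big|_{\p D_i}$, and split it as $\S_D^0 = S_I + S_{II}$, where $S_I$ is the block-diagonal part $\mathrm{diag}(\S_{D_1}^0,\dots,\S_{D_N}^0)$ and $S_{II}$ collects all off-diagonal blocks.

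The key estimates come from the scaling relation $\S_{\epsilon B}^0[\varphi]=\epsilon\S_B^0[\widetilde\varphi]$ and the far-field expansion of $G^0$. Working in the rescaled norms $\L,\H$ introduced in \Cref{subsec:cap_quasi} (so that $\|\varphi\|_\L = O(1)$ for rescaled densities), the diagonal blocks give $\|S_I\|_{\B(\L^N,\H^N)}\lesssim\epsilon$ and $\|S_I^{-1}\|_{\B(\H^N,\L^N)}\lesssim\epsilon^{-1}$. For the off-diagonal blocks I would Taylor-expand $G^0(x,y)$ for $x\in\p D_i$, $y\in\p D_j$ around the resonator centres $z_i,z_j$, obtaining
\begin{equation*}
\S_{D_j}^0\big|_{\p D_i}[\varphi](x) = -\frac{\chi_{\p D_i}(x)}{4\pi|z_i-z_j|}\int_{\p D_j}\varphi\dx\sigma + O(\epsilon^3\|\varphi\|_\L),
\end{equation*}
exactly as in \eqref{eq:single_0_formula}, using that $\int_{\p D_j}\varphi\dx\sigma = O(\epsilon^2\|\varphi\|_\L)$; this yields $\|S_{II}\|_{\B(\L^N,\H^N)}\lesssim\epsilon^2$. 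Since $S_I^{-1}S_{II}=O(\epsilon)$ in operator norm, a Neumann series gives $(\S_D^0)^{-1} = (I-S_I^{-1}S_{II})S_I^{-1}+O(\epsilon)$ applied to $\chi_{\p D_j}$.

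Finally I would substitute this expansion into $C_{ij}=-\int_{\p D_i}(\S_D^0)^{-1}[\chi_{\p D_j}]\dx\sigma$. With $\phi_j := S_I^{-1}[\chi_{\p D_j}]$ and $\mathrm{Cap}_{D_j}=\epsilon\,\mathrm{Cap}_B$, the leading term $-\int_{\p D_i}\phi_j$ vanishes for $i\neq j$ (since $\phi_j$ is supported, to leading order, on the inverse restricted to $\p D_j$) and equals $\epsilon\,\mathrm{Cap}_B$ for $i=j$. The first correction comes from the $-S_I^{-1}S_{II}$ term: using $\int_{\p D_j}\phi_j\dx\sigma = -\mathrm{Cap}_{D_j}$, the off-diagonal entry picks up exactly $-\epsilon^2(\mathrm{Cap}_B)^2/(4\pi|z_i-z_j|)$, while for the diagonal the $O(\epsilon^2)$ correction from self-interaction is absent (there is no $m\neq0$ sum here, unlike \eqref{eq:c1q}), leaving $C_{ii}=\epsilon\,\mathrm{Cap}_B+O(\epsilon^3)$. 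The main obstacle is bookkeeping rather than anything conceptual: one must verify uniformly over all $O(N^2)$ block pairs that the neglected terms are genuinely $O(\epsilon^3)$ in the $\H$-norm (including their surface gradients, as in \eqref{eq:single_ij_formula}), and track that the $\|\phi_j\|_\L\lesssim\epsilon^{-1}$ bound propagates correctly through the Neumann series so that the $O(\epsilon)$ remainder in $(\S_D^0)^{-1}$ contributes only $O(\epsilon^3)$ after integration against $\chi_{\p D_i}$ over the boundary of measure $O(\epsilon^2)$.
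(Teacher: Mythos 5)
Your proposal is correct and takes essentially the same route as the paper: the paper's proof of \Cref{lem:cap_estim} consists precisely of the block decomposition $\S_D^0 = S_I + S_{II}$ followed by the scaling estimates and Neumann series argument of \Cref{lem:cap_estim_quasi}, with the Floquet sums dropped. Your observations that the diagonal correction vanishes (no $m\neq 0$ lattice sum) and that the $O(\epsilon)$ remainder in $(\S_D^0)^{-1}$ integrates to $O(\epsilon^3)$ over a boundary of measure $O(\epsilon^2)$ are exactly the points the paper relies on.
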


\begin{proof}
	The argument is very similar to that in \Cref{lem:cap_estim_quasi}.	We first write the single layer potential $\mathcal{S}_D^0$ in a decomposed matrix form, as
	\begin{align*}
	\mathcal{S}_D^0 &= \begin{pmatrix}
	\mathcal{S}_{D_1}^0 & 0 & \cdots & 0\\
	0 & \mathcal{S}_{D_2}^0 & \cdots & 0\\
	\vdots &  & \ddots & \vdots
	\\
	0 & \cdots & 0 & \mathcal{S}_{D_N}^0
	\end{pmatrix}
	+ 
	\begin{pmatrix}
	0 & \mathcal{S}_{D_2}^0|_{\p D_1} & \cdots & \mathcal{S}_{D_N}^0|_{\p D_1}\\
	\mathcal{S}_{D_1}^0 |_{\p D_2} & 0 & \cdots & \vdots \\
	\vdots &   & \ddots  & \mathcal{S}_{D_{N}}^0|_{\p D_{N-1}}
	\\
	\mathcal{S}_{D_1}^0|_{\p D_N} &  \cdots & \mathcal{S}_{D_{N-1}}^0|_{\p D_N} & 0
	\end{pmatrix}
	\\
	&:= S_I + S_{II}.
	\end{align*}
	We can then proceed to use scaling arguments, as in \Cref{lem:cap_estim_quasi}, to find estimates for $	\| S_I\|$, $\| S_I^{-1}\|$ and $ \| S_{II} \|$, and then use these bounds to compute the asymptotic behaviour of $C$.
\end{proof}

\begin{rmk}
	The explicit representations for $C_{ij}$ derived in \Cref{lem:cap_estim}, when used in the formula from \Cref{thm:char_approx_finite}, give approximations for the resonant frequencies in the dilute regime.  Moreover, the associated eigenmodes can be approximated using the fact that the characteristic functions $\Psi_j$, defined for each $\omega_j$ in \eqref{eq:boundary}, satisfy
	$$
	\Psi_j = \sum_{k=1}^N a_{jk} \psi_k + O(\sqrt{\delta}),
	$$
	where $(a_{j1}, \dots , a_{jN})$ is the eigenvector of $C$ associated with the eigenvalue $\lambda_j$. %Here, $T$ denotes the transpose. 
	This approach is particularly useful for performing efficient numerical computations.
\end{rmk}

%\begin{rmk} \label{rmk:tb_is_bad}
%	Roughly speaking, the off-diagonal elements of the capacitance matrix represent the interactions between the particles $D_i$ and $D_j$. 
	One can see from Lemma \ref{lem:cap_estim} that, for $i\neq j$, $C_{ij}$ satisfies the  slow decay property
	\begin{equation} \label{rmk:tb_is_bad}
		C_{ij} \sim \frac{1}{|i-j|}.
	\end{equation}
	This indicates that, for a system of subwavelength resonators, the nearest-neighbour approximation %(truncation of the matrix $\{C_{ij}\}$ to a tri-diagonal one) 
	may not give an accurate representation. This is a significant difference between the classical wave propagation problems studied here and the analogous applications of topological insulator theory in quantum mechanics, where nearest-neighbour approximations are commonplace.
	
	%We thus divide into two regimes: the dilute regime with small $\epsilon$ and the tight-binding regime with much smaller $\epsilon$, where the nearest-neighbour approximation can be used.
	
%\end{rmk}

%This gives a tridiagonal capacitance matrix. In quantum mechanical settings, it is also customary to define the zero energy state to be such that the diagonal entries vanish (and the midgap energy level is set to zero). Thus, one constructs a \emph{translated capacitance matrix} $\tilde{C}$, which has zero entries everywhere other than the two first off-diagonals.

\subsubsection{Chiral symmetry and edge mode frequencies} \label{subsec:chiral}

A prominent topic in the discussion of the SSH model is the notion of \emph{chiral symmetry}. This is a geometric property which a system is said to possess if there is an unitary matrix $\Sigma$ with $\Sigma^2=I$ such that the capacitance matrix $\widetilde{C}$ satisfies $\Sigma \widetilde{C} \Sigma = -\widetilde{C}$. The significance of this property is that a chirally symmetric matrix will have a symmetric spectrum. This is easily seen from the fact that if $(\lambda,v)$ is an eigenpair for $\widetilde{C}$ then so is $(-\lambda,\Sigma v)$. Finite chains that have an odd number of resonators (such as the example studied here, \Cref{fig:finite}) will have an odd number of resonant frequencies hence there must be a middle frequency. Thus, if one can design a chain which has a band gap (which we have a suggestion of how to do from \Cref{sec:inf_chain}) and is chirally symmetric, there must be a midgap frequency.

The reason we use the notation $\widetilde{C}$ for the capacitance matrix in this discussion is that in quantum mechanical settings it is customary to define the zero-energy state to be such that the diagonal entries of the Hamiltonian (which plays the analogous role of the capacitance matrix) vanish. Thus, one constructs a \emph{translated capacitance matrix} $\widetilde{C}$ by subtracting the constant diagonal elements. For the crystal in Figure \ref{fig:finite}, we can use \eqref{rmk:tb_is_bad} to approximate $\widetilde{C}$ by a nearest-neighbour approximation: a bisymmetric, tridiagonal matrix with odd size and zero diagonal. Such a matrix is chirally symmetric, and therefore has a zero eigenvalue. This shows that the finite system has a midgap frequency, at leading order.

The key property of a topologically protected state is that it retains its properties when imperfections exist in the structure. In particular, a chirally symmetric structure will retain its chiral symmetry when errors are made in the position of the resonators. This is because such errors will not affect the diagonal entries of $\widetilde C$ and, away from the diagonals, $\widetilde C_{ij}$ and $\widetilde C_{ji}$ will experience the same effects. Since the nearest-neighbour approximation of the capacitance matrix is chirally symmetric, we expect the midgap frequencies to be approximately stable with respect to errors in resonator position.

In \Cref{fig:tight_bind} we show how the resonant frequencies given by a nearest-neighbour approximation to a dilute resonator chain vary when subjected to errors in the position of the resonators. We use the multipole expansion method outlined in Appendix~\ref{app:multipole} to calculate the capacitance matrix \eqref{eq:finite_capacitance} then \Cref{thm:char_approx_finite} to compute the resonant frequencies from its eigenvalues. The pertinent conclusion from this is that, under the nearest-neighbour approximation, the midgap frequency is perfectly stable (as predicted by the above discussion). This approximation should be compared to \Cref{fig:dilute}, where the same simulations are performed on a fully-coupled chain. In light of the slow decay of the off-diagonal terms in the capacitance matrix (\ref{rmk:tb_is_bad}), the differences between the behaviour of the approximated and fully-coupled models are unsurprising, even when simulations are performed in a very dilute regime.

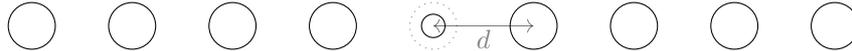
\begin{figure}[tb]
	\centering
	\begin{tikzpicture}[scale=1.1]
	\draw[dotted,opacity=0.5] (0,0) coordinate (start1) circle (8pt);
	\draw (0,0) coordinate (start1) circle (4pt);
	\draw[<->,opacity=0.5] (0,0) -- (1.2,0) node[pos=0.5,yshift=-5pt]{$d$};
	
	\begin{scope}[xshift=-1.2cm]
	\draw (0,0) coordinate (start1) circle (8pt);
	\end{scope}
	
	\begin{scope}[xshift=-2.4cm]
	\draw (0,0) coordinate (start1) circle (8pt);
	\end{scope}
	
	\begin{scope}[xshift=-3.6cm]
	\draw (0,0) coordinate (start1) circle (8pt);
	\end{scope}
	
	\begin{scope}[xshift=1.2cm]
	\draw (0,0) coordinate (start1) circle (8pt);
	\end{scope}
	
	\begin{scope}[xshift=2.4cm]
	\draw (0,0) coordinate (start1) circle (8pt);
	\end{scope}
	
	\begin{scope}[xshift=3.6cm]
	\draw (0,0) coordinate (start1) circle (8pt);
	\end{scope}
	
	\begin{scope}[xshift=4.8cm]
	\draw (0,0) coordinate (start1) circle (8pt);
	\end{scope}
	
	\begin{scope}[xshift=-4.8cm]
	\draw (0,0) coordinate (start1) circle (8pt);
	\end{scope}
	
	\end{tikzpicture}
	\caption{Two-dimensional cross-section of a finite subwavelength resonator chain with a point defect, which is expected to support an unprotected localized mode.} \label{fig:pointdefect}
\end{figure}

\begin{figure}
	\begin{center}
		\begin{subfigure}[b]{0.45\linewidth}
			\includegraphics[height=5cm]{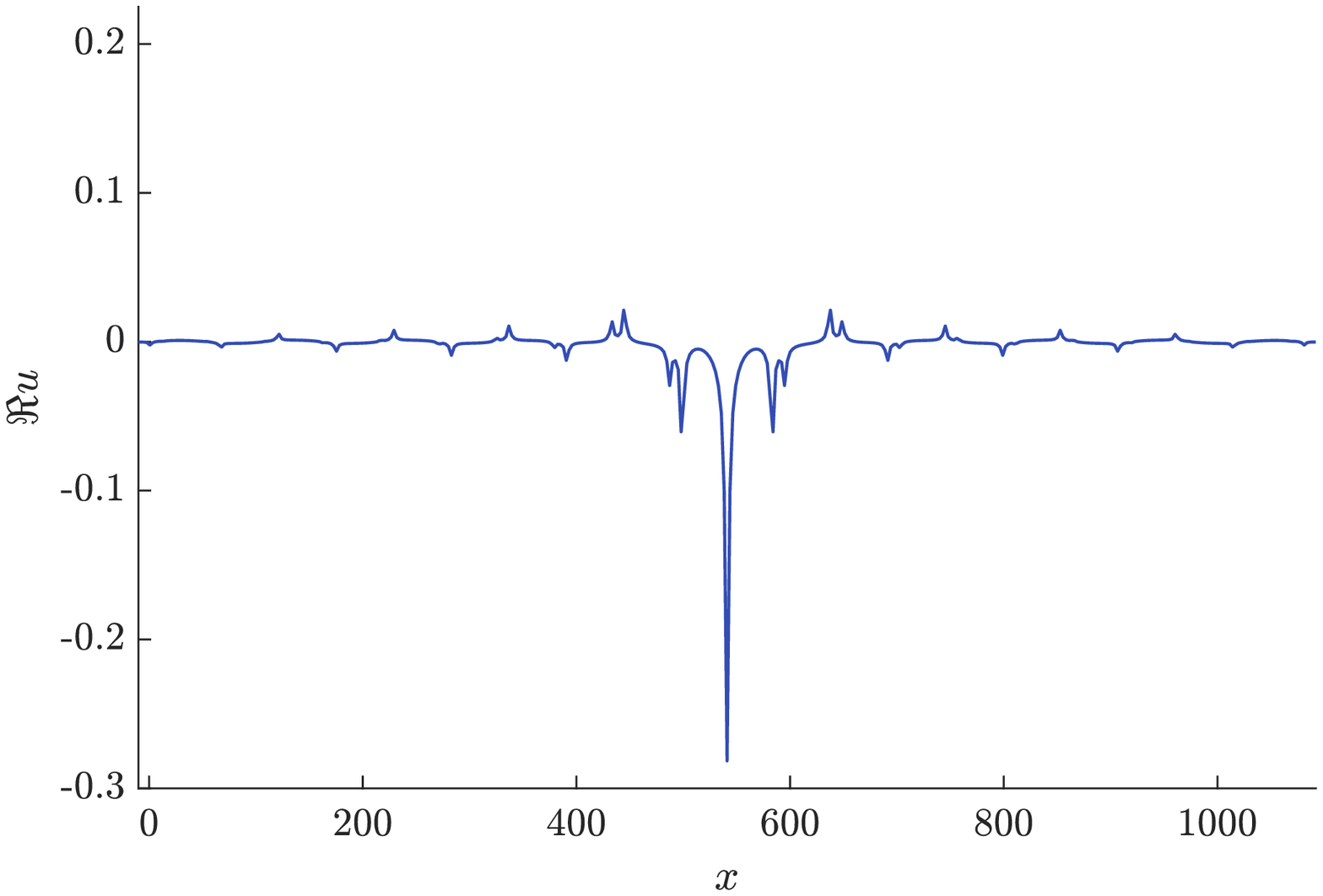}
			\caption{Topologically protected edge state for the (dilute) dimer chain. %from \Cref{fig:dilute}.%with $d=12$ and $d'=42$.
			} \label{fig:ssh_mode}
		\end{subfigure}
		\hspace{10pt}
		\begin{subfigure}[b]{0.45\linewidth}
			\includegraphics[height=5cm]{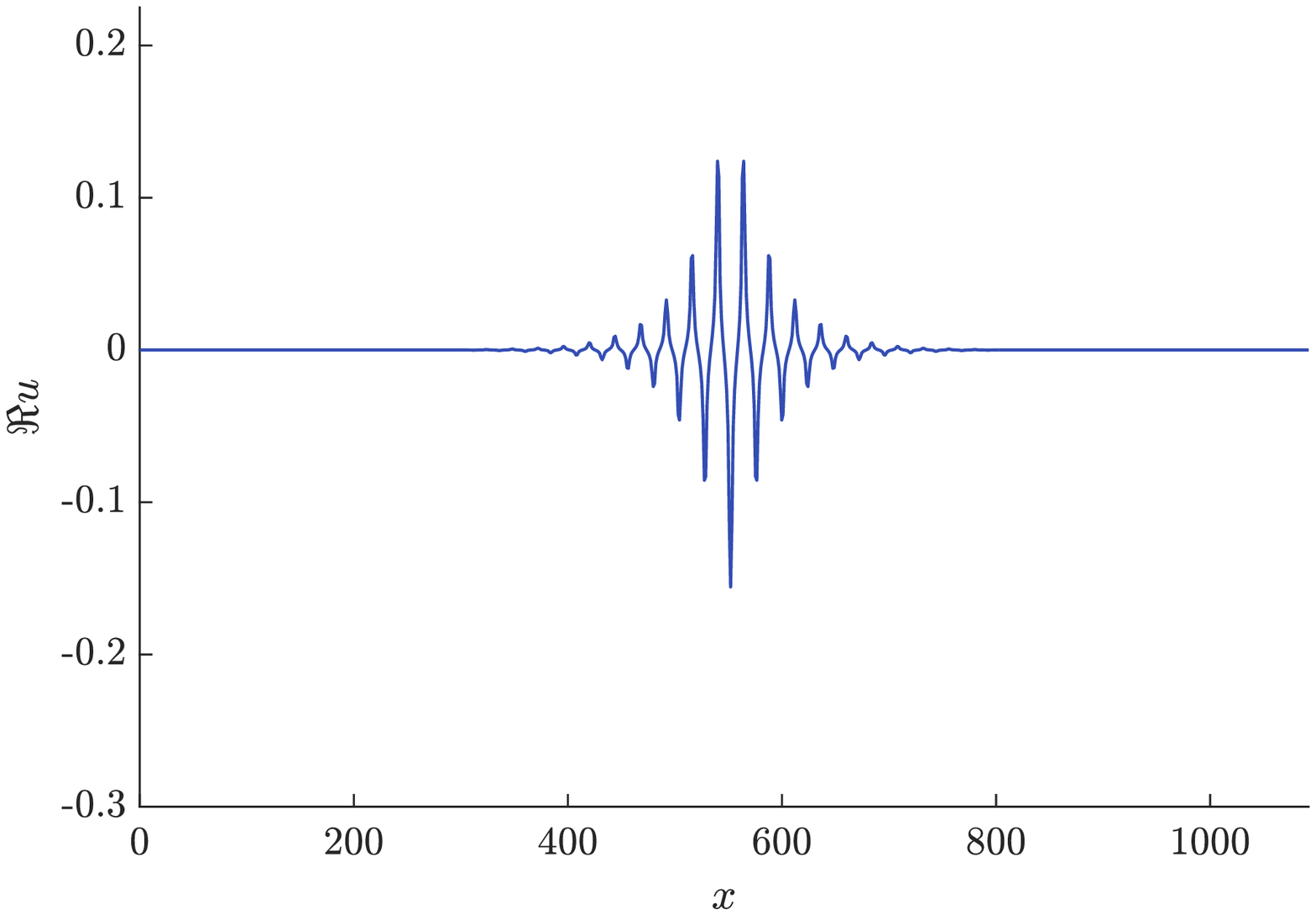}
			\caption{Unprotected localized mode for the point-defect chain.} \label{fig:defect}
		\end{subfigure}
		\caption{Comparison of the localised eigenstates exhibited by the finite chain of dimers (shown in \Cref{fig:finite}) and the point-defect chain (shown in \Cref{fig:pointdefect}). In both cases, a chain consisting of 41 resonators is used.}
	\end{center}
\end{figure}

\begin{figure}
	\begin{center}
		\begin{subfigure}[b]{0.45\linewidth}
			\includegraphics[height=5.0cm]{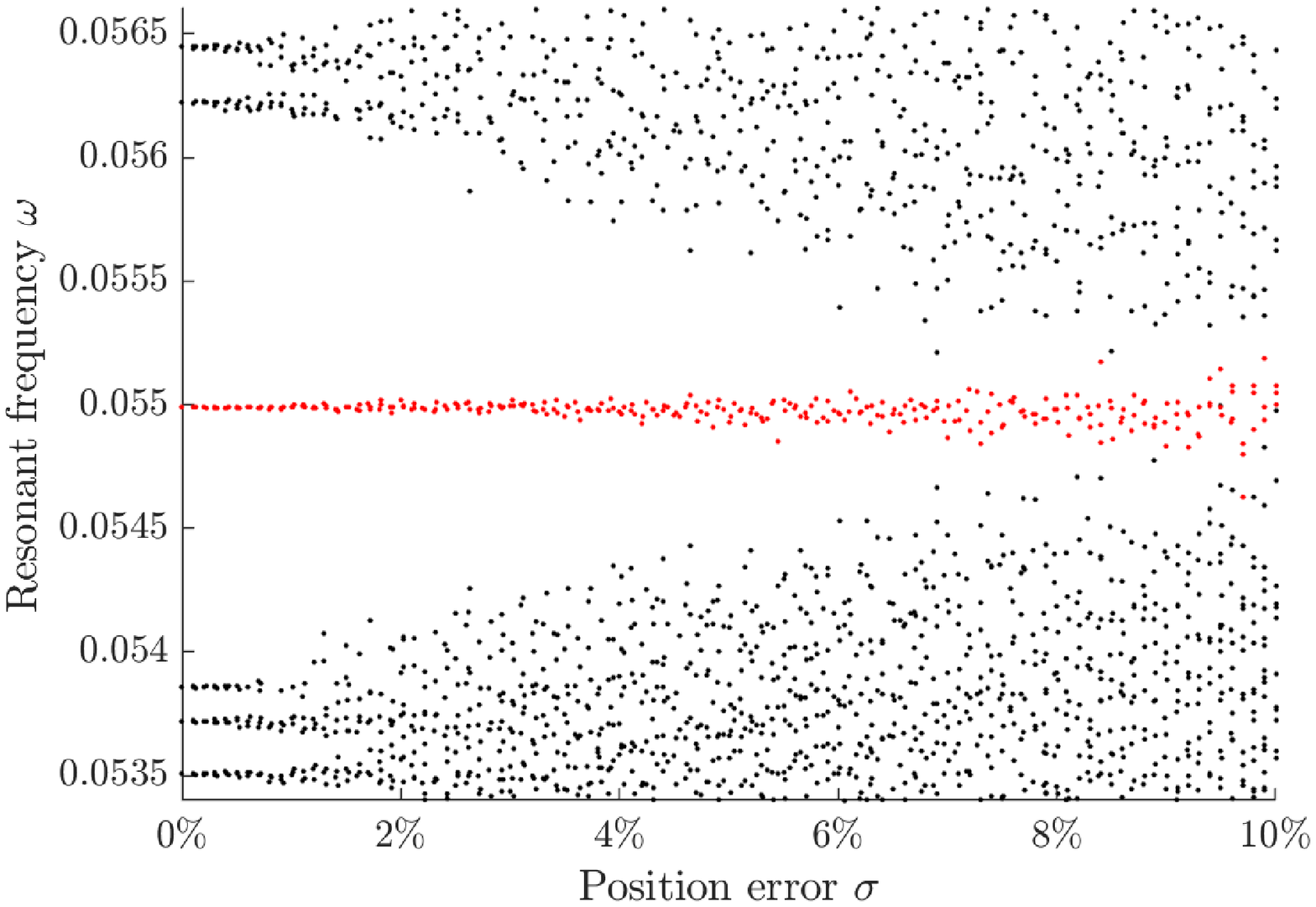}
			\caption{Dilute dimer chain with 41 resonators, separation distances $d=12, d' = 42$.} \label{fig:dilute}
		\end{subfigure}			
		\hspace{10pt}
		\begin{subfigure}[b]{0.45\linewidth}
			\includegraphics[height=5.0cm]{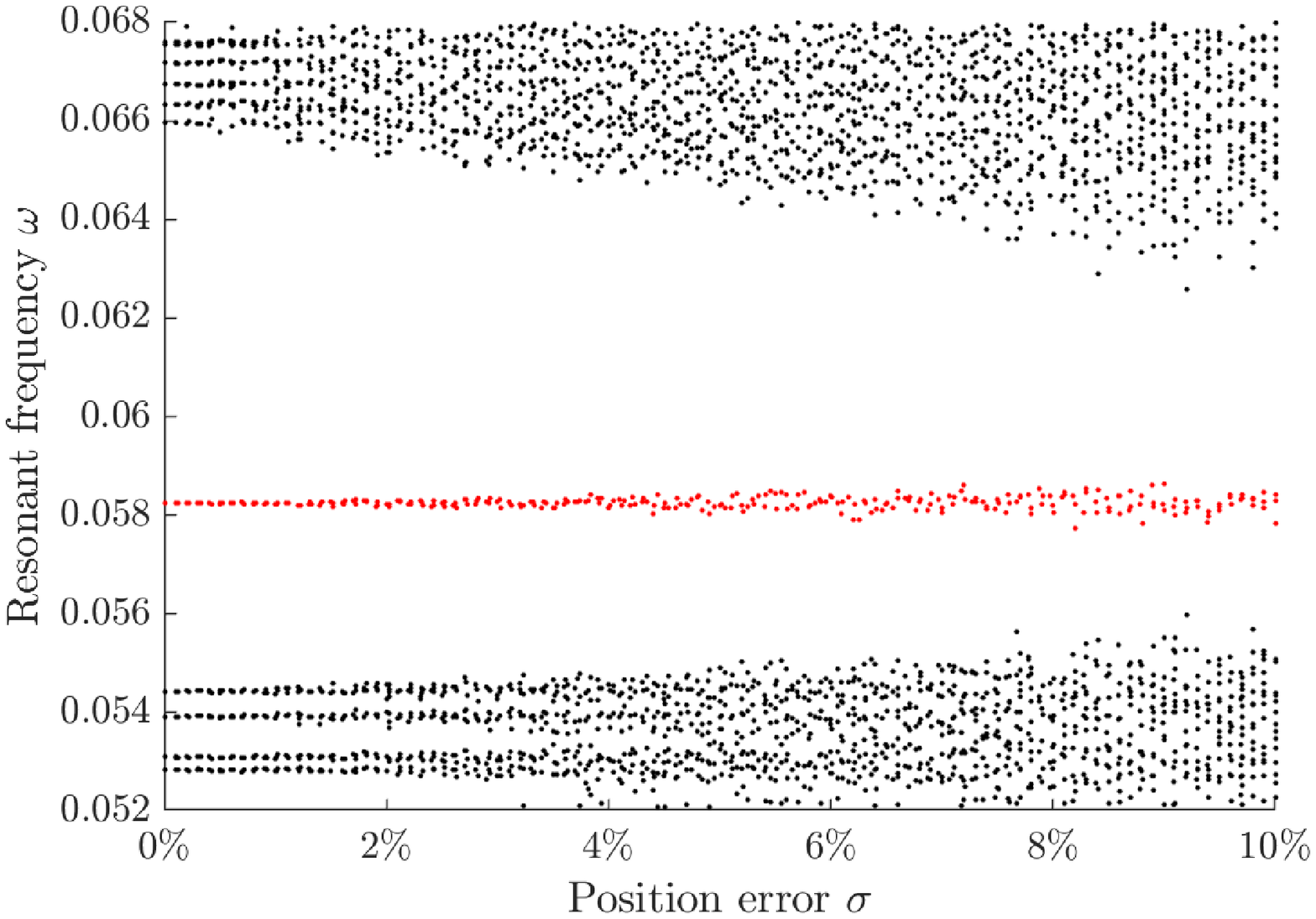}
			\caption{Non-dilute dimer chain with 41 resonators, separation distances $d=3, d' = 6$.} \label{fig:nondilute}
		\end{subfigure}	
		\vspace{20pt}
		\begin{subfigure}[b]{0.45\linewidth}
			\small
			\begin{center}
				\def\arraystretch{1.2}
				\begin{tabular}{ c | c | c }
					& dilute & non-dilute  \\ 
					\hline
					upper band & $1.03\times10^{-7}$ & $1.59\times10^{-7}$ \\  
					%		\hline
					midgap & $4.90\times10^{-9}$ & $2.12\times10^{-8}$ \\
					%		\hline
					lower band & $1.27\times10^{-7}$ & $7.67 \times10^{-7}$ 
					\vspace{15pt}
				\end{tabular}
			\end{center}
			\caption{The variance of the midgap and bulk frequencies from plots (a) and (b), for $\sigma=8\%$.} \label{var_table}
		\end{subfigure}
		\hspace{10pt}
	\begin{subfigure}[b]{0.45\linewidth}
		\includegraphics[height=4.1cm]{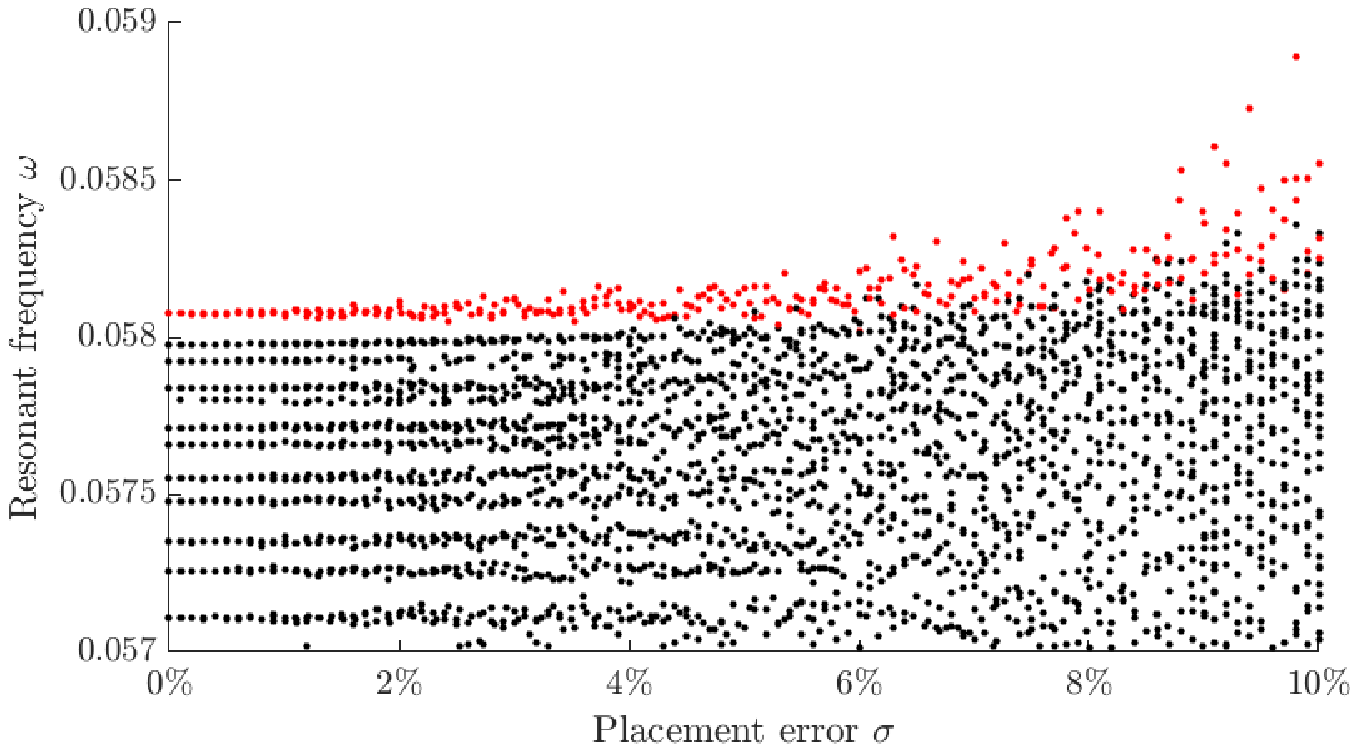}
		\caption{Dilute point-defect chain with 41 resonators, separation distance $d=12$ and defect radius $R_d = 0.99$.} \label{fig:point_defect_errors}
	\end{subfigure}
		\vspace{20pt}
		\begin{subfigure}[b]{0.45\linewidth}
			\includegraphics[height=5.0cm]{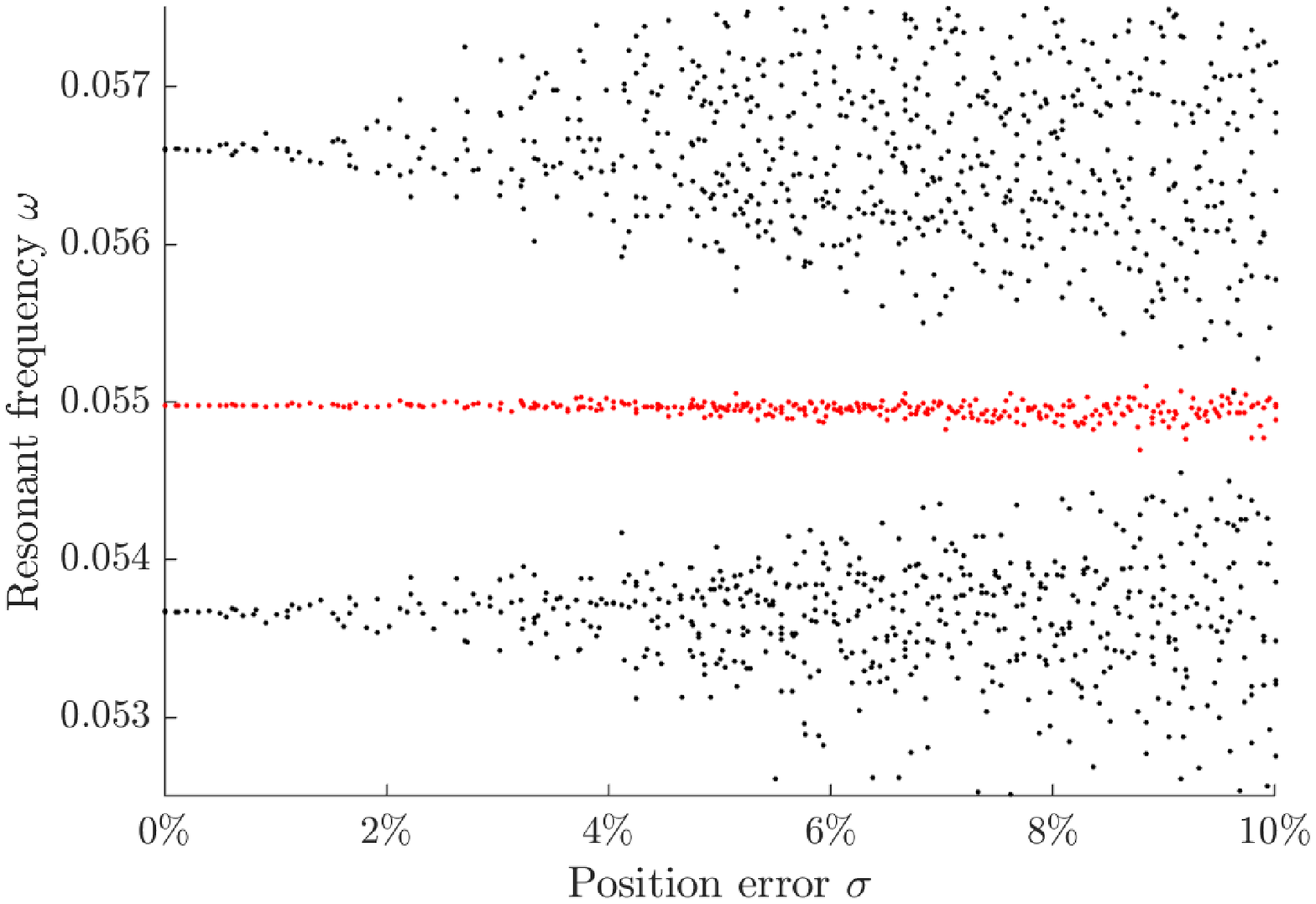}
			\caption{Dilute dimer chain with 9 resonators, separation distances $d=12, d' = 42$.} \label{fig:nine}
		\end{subfigure}	
		\hspace{10pt}
		\begin{subfigure}[b]{0.45\linewidth}
			\includegraphics[height=5.0cm]{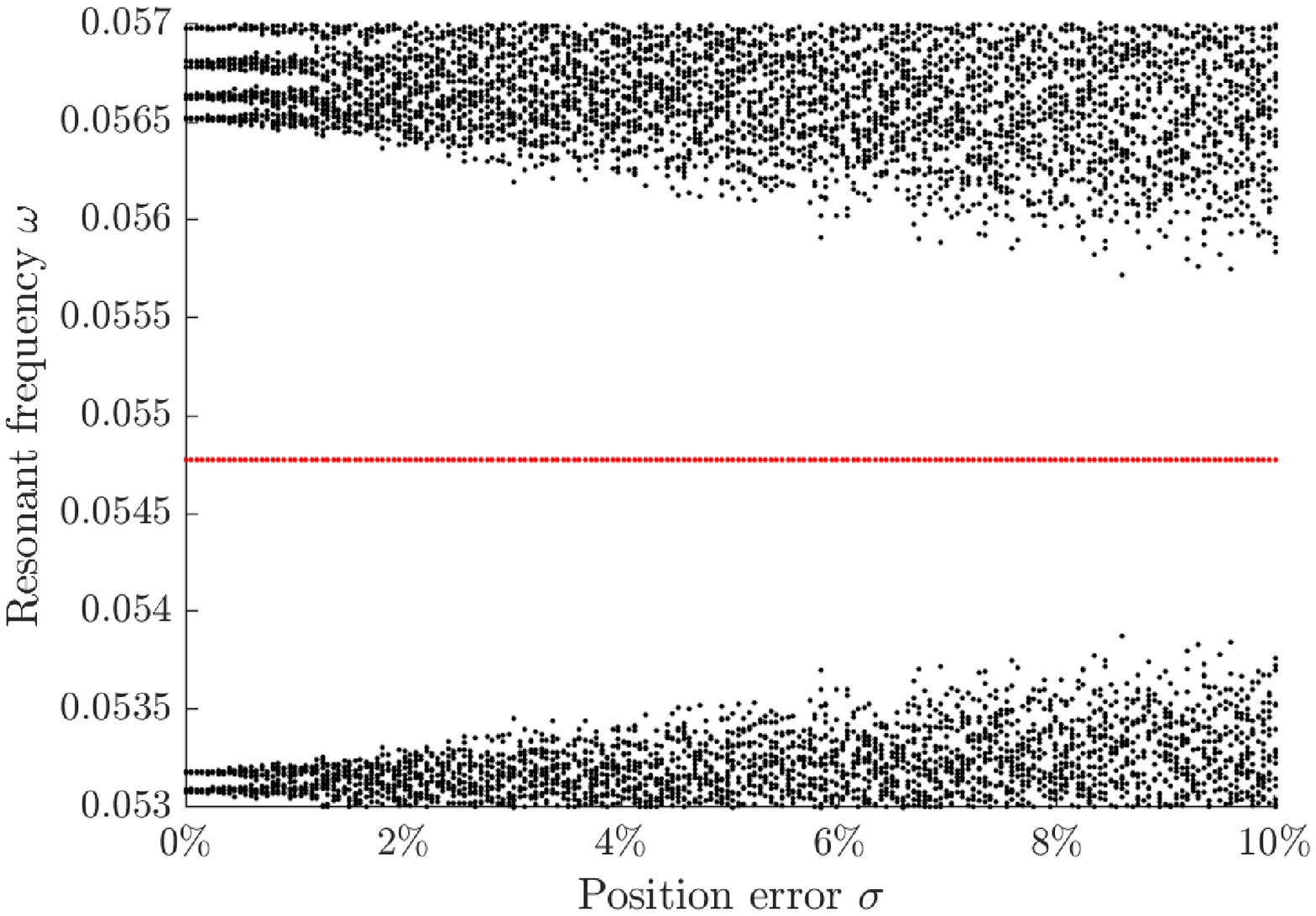}
			\caption{Nearest-neighbour approximation for the dilute dimer chain from (a).% with 41 resonators, separation distances $d=12, d' = 42$.
			} \label{fig:tight_bind}
		\end{subfigure}	
%		\vspace{10pt}
%		\begin{subfigure}[b]{0.45\linewidth}
%			\includegraphics[height=5.0cm]{/pointdefect_41.eps}
%			\caption{Dilute point-defect chain with 41 resonators, separation distance $d=12$ and defect radius $R_d = 0.99$.} \label{fig:point_defect_errors}
%		\end{subfigure}
%		\hspace{10pt}
%		\begin{subfigure}[b]{0.45\linewidth}
%			\includegraphics[height=5cm]{/pointdefect_41_centre.eps}
%			\caption{Unprotected localized mode for the point-defect chain from (g).}\label{fig:defect}
%		\end{subfigure}
		\caption{Simulation of band gap frequency (red) and bulk frequencies (black) of different subwavelength resonator chains with Gaussian $\mathcal{N}(0,\sigma^2)$ errors added to the resonator positions. The standard deviation $\sigma$ is expressed as a percentage of the average resonator separation. %In all cases, the resonator radius was $R=1$.
		}
		\label{fig:perturbation}		
	\end{center}
\end{figure}

\subsection{Numerical illustrations} \label{sec:finite_numerics}
We now perform a series of numerical computations to illustrate the difference between the topologically protected subwavelength localized modes in the finite dimer chain \eqref{finite_form} and conventional, unprotected, subwavelength localized modes. The unprotected mode we study is produced by taking an equally spaced chain of resonators and changing the radius of the central resonator, thus introducing a defect (often known as a \emph{point defect}). This system, depicted in \Cref{fig:pointdefect}, is the finite, one-dimensional equivalent of the system studied in \cite{defectSIAM}, where the existence of a subwavelength localized mode was proved in the case of an infinite crystal.

As was the case for the infinite chain in \Cref{sec:inf_numerics}, the following numerical results for the finite chains were calculated for the case of acoustic waves being scattered by (subwavelength) air bubbles in water. The details of discretizing the operator $\A(\omega,\delta)$ using the multipole expansion method are given in Appendix~\ref{app:multipole}.

\subsubsection{Existence of localized modes} \label{sec:finite_modes}
\Cref{fig:ssh_mode,,fig:defect} show the localized modes for the dimer and point-defect chains respectively (whose geometries are depicted in \Cref{fig:finite,,fig:pointdefect}). The configurations have been chosen to give roughly the same strength of the localization.

\subsubsection{Stability with respect to errors}
Finally, we study the stability of the edge mode frequency with respect to random, symmetry-preserving imperfections. In light of the discussion in \Cref{subsec:chiral}, we add random errors to the positions of the resonators and repeatedly compute the resonant frequencies. In \Cref{fig:dilute,,fig:nondilute} we can see that, in both the dilute and non-dilute regimes, the structure supports a localized mode (depicted in \Cref{fig:ssh_mode} for the dilute case) whose resonant frequency is in the middle of the band gap. In Table~\ref{var_table} it is demonstrated that in the two regimes the stability of each frequency with respect to the random errors is very similar in magnitude. The fact that the midgap frequency is consistently further from the edges of the band gap in the non-dilute case is merely a consequence of the gap being wider in this regime.
In \Cref{fig:dilute} we present the same simulations for a very short dimer chain, with only nine resonators. We can see, once again, that there is a midgap frequency which is much more stable than the bulk frequencies. %Recall that the design of this structure was based on intuition developed by studying an infinite chain (in \Cref{sec:inf_chain}). %Thus, it is a significant observation that the desired behaviour (that there exists a topologically protected subwavelength midgap frequency) holds even in very small finite structures.

Finally, we make a comparison with the conventional defect mode exhibited by the subwavelength point-defect chain (shown in \Cref{fig:pointdefect}). It is clear from \Cref{fig:point_defect_errors} that, even for relatively small errors, the frequency associated with the point-defect mode exhibits poor stability and is easily lost amongst the bulk frequencies. The comparison between the robustness of the two designs is particularly eye-opening in light of the observation that the degree of wave localization is very similar. The new, dimerized design is equally capable of localizing waves at subwavelength scales but does so with spectacularly enhanced robustness.

\section{Concluding remarks}
In this work, we have, both analytically and numerically, studied a fully-coupled chain of subwavelength resonator dimers. We have shown that the infinite crystal exhibits a non-trivial Zak phase in certain resonator configurations. In the dilute regime, we have given explicit expressions for the Zak phase, proved the existence of a non-trivial band gap and shown that band inversion occurs between the two different phase regimes. Guided by these findings, we have designed a finite resonator chain that exhibits topologically protected edge modes at its centre. This was based on being able to associate the dimers on either side of this edge with different values of the Zak phase. We have shown numerically that the edge mode frequency is well-localized in the band gap and that, when errors are added to the positions of the resonators, the variance of this frequency is significantly lower than that of the bulk frequencies. %In the dilute regime, we have proved estimates for a tight-binding approximation to this chain and thus offered further grounds for the existence of a topologically protected edge mode.
Although much of the explicit analysis was performed on infinite chains, numerical experiments showed that our approach can be used to create topologically protected edge modes in structures that contain only very small numbers of resonators.

% It is a significant observation that numerical experiments show the existence of topologically protected edge modes not only in finite chains that are large and dilute (and thus more closely related to the analysis of \Cref{sec:inf_chain}) but that this phenomenon is consistent across both shorter chains and non-dilute regimes.

%The success of our numerical experiments across long and short chains, as well as both dilute and non-dilute regimes, is testament to the 

%This property holds across both dilute and non-dilute regimes, and can even be observed in very short finite chains.
%Recall that the design of this structure was based on intuition developed by studying an infinite chain (in \Cref{sec:inf_chain}). Thus, it is a significant observation that the desired behaviour (that there exists a topologically protected subwavelength midgap frequency) holds even in very small finite structures.

\bibliographystyle{abbrv}
\bibliography{tightbind_jmpa}{}

\begin{thebibliography}{10}

\bibitem{brynCochlea}
H.~Ammari and B.~Davies.
\newblock A fully coupled subwavelength resonance approach to filtering
  auditory signals.
\newblock {\em Proc. R. Soc. A}, 475(2228):20190049, 2019.

\bibitem{first}
H.~Ammari, B.~Fitzpatrick, D.~Gontier, H.~Lee, and H.~Zhang.
\newblock Minnaert resonances for acoustic waves in bubbly media.
\newblock {\em Ann. I. H. Poincaré-An.}, 35(7):1975 -- 1998, 2018.

\bibitem{defectSIAM}
H.~Ammari, B.~Fitzpatrick, E.~O. Hiltunen, and S.~Yu.
\newblock Subwavelength localized modes for acoustic waves in bubbly crystals
  with a defect.
\newblock {\em SIAM J. Appl. Math.}, 78(6):3316--3335, 2018.

\bibitem{MaCMiPaP}
H.~Ammari, B.~Fitzpatrick, H.~Kang, M.~Ruiz, S.~Yu, and H.~Zhang.
\newblock {\em Mathematical and Computational Methods in Photonics and
  Phononics}, volume 235 of {\em Mathematical Surveys and Monographs}.
\newblock American Mathematical Society, Providence, 2018.

\bibitem{honeycomb}
H.~Ammari, B.~Fitzpatrick, H.~Lee, E.~O. Hiltunen, and S.~Yu.
\newblock Honeycomb-lattice minnaert bubbles.
\newblock {\em arXiv preprint arXiv:1811.03905}, 2018.

\bibitem{bandgap}
H.~{Ammari}, B.~{Fitzpatrick}, H.~{Lee}, S.~{Yu}, and H.~{Zhang}.
\newblock {Subwavelength phononic bandgap opening in bubbly media}.
\newblock {\em J. Differ. Equations}, 263(9):5610--5629, 2017.

\bibitem{doublenegative}
H.~{Ammari}, B.~{Fitzpatrick}, H.~{Lee}, S.~{Yu}, and H.~{Zhang}.
\newblock Double-negative acoustic metamaterials.
\newblock {\em Quart. Appl. Math.}, 77(4):767–791, 2019.

\bibitem{nearzero}
H.~{Ammari}, E.~O. {Hiltunen}, and S.~{Yu}.
\newblock {A high-frequency homogenization approach near the Dirac points in
  bubbly honeycomb crystals}.
\newblock {\em arXiv 1812.06178}.

\bibitem{AKL}
H.~Ammari, H.~Kang, and H.~Lee.
\newblock {\em Layer Potential Techniques in Spectral Analysis}, volume 153 of
  {\em Mathematical Surveys and Monographs}.
\newblock American Mathematical Society, Providence, 2009.

\bibitem{highfrequency}
H.~{Ammari}, H.~{Lee}, and H.~{Zhang}.
\newblock {Bloch waves in bubbly crystal near the first band gap: a
  high-frequency homogenization approach}.
\newblock {\em SIAM J. Math. Anal.}, 51(1):45--59, 2019.

\bibitem{add2}
Y.~Ao, X.~Hu, C.~Li, Y.~You, and Q.~Gong.
\newblock Topological properties of coupled resonator array based on accurate
  band structure.
\newblock {\em Phys. Rev. Materials}, 2:105201, 10 pp., 2018.

\bibitem{shortcourse}
J.~K. Asb{\'o}th, L.~Oroszl{\'a}ny, and A.~P{\'a}lyi.
\newblock A short course on topological insulators.
\newblock {\em Lecture notes in physics}, 919, 2016.

\bibitem{zak_experiment}
M.~Atala, M.~Aidelsburger, J.~T. Barreiro, D.~Abanin, T.~Kitagawa, E.~Demler,
  and I.~Bloch.
\newblock Direct measurement of the {Zak} phase in topological bloch bands.
\newblock {\em Nat. Phys.}, 9:795, 2013.

\bibitem{phases}
D.~Chruscinski and A.~Jamiolkowski.
\newblock {\em Geometric Phases in Classical and Quantum Mechanics}, volume~36
  of {\em Mathematical Surveys and Monographs}.
\newblock Birkhäuser, Basel, 2004.

\bibitem{drouot2}
A.~Drouot.
\newblock The bulk-edge correspondence for continuous dislocated systems.
\newblock {\em arXiv preprint arXiv:1810.10603}, 2018.

\bibitem{drouot4}
A.~Drouot.
\newblock The bulk-edge correspondence for continuous honeycomb lattices.
\newblock {\em Commun. Part. Diff. Eq.}, 44(12):1406--1430, 2019.

\bibitem{drouot1}
A.~Drouot, C.~L. Fefferman, and M.~I. Weinstein.
\newblock Defect states for dislocated periodic media.
\newblock {\em arXiv:1810.05875 (To appear in Comm. Math. Physics)}, 2018.

\bibitem{bateman1953higher}
A.~Erdélyi, W.~Magnus, F.~Oberhettinger, and F.~G. Tricomi.
\newblock Higher transcendental functions vol. i, 1953.

\bibitem{Zak_acoustic}
L.~Fan, W.-W. Yu, S.-Y. Zhang, H.~Zhang, and J.~Ding.
\newblock Zak phases and band properties in acoustic metamaterials with
  negative modulus or negative density.
\newblock {\em Phys. Rev. B}, 94:174307, Nov 2016.

\bibitem{fefferman}
C.~L. Fefferman, J.~P. Lee-Thorp, and M.~I. Weinstein.
\newblock Topologically protected states in one-dimensional continuous systems
  and dirac points.
\newblock {\em P. Nat. Acad. Sci. USA}, 111(24):8759--8763, 2014.

\bibitem{fefferman2}
C.~L. Fefferman, J.~P. Lee-Thorp, and M.~I. Weinstein.
\newblock Edge states in honeycomb structures.
\newblock {\em Ann. PDE}, 2(2):12, Dec 2016.

\bibitem{fefferman3}
C.~L. Fefferman, J.~P. Lee-Thorp, and M.~I. Weinstein.
\newblock Honeycomb schrödinger operators in the strong binding regime.
\newblock {\em Commun. Pure Appl. Math.}, 71(6):1178--1270, 2018.

\bibitem{fefferman4}
C.~L. Fefferman and M.~I. Weinstein.
\newblock Honeycomb lattice potentials and dirac points.
\newblock {\em J. Am. Math. Soc.}, 25(4):1169–1220, 2012.

\bibitem{addition}
B.~Felderhof and R.~Jones.
\newblock Addition theorems for spherical wave solutions of the vector
  helmholtz equation.
\newblock {\em J. Math. Phys.}, 28(4):836--839, 1987.

\bibitem{gohberg2009holomorphic}
I.~Gohberg and J.~Leiterer.
\newblock {\em Holomorphic operator functions of one variable and applications:
  methods from complex analysis in several variables}, volume 192.
\newblock Springer Science \& Business Media, 2009.

\bibitem{graf2013bulk2d}
G.~M. Graf and M.~Porta.
\newblock Bulk-edge correspondence for two-dimensional topological insulators.
\newblock {\em Commun. Math. Phys.}, 324(3):851--895, 2013.

\bibitem{graf2018bulk}
G.~M. Graf and J.~Shapiro.
\newblock The bulk-edge correspondence for disordered chiral chains.
\newblock {\em Commun. Math. Phys.}, 363(3):829--846, 2018.

\bibitem{graf2018bulk2d}
G.~M. Graf and C.~Tauber.
\newblock Bulk--edge correspondence for two-dimensional floquet topological
  insulators.
\newblock {\em Ann. Henri Poincar{\'e}}, 19(3):709--741, 2018.

\bibitem{kanereview}
M.~Z. Hasan and C.~L. Kane.
\newblock Colloquium: topological insulators.
\newblock {\em Rev. Mod. Phys.}, 82(4):3045, 2010.

\bibitem{kuchment}
P.~Kuchment.
\newblock {\em Floquet Theory for Partial Differential Equations}.
\newblock Number~60 in Operator Theory: Advances and Applications. Birkh\"auser
  Verlag, Basel, 1993.

\bibitem{kuchment2}
P.~Kuchment.
\newblock {An overview of periodic elliptic operators}.
\newblock {\em B. Am. Math. Soc.}, 53(3):343--414, 2016.

\bibitem{lee-thorp}
J.~P. Lee-Thorp, M.~I. Weinstein, and Y.~Zhu.
\newblock Elliptic operators with honeycomb symmetry: Dirac points, edge states
  and applications to photonic graphene.
\newblock {\em Arch. Ration. Mech. An.}, 232(1):1--63, Apr 2019.

\bibitem{top_acoustic_SSH}
X.~Li, Y.~Meng, X.~Wu, S.~Yan, Y.~Huang, S.~Wang, and W.~Wen.
\newblock Su-schrieffer-heeger model inspired acoustic interface states and
  edge states.
\newblock {\em Appl. Phys. Lett.}, 113(20):203501, 2018.

\bibitem{linton}
C.~Linton and I.~Thompson.
\newblock One- and two-dimensional lattice sums for the three-dimensional
  helmholtz equation.
\newblock {\em J. Comput. Phys.}, 228(6):1815 -- 1829, 2009.

\bibitem{bulkbdy}
A.~M.~Essin and V.~Gurarie.
\newblock Bulk-boundary correspondence of topological insulators from their
  respective green’s functions.
\newblock {\em Phys. Rev. B}, 84, 04 2011.

\bibitem{top_review}
G.~Ma, M.~Xiao, and C.~T. Chan.
\newblock Topological phases in acoustic and mechanical systems.
\newblock {\em Nat. Rev. Phys.}, 1(4):281--294, 2019.

\bibitem{minnaert}
M.~Minnaert.
\newblock {{O}n musical air-bubbles and the sounds of running water}.
\newblock {\em London, Edinburgh $\&$ Dublin Philosophical Magazine and Journal
  of Science}, 16:235--248, 1933.

\bibitem{pocock2018}
S.~R. Pocock, X.~Xiao, P.~A. Huidobro, and V.~Giannini.
\newblock Topological plasmonic chain with retardation and radiative effects.
\newblock {\em {ACS} Photonics}, 5(6):2271--2279, 2018.

\bibitem{prodan}
E.~Prodan and H.~Schulz-Baldes.
\newblock Non-commutative odd chern numbers and topological phases of
  disordered chiral systems.
\newblock {\em J. Funct. Anal.}, 271(5):1150--1176, 2016.

\bibitem{top10}
S.~Ryu, A.~P. Schnyder, A.~Furusaki, and A.~W.~W. Ludwig.
\newblock Topological insulators and superconductors: tenfold way and
  dimensional hierarchy.
\newblock {\em New J. Phys.}, 12(6):065010, 2010.

\bibitem{top_subwavelength}
A.~P. Slobozhanyuk, A.~N. Poddubny, A.~E. Miroshnichenko, P.~A. Belov, and
  Y.~S. Kivshar.
\newblock Subwavelength topological edge states in optically resonant
  dielectric structures.
\newblock {\em Phys. Rev. Lett.}, 114:123901, Mar 2015.

\bibitem{SSH}
W.~P. Su, J.~R. Schrieffer, and A.~J. Heeger.
\newblock Solitons in polyacetylene.
\newblock {\em Phys. Rev. Lett.}, 42:1698--1701, Jun 1979.

\bibitem{top_subwavelength2}
L.~Wang, R.-Y. Zhang, B.~Hou, Y.~Huang, S.~Li, and W.~Wen.
\newblock Subwavelength topological edge states based on localized spoof
  surface plasmonic metaparticle arrays.
\newblock {\em Opt. Express}, 27(10):14407--14422, May 2019.

\bibitem{Yves2}
S.~Yves, R.~Fleury, T.~Berthelot, M.~Fink, F.~Lemoult, and G.~Lerosey.
\newblock Crystalline metamaterials for topological properties at subwavelength
  scales.
\newblock {\em Nat. Commun.}, 8:16023 EP --, Jul 2017.
\newblock Article.

\bibitem{Yves1}
S.~Yves, R.~Fleury, F.~Lemoult, M.~Fink, and G.~Lerosey.
\newblock Topological acoustic polaritons: robust sound manipulation at the
  subwavelength scale.
\newblock {\em New J. Phys.}, 19(7):075003, 2017.

\bibitem{Yves3}
S.~Yves, F.~Lemoult, M.~Fink, and G.~Lerosey.
\newblock Crystalline soda can metamaterial exhibiting graphene-like dispersion
  at subwavelength scale.
\newblock {\em Sci. Rep.}, 7(1):15359, 2017.

\bibitem{zak}
J.~Zak.
\newblock Berry's phase for energy bands in solids.
\newblock {\em Phys. Rev. Lett.}, 62:2747--2750, Jun 1989.

\bibitem{add1}
X.~Zhang, M.~Xiao, Y.~Cheng, M.-H. Lu, and J.~Christensen.
\newblock Topological sound.
\newblock {\em Communications Physics}, 1:97, 13pp., 2018.

\bibitem{zhao2018}
D.~Zhao, M.~Xiao, C.~Ling, C.~Chan, and K.~H. Fung.
\newblock Topological interface modes in local resonant acoustic systems.
\newblock {\em Phys. Rev. B}, 98(1):014110, 2018.

\bibitem{add4}
L.-Y. Zheng, V.~Achilleos, Z.-G. Chen, O.~Richoux, G.~Theocharis, Y.~Wu,
  J.~Mei, S.~Felix, V.~Tournat, and V.~Pagneux.
\newblock Acoustic graphene network loaded with helmholtz resonators: a
  first-principle modeling, dirac cones, edge and interface waves.
\newblock {\em New J. Phys.}, 22:013029, 11 pp., 2020.

\bibitem{add3}
L.-Y. Zheng, V.~Achilleos, O.~Richoux, G.~Theocharis, and V.~Pagneux.
\newblock Observation of edge waves in a two-dimensional su-schrieffer-heeger
  acoustic network.
\newblock {\em Phys. Rev. Applied}, 12:034014, 6 pp., 2019.

\bibitem{table}
D.~Zwillinger, V.~Moll, I.~Gradshteyn, and I.~Ryzhik.
\newblock {\em Table of Integrals, Series, and Products}.
\newblock Academic Press, Boston, 2014.

\end{thebibliography}
	
\begin{appendices}
	\section{Multipole expansion method in three dimensions} \label{app:multipole}
	Here we derive the multipole expansion approximation of $\S_D^k$ and $\S_D^{\alpha,k}$ in three dimensions. The method is a generalization of the method in two dimensions given in Appendix C of \cite{bandgap}. The overarching principle is that when working on spherical domains, the action of the single layer potential on spherical basis functions has an explicit, analytic representation.
	
	The goal is to discretize the equations \eqnref{eq:boundary_quasi} and \eqnref{eq:boundary}. Observe that the operators $\A$ and $\A^\alpha$ can be written as
	$$
	\A(k,\delta) = \frac{\partial }{\partial \nu} \S_D^k \Big|_{-}- \delta \frac{\partial }{\partial \nu} \S_D^k\Big|_{+},
	$$
	and
	$$
	\A^\alpha(k,\delta) = \frac{\p}{\p\nu}  \mathcal{S}_D^{\alpha,k}\Big|_{-} - \delta \frac{\p}{\p\nu} \mathcal{S}_D^{\alpha,k} \Big|_{+},
	$$
	so it is enough to find a discretized representation of the single layer potentials $\S_D^k$ and $\S_D^{\alpha,k}$. 
	
	For a radially symmetric Helmholtz equation, it is well-known that the spherical waves $j_l(kr)Y_l^m(\theta, \phi)$ and $h_l^{(1)}(kr)Y_l^m(\theta,\phi)$ gives a basis of solutions in the polar coordinates $(r,\theta,\phi)$. Here $Y_l^m(\theta, \phi), l\in \N, m = -l,...,l$, are the spherical harmonics and $j_n(kr), h_n^{(1)}(kr)$ are the spherical Bessel and Hankel functions of the first kind, respectively, defined by 
	$$j_l(x) = \sqrt{\frac{\pi}{2x}}J_{l+\frac{1}{2}}(x), \quad h_l^{(1)}(x) = \sqrt{\frac{\pi}{2x}}H_{l+\frac{1}{2}}^{(1)}(x),$$
	where $J_n$ and $H_n^{(1)}$ are the ordinary Bessel and Hankel functions of the first kind.
	
	We begin by deriving the multipole expansion of the single layer potential $\S_D^k$. The spherical harmonics $Y_l^m$ form a basis of $L^2(\p D)$, and we seek the expansion of $\S_D^k$ in this basis. Define $u:=\mathcal{S}_D^{k}[Y_l^m]$, which is the solution to 
	\begin{equation} %\label{01p-100}
	\left\{
	\begin{array}{ll}
	\ds  \Delta u + k^2 u=0 \quad  &\mbox{in } \mathbb{R}^3\setminus
	\overline{D}, \\
	\nm
	\ds \Delta u + k^2  u=0 \quad  &\mbox{in } D, \\
	\nm
	\ds u|_{+}=u|_{-} \quad &\mbox{on } \p D, \\
	\nm
	\ds  \frac{\p u}{\p\nu} \Big|_{+} -  \frac{\p u}{\p\nu} \Big|_{-} = Y_l^m \quad &\mbox{on } \p D, \\
	\ds |x| \left(\tfrac{\p}{\p|x|}-\iu k\right)u \to 0 
	&\text{as } {|x|} \rightarrow \infty.
%	\nm u \mbox{ satisfies the outgoing} &\mbox{radiation condition}.
	\end{array}
	\right.
	\end{equation}
	The above equation can be easily solved by the separation of variables technique in polar coordinates. It gives
	\begin{equation}
	\label{SingleLayer_multipole}
	\mathcal{S}_D^{k}[Y_l^m](r,\theta,\phi) = 
	\begin{cases}
	\ds cj_l(kR) h_l^{(1)}(kr)Y_l^m(\theta,\phi), &\quad |r|>R,
	\\[0.5em]
	\ds ch_l^{(1)}(kR)j_l(kr)Y_l^m(\theta, \phi), &\quad |r|\leq R,
	\end{cases}
	\end{equation}
	where $c=-\iu  k R^2$.
	
	In order to handle problems posed on disjoint domains, we will need an addition theorem relating spherical waves centred around a translated origin to spherical waves around the original origin.	Suppose we have a point with coordinates $x = (r,\theta,\phi)$ in the original system and $x' = (r',\theta',\phi')$ in the translated system, with the coordinate vectors related by $x = x' + b$ for $b = (r_b,\theta_b,\phi_b)$. Moreover, we assume $r'<r_b$. Then the addition theorem reads \cite{addition}
	\begin{equation}\label{addition_theorem}
	 h_l^{(1)}(kr)Y_{l}^{m}(\theta,\phi) = \sum_{l' \in \N, |m'|\leq l'} A_{l'm'}^{lm}j_{l'}(kr')Y_{l'}^{m'}(\theta',\phi'),
	\end{equation}	
	where the coefficients $A_{l'm'}^{lm}$ are given by 
	$$A_{l'm'}^{lm} = \sum_{\lambda\in \N,|\mu|\leq \lambda} C(l,m,l',m',\lambda,\mu) h_\lambda^{(1)}(kr_b)Y_\lambda^\mu(\theta_b, \phi_b).$$
	Here, the coefficients $C(l,m,l',m',\lambda,\mu)$ are in turn given by
	$$C(l,m,l',m',\lambda,\mu) = i^{l'-l+\lambda}(-1)^{m}\sqrt{4\pi(2l+1)(2l'+1)(2\lambda+1)}
	\begin{pmatrix} l & l' & \lambda \\ 0 & 0 & 0\end{pmatrix}
	\begin{pmatrix} l & l' & \lambda \\ -m & m' & \mu\end{pmatrix},$$
	where we denote by 
	$$\begin{pmatrix} j_1 & j_2 & j_3 \\ m_1 & m_2 & m_3\end{pmatrix},$$
	the Wigner $3j$ symbols. To simplify these expressions slightly, we assume that the original coordinate system is aligned such that $b$ points along the positive $z$-axis, \ie{}, $\theta_b = 0$. In this case 
	$$ Y_\lambda^\mu(\theta_b, \phi_b) = \begin{cases} 0, \qquad &\mu \neq 0, \\ \sqrt{\frac{2\lambda + 1}{4\pi}}, \quad &\mu = 0. \end{cases}$$
	Substituting this into the expression for $A_{l'm'}^{lm}$ gives
	$$A_{l'm'}^{lm} = \sum_{\lambda \in \N}\sqrt{\frac{2\lambda + 1}{4\pi}} C(l,m,l',m',\lambda,0) j_\lambda(kr_b).$$
	
	Now, we compute the quasiperiodic single layer potential $\mathcal{S}_{D}^{\alpha,k}[Y_l^m]$ in the case when $D$ consists of a single resonator. Since 
	$$ %\label{eq:quasi-per-helmholtz-g}
	G^{\alpha,k} (x,y) = \sum_{n \in \mathbb{Z}} G^k(|x-y-(nL,0,0)|)e^{\iu n\alpha L},$$
	we have
	\begin{align*}
	\mathcal{S}_D^{\alpha,k}[Y_l^m](x)
	&=\mathcal{S}_D^{k}[Y_l^m](x)+\sum_{n\in\mathbb{Z}, n\neq 0} \mathcal{S}^{k}_{D+n}[Y_l^m]e^{\iu  n\alpha L}
	\\
	&=\mathcal{S}_D^{k}[Y_l^m](x)+cj_n(kR)\sum_{n\in\mathbb{Z}, n\neq 0} h_l^{(1)}(kr_n')Y_l^m(\theta_n',\phi_n') e^{\iu  n\alpha}.
	\end{align*}
	Here, $D+n$ means a translation of the disk $D$ by $(nL,0,0)$ and $(r_n',\theta_n',\phi_n')$ are the spherical coordinates with respect to the centre of $D+n$.
	
	Using the addition theorem \eqref{addition_theorem} we have
	\begin{align*}
	\mathcal{S}_D^{\alpha,k}[Y_l^m](x)
	=& \mathcal{S}_D^{k}[Y_l^m](x)+cj_l(kR)\sum_{l' \in \N, |m'|\leq l'}\left[ \sum_{\lambda\in \N,|\mu|\leq \lambda}C(l,m,l',m',\lambda,\mu) Q_\lambda^\mu\right] j_{l'}(kr)  Y_{l'}^{m'}(\theta,\phi)\\
	:=&  \mathcal{S}_D^{k}[Y_l^m](x)+cj_l(kR)\sum_{l' \in \N, |m'|\leq l'}B_{l'm'}^{lm} j_{l'}(kr)  Y_{l'}^{m'}(\theta,\phi),
	\end{align*}
	where $Q_\lambda^\mu$ is the one-dimensional lattice sum in three dimensions, defined by
	$$Q_\lambda^\mu = \sum_{n\in\mathbb{Z}, n\neq 0} h_\lambda^{(1)}(knL)Y_\lambda^\mu(\theta_n, \phi_n)e^{\iu  n\alpha L}.$$
	An efficient method for computing this lattice sum is given in \cite{linton}.
	
	We are now ready to compute $\S_D^{\alpha,k}$ in the case when $D\Subset Y$ consists of two resonators, centred at $(-x_1,0,0)$ and $(x_1,0,0)$, respectively. This is what we require in order to perform computations on the infinite chain in \Cref{sec:inf_numerics}. By identifying $L^2(\p D) = L^2(\p D_1) \times L^2(\p D_2)$ we have 
	$$\S_D^{\alpha,k} = \begin{pmatrix} \S_{D_1}^{\alpha,k} & \S_{D_2}^{\alpha,k}\big|_{\p D_1} \\[0.5em] 
	\S_{D_1}^{\alpha,k}\big|_{\p D_2} & \S_{D_2}^{\alpha,k}\end{pmatrix}.$$
	Here the operator $\S_{D_i}^{\alpha,k}\big|_{\p D_j}: L^2(\p D_i) \rightarrow L^2(\p D_j), i,j = 1,2$ is the evaluation of $\S_{D_i}^{\alpha,k}$ on $\p D_j$. To compute the multipole expansion of $\S_{D_1}^{\alpha,k}\big|_{\p D_2}$, we again use the addition theorem. We have
	\begin{align*}
	&\S_{D_1}^{\alpha,k}\big|_{\p D_2}[Y_l^m](x') =  cj_l(kR) h_l^{(1)}(kr')Y_l^m(\theta',\phi')+cj_l(kR)\sum_{l' \in \N, |m'|\leq l'}B_{l'm'}^{lm} j_{l'}(kr')  Y_{l'}^{m'}(\theta',\phi') \\
	&\quad=cj_l(kR) \sum_{l'' \in \N, |m''|\leq l''} \left[ \sum_{\lambda\in \N,|\mu|\leq \lambda} C(l,m,l'',m'',\lambda,\mu) h_\lambda^{(1)}(kd)Y_\lambda^\mu(\theta_d, \phi_d) \right]j_{l''}(kr)Y_{l''}^{m''}(\theta,\phi) \\
	&\quad\qquad  + cj_l(kR)\sum_{l'' \in \N, |m''|\leq l''}\left[\sum_{\substack{l' \in \N, |m'|\leq l' \\ \lambda\in \N,|\mu|\leq \lambda}}B_{l'm'}^{lm}  C(l',m',l'',m'',\lambda,\mu) j_\lambda(kd)Y_\lambda^\mu(\theta_d, \phi_d)\right]j_{l''}(kr)Y_{l''}^{m''}(\theta,\phi).
	\end{align*}
	
	In order to simulate the finite resonator chain in \Cref{sec:finite_numerics}, we must now perform similar computations for the operator $\S_{D}^k$ in the case when $D$ consists of $N$ resonators. We assume the resonators to be arranged collinearly along the $x_1$-axis. By identifying $L^2(\p D) = L^2(\p D_1) \times \ldots\times L^2(\p D_N)$ we have 
	$$\S_D^{k} = \begin{pmatrix} 
	\S_{D_1}^{k} & \S_{D_2}^k\big|_{\p D_1} & \dots & \S_{D_N}^k\big|_{\p D_1}  \\[0.5em]  
	\S_{D_1}^k\big|_{\p D_2} & \S_{D_2}^{k} & \dots & \S_{D_N}^k\big|_{\p D_2}\\
	\vdots & \vdots & \ddots & \vdots \\
	\S_{D_1}^k\big|_{\p D_N} & \S_{D_2}^k\big|_{\p D_N} & \dots & \S_{D_N}^{k}
	\end{pmatrix},$$
	where, as in the quasiperiodic case, $\S_{D_i}^k\big|_{\p D_j}: L^2(\p D_i) \rightarrow L^2(\p D_j)$ is the evaluation of $\S_{D_i}^{k}$ on $\p D_j$. This relies on the addition theorem once again. The diagonal terms are easily evaluated using \eqref{SingleLayer_multipole} directly. Away from the diagonals, the addition theorem \eqref{addition_theorem} gives that
	\begin{equation*}
	\S_{D_j}^k\big|_{\p D_i}[Y_l^m](x') = ch_l^{(1)}(kR)\sum_{l' \in \N, |m'|\leq l'} A_{l'm'}^{lm}j_{l'}(kr')Y_{l'}^{m'}(\theta',\phi').
	\end{equation*}
	
	\section{Proof of \Cref{lem:c=0}} \label{app:lemma}
	\noindent \textbf{Part (i): $\text{Im }C_{12}^\alpha > 0$ for $0<\alpha<\pi/L$ and $\text{Im }C_{12}^\alpha < 0$ for $-\pi/L<\alpha<0$. In particular, $\text{Im }{C_{12}^{\alpha}}$ is zero if and only if $\alpha \in\{ 0, \pi/L \}$.}
\smallskip

\noindent
Recall, from  \Cref{lem:cap_estim_quasi}, the following expansion of ${C_{12}^{\pi/L}}$ for fixed $\alpha \neq 0$ in the dilute regime:
\begin{align}C_{12}^\alpha =& -\frac{(\epsilon \mathrm{Cap}_B)^2}{4\pi}\sum_{m =-\infty}^\infty \frac{e^{\iu m \alpha L} }{  |d + mL| } + O(\epsilon^3) \nonumber \\
:=& -\frac{(\epsilon \mathrm{Cap}_B)^2}{4\pi L} f(\alpha,d) + O(\epsilon^3), \label{eq:c12d}
\end{align}
where, in order to simplify notation, we have defined the function $f$ as
\begin{equation} \label{eq:f}
f(\alpha,d) := \sum_{m =-\infty}^\infty \frac{e^{\iu m \alpha L} }{  |d/L + m| }.
\end{equation}
The imaginary part 
\begin{equation*}
\text{Im }f(\alpha,d) = \sum_{m =-\infty}^\infty \frac{\sin(m\alpha L) }{  |d/L + m| }
\end{equation*}
converges for all $\alpha \in Y^*$, and \eqnref{eq:c12d} is valid for imaginary parts also at $\alpha = 0$.

We will express $f$ in terms of Lerch's Transcendent function $\Phi(z,s,a)$, after having first reviewed some basic properties. For details we refer to \cite{bateman1953higher}. $\Phi(z,s,a)$ is defined by the power series
\begin{equation}\label{eq:phi}
\Phi(z,s,a) = \sum_{m=0}^\infty \frac{z^m}{(a+m)^s},
\end{equation}
for $z\in \mathbb{C}$ where this series converges, and extended by analytic continuation. If $\text{Re}(s) > 0, \text{Re}(a) > 0$ and $z\in \mathbb{C}\setminus [1,\infty)$ we have the integral representation
\begin{equation}\label{eq:phi_int}
\Phi(z,s,a) = \frac{1}{\Gamma(s)}\int_0^\infty \frac{t^{s-1}e^{-at}}{1-ze^{-t}}\dx t,
\end{equation}
where $\Gamma$ is the Gamma function.

Now, from the definition of $f$ in \eqnref{eq:f}, we have 
\begin{align*}f(\alpha,d) = \Phi(e^{\iu \alpha L},1,d/L) + e^{-\iu \alpha L}\Phi(e^{-\iu \alpha L},1,1-d/L),
\end{align*}
Using the integral representation \eqnref{eq:phi_int} we get, after simplifications,
\begin{equation} \label{eq:fint}
f(\alpha, d) = \int_0^\infty \frac{e^{-\iu \alpha L}\sinh\left(\frac{d}{L}t\right) + \sinh\left(\frac{1-d}{L}t\right)}{\cosh(t) - \cos(\alpha L)}\dx t.
\end{equation}
The imaginary part satisfies
\begin{equation} \label{eq:Imf}
\text{Im } f(\alpha, d) = \sin(\alpha L)\int_0^\infty \frac{\sinh\left(\frac{d}{L}t\right)}{\cos(\alpha L) - \cosh(t)}\dx t.
\end{equation}

At the points $\alpha = 0$ and $\alpha = \pi/L$, the functions $V_1^\alpha$ and $V_2^\alpha$ are real-valued and hence $\text{Im } C_{12}^\alpha = 0$. We will show that, for $\epsilon$ small enough, this imaginary part is zero precisely when $\alpha \in\{ 0, \pi/L \}$. The integrand in \eqnref{eq:Imf} is positive, and hence $\text{Im } f(\alpha, d) = 0$ if and only if $\alpha \in\{ 0, \pi/L \}$ for $\alpha \in Y^* = (-\pi/L, \pi/L]$. This shows that the leading order term of $\text{Im } C_{12}^\alpha$ is zero precisely when $\alpha \in\{ 0, \pi/L \}$. Moreover, we can easily verify from \eqnref{eq:Imf} that
$$
\frac{\p}{\p\alpha}\text{Im } f(\alpha,d)\big|_{\alpha = 0} \neq 0, \qquad \frac{\p}{\p\alpha}\text{Im } f(\alpha,d)\big|_{\alpha = \pi/L^-} \neq 0, \qquad \frac{\p}{\p\alpha}\text{Im } f(\alpha,d)\big|_{\alpha = -\pi/L^+} \neq 0.
$$
This shows that for small enough $\epsilon$, the function $\text{Im }C_{12}^\alpha$ will be monotonic around $\alpha = 0$ and $\alpha = \pi/L$. Since we know that $\alpha \in\{ 0, \pi/L \}$ are exact zeros of $\text{Im }C_{12}^\alpha,$ these zeros will be isolated for small enough $\epsilon$. It follows that,  for small enough $\epsilon$, $\text{Im }C_{12}^\alpha$ is zero precisely when $\alpha \in\{ 0, \pi/L \}$. Then, from \eqnref{eq:Imf} it follows that $\text{Im }C_{12}^\alpha > 0$ for $0<\alpha<\pi/L$ and $\text{Im }C_{12}^\alpha < 0$ for $-\pi/L<\alpha<0$.

%At $\alpha = 0$, it is well-known that the quasi-periodic resonance problem \ref{eq:scattering_quasi} has an eigenvalue $\omega_1^0 = 0$ \cite{MaCMiPaP}. It can be verified that the corresponding Bloch eigenmode is $u_1^0(x) = S_1^0(x) + S_2^0(x)$. By \Cref{lem:evec}, it follows that $\theta_\alpha = \pi$, so $C_{12}^0$ is non-zero and negative.

%It is well-known that the first and second Bloch resonances $\omega_1^\alpha$ and $\omega_2^\alpha$ are, respectively, increasing and decreasing for $\alpha\in [0,\pi/L]$ \cite{insert citation}. It follows that any degeneracy $\omega_1^\alpha = \omega_2^\alpha$ can only occur at $\alpha = \pi/L$. Taking the limit $\delta \rightarrow 0$ we conclude that $C^\alpha$ can only have a double eigenvalue at $\alpha = \pi/L$. Since $C^\alpha$ is Hermitian, ${C_{12}^{\alpha}} = 0$ can only occur at $\alpha = \pi/L$.

\smallskip

\noindent \textbf{Part (ii): $C_{12}^{\alpha}$ is zero if and only if both $d = d'$ and $\alpha = \pi/L$.}
\smallskip

\noindent
By part (i), any zero must satisfy $\alpha = 0$ or $\alpha = \pi/L$. We begin by excluding the case $\alpha = 0$. As $\alpha \rightarrow 0$, it is known that the quasiperiodic capacitance of a single particle vanishes \cite{MaCMiPaP,bandgap}. In other words, we have, for the total capacitance of $D$,
\begin{align*}
0 & = \int_{Y\setminus D} \nabla(V_1^0 + V_2^0)\cdot\overline{\nabla(V_1^0+V_2^0)} \dx x \\
&= 2(C_{11}^0+C_{12}^0),
\end{align*}
where the last equality follows since $C_{12}^0$ is real. Since $C_{11}^\alpha >0$ we have $C_{12}^0 < 0$.

We now turn to the case $\alpha = \pi/L$. We already know from \Cref{lem:cc'} that ${C_{12}^{\pi/L}} = 0$ if $d=d'$. To show that this is the only zero of ${C_{12}^{\pi/L}}$, we will show that ${C_{12}^{\pi/L}}$ is strictly monotonic as a function of $d<L$. From the definition of $f$ we compute the derivative
$$\frac{\p}{\p d}f(\alpha,d)\bigg|_{\alpha=\pi/L} = -\frac{L}{d^2} + \sum_{m =1}^\infty \frac{(-1)^m}{L}\left[\frac{1}{(m-d/L)^2}-\frac{1}{(m+d/L)^2} \right].$$
The sum is an alternating series, with decreasing terms and negative first term. Hence it converges to a negative value, and by \eqnref{eq:c12d} we have   
\begin{equation}
\label{eq:deri}
\frac{\p}{\p d} {C_{12}^{\pi/L}} > 0,
\end{equation}
for $\epsilon$ small enough. This shows that ${C_{12}^{\pi/L}}$ has a unique zero when $d=d'$, which completes the proof of part (ii).
\smallskip

\noindent \textbf{Part (iii): $C_{12}^{\pi/L} < 0$  when $d<d'$ and $C_{12}^{\pi/L} > 0$ when $d>d'$. In both cases we have $C_{12}^{0} < 0$.}
\smallskip

\noindent We already know, from the proof of part (ii), that ${C_{12}^{0}} < 0$. The other conclusions, namely that ${C_{12}^{\pi/L}} < 0$ if $d<d'$ and ${C_{12}^{\pi/L}} > 0$ if $d>d'$, follow directly from \eqnref{eq:deri}.
\qed

\section{Proof of \Cref{thm:band_gap}} \label{app:lemma_gap}
We begin by proving the following lemma.
\begin{lem} \label{lem:ineq}
	For every $a,b$ such that $-1\leq a<1$ and $0<b<1$, the following holds:
	\begin{equation} \label{eq:ineq}
	\int_0^\infty \frac{\sinh(bt)}{\cosh(t)-a}\dx t > -\frac{2b}{1+a}\log\left(\frac{1}{2}(1-a)\right).
	\end{equation}
\end{lem}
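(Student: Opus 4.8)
The plan is to recognise the right-hand side of \eqref{eq:ineq} as itself an integral of the same type against the weight $1/(\cosh t - a)$, and then reduce the inequality to a clean pointwise bound on the integrand. The creative step is the identity
\[
\int_0^\infty \frac{2b\tanh(t/2)}{\cosh t - a}\dx t = -\frac{2b}{1+a}\log\!\left(\tfrac12(1-a)\right),
\]
which I would verify using $2\tanh(t/2) = \tfrac{2\sinh t}{\cosh t + 1}$ together with the partial-fraction identity $\tfrac{1}{(\cosh t - a)(\cosh t + 1)} = \tfrac{1}{1+a}\big(\tfrac{1}{\cosh t - a} - \tfrac{1}{\cosh t + 1}\big)$. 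Indeed, an antiderivative of $\sinh t\big(\tfrac{1}{\cosh t - a} - \tfrac{1}{\cosh t + 1}\big)$ is $\log\tfrac{\cosh t - a}{\cosh t + 1}$, which tends to $0$ as $t\to\infty$ and equals $\log\tfrac{1-a}{2}$ at $t=0$; multiplying by $\tfrac{2b}{1+a}$ gives the displayed formula.

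Granting this identity, the inequality \eqref{eq:ineq} becomes equivalent to
\[
\int_0^\infty \frac{\sinh(bt) - 2b\tanh(t/2)}{\cosh t - a}\dx t > 0,
\]
where both constituent integrals converge separately since $0<b<1$ (the integrands decay like $e^{(b-1)t}$ and $e^{-t}$, respectively). Because $a<1\le\cosh t$, the denominator $\cosh t - a$ is strictly positive, so it suffices to establish the pointwise inequality $\psi(t) := \sinh(bt) - 2b\tanh(t/2) > 0$ for every $t>0$.

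I would prove this pointwise bound by differentiating twice. One has $\psi(0)=0$ and
\[
\psi'(t) = b\cosh(bt) - \frac{b}{\cosh^2(t/2)}, \qquad \psi''(t) = b^2\sinh(bt) + \frac{b\sinh(t/2)}{\cosh^3(t/2)}.
\]
From the first formula $\psi'(0)=b-b=0$, and from the second $\psi''(t)>0$ for all $t>0$. Hence $\psi'$ is strictly increasing from the value $0$, so $\psi'>0$ on $(0,\infty)$; consequently $\psi$ is strictly increasing from $\psi(0)=0$, giving $\psi>0$ there. Since $\psi>0$ on a set of positive measure and $1/(\cosh t - a)>0$, the integral is strictly positive, which is exactly the claim. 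The endpoint $a=-1$ (where the stated closed form is read as its finite limit $b$, as one checks directly by integrating $2b\tanh(t/2)/(\cosh t+1)$) is covered by the identical argument with $\cosh t + 1$ in place of $\cosh t - a$.

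The main obstacle is purely the first step: spotting that the elementary-looking quantity $-\tfrac{2b}{1+a}\log(\tfrac12(1-a))$ is precisely $\int_0^\infty \tfrac{2b\tanh(t/2)}{\cosh t - a}\dx t$. Once both sides are expressed as integrals against the common positive weight $1/(\cosh t - a)$, the residual inequality $\sinh(bt)\ge 2b\tanh(t/2)$ is a routine calculus fact, and one avoids any delicate estimation of the conditionally-structured series representation of the integral.
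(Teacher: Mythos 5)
Your proof is correct, and it takes a genuinely different route from the paper's. Both arguments begin by rewriting the constant $-\tfrac{2b}{1+a}\log\bigl(\tfrac12(1-a)\bigr)$ as an integral, but the paper uses the representation
$\int_0^\infty b\sinh(t)\,\bigl[\cosh(t)\bigl(\tfrac{1-a}{2}\cosh(t)+\tfrac{1+a}{2}\bigr)\bigr]^{-1}\dx t$,
whose weight differs from $1/(\cosh t - a)$; as a consequence the pointwise comparison of integrands only goes through for $a>0$, and the paper must treat $a\le 0$ by a separate, cruder min--max argument combined with the tabulated value $\int_0^\infty t/(\cosh t+1)\dx t = 2\log 2$. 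Your representation $\int_0^\infty 2b\tanh(t/2)/(\cosh t - a)\dx t$ uses the \emph{same} weight as the left-hand side, so the whole inequality collapses to the single pointwise bound $\sinh(bt) > 2b\tanh(t/2)$ for $t>0$, which your convexity argument ($\psi(0)=\psi'(0)=0$, $\psi''>0$) establishes cleanly and uniformly in $a$. This eliminates the case split entirely and is arguably the more transparent proof; the only cost is the need to interpret the right-hand side at the endpoint $a=-1$ as its finite limit $b$ (the closed form is formally $0/0$ there), which you correctly address, whereas the paper's $a\le 0$ branch sidesteps this by bounding the two sides separately. Both identities are verified the same way, by exhibiting an explicit antiderivative, so neither approach requires more machinery than the other.
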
 
\begin{proof}
	We will split into the cases $a>0$ and $a\leq 0$, and begin with $a>0$. The right-hand side can be written as follows:
	\begin{equation} \label{eq:log}
	-\frac{2b}{1+a}\log\left(\frac{1}{2}(1-a)\right) = \int_0^\infty \frac{b\sinh(t)}{\cosh(t)(\frac{1-a}{2}\cosh(t)+\frac{1+a}{2})} \dx t.
	\end{equation}
	Indeed, the integrand has a primitive function
	$$\frac{2b}{1+a}\log\left(\frac{\cosh(t)}{\cosh(t)+\frac{1+a}{1-a}}\right),$$
	which shows \eqnref{eq:log}. Then we have, for $a>0$
	\begin{align*}
	\int_0^\infty \frac{\sinh(bt)}{\cosh(t)-a}\dx t + \frac{2b}{1+a}\log\left(\frac{1}{2}(1-a)\right) &= \int_0^\infty\left( \frac{\sinh(bt)}{\cosh(t)-a} - \frac{b\sinh(t)}{\cosh(t)(\frac{1-a}{2}\cosh(t)+\frac{1+a}{2})}\right)\dx t \\
	&> \int_0^\infty\frac{1}{\cosh(t)} \left(\sinh(bt) - \frac{b\sinh(t)}{\frac{1-a}{2}\cosh(t)+\frac{1+a}{2}}\right)\dx t \\
	&> 0,
	\end{align*}
	where the last step follows because $\sinh(bt) - \frac{b\sinh(t)}{\frac{1-a}{2}\cosh(t)+\frac{1+a}{2}}>0$ for all $t>0$ in the case $a>0$. This proves the lemma in the case $a>0$. We now turn to the case $a\leq0$. It is easy to see that for every $b$,
	\begin{equation}\label{eq:minmax}
	\min_{-1\leq a \leq 0}\int_0^\infty \frac{\sinh(bt)}{\cosh(t)-a}\dx t = \int_0^\infty \frac{\sinh(bt)}{\cosh(t) + 1}\dx t, \qquad \max_{-1\leq a \leq 0} -\frac{2b}{1+a}\log\left(\frac{1}{2}(1-a)\right) = 2b\log\left(2\right).
	\end{equation}
	Moreover, we have 
	$$\int_0^\infty \frac{\sinh(bt)}{\cosh(t) + 1}\dx t > \int_0^\infty \frac{bt}{\cosh(t) + 1}\dx t = 2b\log(2),$$
	where we have used a known value for the integral (for example found in \cite{table}). Together with \eqnref{eq:minmax}, this proves the lemma in the case $a\leq0$.
\end{proof} 

\noindent \textit{Proof of Theorem \ref{thm:band_gap}.} We will show that there is a frequency $\omega_0$ such that 
$$\max_{|\alpha|>\alpha_0} \omega_1^\alpha < \omega_0 < \min_{|\alpha|>\alpha_0} \omega_2^\alpha.$$
For sufficiently small $\delta$, by \Cref{thm:char_approx_infinite} and \Cref{lem:evec}, it is enough to show that there is a constant $C_0$ such that 
\begin{equation} \label{eq:Cgap}
\max_{|\alpha|>\alpha_0} C_{11}^\alpha - |C_{12}^\alpha| < C_0 < \min_{|\alpha|>\alpha_0} C_{11}^\alpha + |C_{12}^\alpha|.
\end{equation}
Define $C_0$ as
$$C_0 = \epsilon \text{Cap}_B - \frac{(\epsilon \text{Cap}_B)^2}{4\pi L}\sum_{m\neq 0} \frac{(-1)^m}{|m|},$$
that is, $C_0$ is defined as the leading order of the eigenvalues of $C^\alpha$ at the degenerate point $d=d', \alpha = \pi/L$. The sum appearing in the expansion of $C_{11}^\alpha$ can be explicitly computed as
$$\sum_{m\neq 0} \frac{e^{\iu m\alpha L}}{|m|}=-\log\big(2-2\cos(\alpha L)\big).$$
Then we have 
\begin{align*}
C_{11}^\alpha - |C_{12}^\alpha| - C_0 &= \frac{(\epsilon \text{Cap}_B)^2}{4\pi L} \log\left(\frac{1}{2}\big(1-\cos(\alpha L)\big)\right) - |C_{12}^\alpha| +O(\epsilon^3) \\
&< 0,
\end{align*}
for $\epsilon$ small enough. This shows that, for sufficiently small $\epsilon$,
$$\max_{|\alpha|>\alpha_0} C_{11}^\alpha - |C_{12}^\alpha| < C_0.$$
We now turn to the second inequality of \eqnref{eq:Cgap}. By \eqnref{eq:c12d} and \eqnref{eq:fint} we have
\begin{align} \label{eq:intermediate}
C_{11}^\alpha + |C_{12}^\alpha| - C_0 &= \frac{(\epsilon \text{Cap}_B)^2}{4\pi L} \left( \log\left(\frac{1}{2}\big(1-\cos(\alpha L)\big)\right) + \left|\int_0^\infty\frac{e^{\iu \alpha L}\sinh\left(\frac{d}{L}t\right) + \sinh\left(\frac{1-d}{L}t\right)}{\cosh(t) - \cos(\alpha L)}\dx t\right|\right)  +O(\epsilon^3).
\end{align}
Recall that $0<d<L$, so we can apply \eqnref{eq:ineq} with $b= d/L$ or with $b=(1-d)/L$ and with $a=\cos(\alpha L)$. Expanding the absolute value and applying \eqnref{eq:ineq}, we find after simplifications that
\begin{align*}
\left|\int_0^\infty\frac{e^{\iu \alpha L}\sinh\left(\frac{d}{L}t\right) + \sinh\left(\frac{1-d}{L}t\right)}{\cosh(t) - \cos(\alpha L)}\dx t\right|^2 &> \log\left(\frac{1}{2}\big(1-\cos(\alpha L)\big)\right)^2 \frac{4\left(d^2 + 2d(1-d)\cos(\alpha L) + (1-d)^2 \right)}{L^2\big(1+\cos(\alpha L)\big)^2}\\
& > \log\left(\frac{1}{2}\big(1-\cos(\alpha L)\big)\right)^2.
\end{align*}
Together with \eqnref{eq:intermediate}, this shows that 
$$\min_{|\alpha|>\alpha_0} C_{11}^\alpha + |C_{12}^\alpha| > C_0,$$
for $\epsilon$ small enough. We have thus proved \eqnref{eq:Cgap}, from which the theorem follows.
\qed
\end{appendices}
\end{document}